% !TEX encoding = UTF-8 Unicode
\documentclass[11pt,a4paper]{amsart}
\usepackage{amssymb,amsmath,epsfig,graphics,mathrsfs}

%%%%%%%%%%%%%%%%%%%%%%%%%%%%%
\usepackage{fancyhdr}
\pagestyle{fancy}
\fancyhead[RO,LE]{\small\thepage}
\fancyhead[LO]{\small \emph{\nouppercase{\rightmark}}}
\fancyhead[RE]{\small \emph{\nouppercase{\rightmark}}}
\fancyfoot[L,R,C]{}

% If you don't like the horizontal line beneath the page
% number and the section's title, change the above
% \renewcommand{\headrulewidth}{1pt}  to
% \renewcommand{\headrulewidth}{0pt}
% 0pt tells Latex not to draw a line.

%\usepackage{amssymb,amsmath}

\usepackage{hyperref}
\hypersetup{
%Colours links instead of ugly Hereafter
 colorlinks   = true,
%Colour for external hyperlinks
 urlcolor     = blue,
%Colour of internal links
 linkcolor    = blue,
%Colour of citations
 citecolor   = red ,
%Generate book marks
 bookmarksopen=true
}

%%%%%%%%%%%%%%%%%%%%%%%%%%%%%

%%%%%%%%%%%%%%%%%%%%%%%%%%%%%%
%\documentclass[14pt]{amsart}
\usepackage{amsmath}
\usepackage{amsfonts}
\usepackage{amssymb}
\usepackage{amsthm}
\usepackage{epsfig,graphics,mathrsfs}
\usepackage{graphicx}
\usepackage{dsfont}

\usepackage[usenames, dvipsnames]{color} 

\usepackage{hyperref}

 \textwidth = 16.1cm
  \textheight = 19.63cm

 \hoffset = -1.6cm

\newcommand*\MSC[1][1991]{\par\leavevmode\hbox{%
\textit{#1 Mathematical subject classification:\ }}}
\newcommand\blfootnote[1]{%
  \begingroup
  \renewcommand\thefootnote{}\footnote{#1}%
  \addtocounter{footnote}{-1}%
  \endgroup
}

\def \phi {\varphi}

\def \RN {\mathbb{R}^N}
\def \R {\mathbb{R}}

\def \G{\Gamma}

\def \vf{\varphi}

\newcommand{\Bps}{\mathfrak B_{s,p}(\bG)}

%%%%%%%%%

\newcommand{\Rn}{\mathbb R^n}
\newcommand{\Rm}{\mathbb R^m}
\newcommand{\Om}{\Omega}

\newcommand{\p}{\partial}

\newcommand{\bG}{\mathbb{G}}
\newcommand{\bg}{\mathfrak g}

\newcommand{\la}{\lambda}

\numberwithin{equation}{section}

\newcommand{\beq}{\begin{equation}}
\newcommand{\bea}[1]{\begin{array}{#1} }
\newcommand{\eeq}{ \end{equation}}
\newcommand{\ea}{ \end{array}}

\newcommand{\ve}{\varepsilon}

\newcommand{\In}{1_E}
\newcommand{\Lp}{L^p}

\newcommand{\nh}{\nabla_H}

%%%%%%%%%%%%%%%

%%%%%%%%%%%%%%%%%%%%%%%%%%%%%%%%%%
%%%%%%%
%%%%%%%%%%%%%%%%%%%%%%%%%%%%%%%%%%%%

%\def \tr{\mathrm{tr}}
%%%%%%%%%%%%%%%%%%%%%%%%%%%%%%%%%%%%%%%%%%%

\newtheorem{theorem}{Theorem}[section]
\newtheorem{lemma}[theorem]{Lemma}
\newtheorem{proposition}[theorem]{Proposition}

\newtheorem{definition}[theorem]{Definition}

\numberwithin{equation}{section}
\setcounter{tocdepth}{1}

\begin{document}

\title[A universal heat semigroup characterisation, etc.]{A universal heat semigroup characterisation\\of Sobolev and BV spaces in Carnot groups}

\blfootnote{\MSC[2020]{35K08, 46E35, 53C17}}
\keywords{Sub-Riemannian heat kernels, Integral decoupling, Folland-Stein and BV spaces}

\date{}

\begin{abstract}
In sub-Riemannian geometry there exist, in general, no known explicit representations of the heat kernels, and these functions fail to have any symmetry whatsoever. In particular, they are not a function of the control distance, nor they are for instance spherically symmetric in any of the layers of the Lie algebra. Despite these unfavourable aspects, in this paper we establish a new heat semigroup characterisation of the Sobolev and $BV$ spaces in a Carnot group by means of an integral decoupling property of the heat kernel.       
\end{abstract}

\author{Nicola Garofalo}

\address{Dipartimento d'Ingegneria Civile e Ambientale (DICEA)\\ Universit\`a di Padova\\ Via Marzolo, 9 - 35131 Padova,  Italy}
\vskip 0.2in
\email{nicola.garofalo@unipd.it}

\author{Giulio Tralli}
\address{Dipartimento d'Ingegneria Civile e Ambientale (DICEA)\\ Universit\`a di Padova\\ Via Marzolo, 9 - 35131 Padova,  Italy}
\vskip 0.2in
\email{giulio.tralli@unipd.it}

\maketitle

\tableofcontents

\section{Introduction}\label{S:intro}

For $1\le p < \infty$ and $0<s<1$ consider in $\Rn$ the Banach space $W^{s,p}$ of functions $f\in \Lp$ with finite Aronszajn-Gagliardo-Slobedetzky seminorm, 
\begin{equation}\label{ags}
[f]^p_{s,p} = \int_{\Rn} \int_{\Rn} \frac{|f(x) - f(y)|^p}{|x-y|^{n+ps}} dx dy,
\end{equation}
see e.g. \cite{Ad, RS}. In their celebrated works \cite{BBM1, BBM2, B}, Bourgain, Brezis and Mironescu discovered a new characterisation of the spaces $W^{1,p}$ and $BV$ based on the study of the limiting behaviour of the spaces $W^{s,p}$ as $s\nearrow 1$. To state their result, consider a one-parameter family of functions $\{\rho_\ve\}_{\ve>0}\in L^1_{loc}(0,\infty)$, $\rho_\ve\geq 0$, satisfying the following assumptions
\begin{equation}\label{condbbm}
\int_0^\infty \rho_\ve(r)r^{n-1}dr=1,\quad\underset{\ve \to 0^+}{\lim}\int_\delta^\infty \rho_\ve(r)r^{n-1}dr = 0\ \ \mbox{for every $\delta>0$},
\end{equation}
see \cite[(9)-(11)]{B}. Also, for $1\le p<\infty$ let
\[
K_{p,n}=\int_{\mathbb S^{n-1}} |\langle \omega,e_n\rangle|^p d\sigma(\omega).
\]

\vskip 0.3cm
 
\noindent \textbf{Theorem A.} [Bourgain, Brezis \& Mironescu]\label{T:bbm}\  
\emph{
Assume $1\le p <\infty$. Let $f\in L^p(\Rn)$ and suppose that 
$$
\underset{\ve\to 0^+}{\liminf} \int_{\Rn}\int_{\Rn} \frac{|f(x)-f(y)|^p}{|x-y|^p}\rho_\ve(|x-y|) dydx < \infty.
$$
If $p>1$, then $f\in W^{1,p}$ and
\begin{equation}\label{thesisp}
\underset{\ve \to 0^+}{\lim} \int_{\Rn}\int_{\Rn} \frac{|f(x)-f(y)|^p}{|x-y|^p}\rho_\ve(|x-y|) dydx= K_{p,n} \int_{\Rn} |\nabla f(x)|^p dx.
\end{equation}
If instead $p=1$, then $f\in BV$ and
\begin{equation}\label{thesis1}
\underset{\ve \to 0^+}{\lim} \int_{\Rn}\int_{\Rn} \frac{|f(x)-f(y)|}{|x-y|}\rho_\ve(|x-y|) dydx= K_{1,n} \operatorname{Var}(f).
\end{equation}}

In \eqref{thesis1} we have denoted with $\operatorname{Var}(f)$ the total variation of $f$ in the sense of De Giorgi (when $f\in W^{1,1}$ one has $\operatorname{Var}(f) = \int_{\Rn} |\nabla f(x)| dx$). We also remark that for $n\ge 2$ the equality \eqref{thesis1} was proved by D\'avila in \cite{Da}. From Theorem \hyperref[T:bbm]{A} one immediately obtains the limiting behaviour of the seminorms \eqref{ags}. To see this, it is enough for $0<s<1$ to let $\ve=1-s$ and take 
$$
\rho_{1-s}(r)=\begin{cases}
\frac{(1-s)p}{r^{n-(1-s)p}}, \qquad\,\,\,\,\,\, \ 0<r< 1,  \\
0 \qquad\quad\quad\quad\ \ \ \ \,\, \ r\geq 1.
\end{cases}
$$
It is easy to see that \eqref{condbbm} are satisfied and that \eqref{thesisp} gives in such case
\begin{equation}\label{caso1}
\underset{s \to 1^-}{\lim} (1-s)p \int_{\Rn}\int_{\Rn} \frac{|f(x)-f(y)|^p}{|x-y|^{n+sp}} dydx= K_{p,n} ||\nabla f||^p_p.
\end{equation}
From \eqref{caso1}, and from the identity
\begin{equation}\label{Kappa}
K_{p,n}=2\pi^{\frac{n-1}{2}}\frac{\G\left(\frac{p+1}{2}\right)}{\G\left(\frac{n+p}{2}\right)},
\end{equation} 
one concludes that
\begin{equation}\label{seminorm}
\underset{s \to 1^-}{\lim} (1-s)\int_{\Rn}\int_{\Rn} \frac{|f(x)-f(y)|^p}{|x-y|^{n+sp}} dydx= 2\pi^{\frac{n-1}{2}}\frac{\G\left(\frac{p+1}{2}\right)}{p\G\left(\frac{n+p}{2}\right)} ||\nabla f||^p_p.
\end{equation}

To introduce the results in this paper we now emphasise a different perspective on Theorem \hyperref[T:bbm]{A}. If, in fact, we take $\rho_\ve=\rho_{t}$, with
\begin{equation}\label{rho}
\rho_{t}(r)= \frac{\pi^{\frac{n}{2}}}{2^{p-1} \G\left(\frac{n+p}{2}\right)} \frac{r^{p}}{t^{\frac{p}{2}}}\frac{e^{-\frac{r^2}{4t}}}{(4\pi t)^{\frac{n}{2}}},
\end{equation}
then it is easy to see that also such $\rho_t$ satisfies \eqref{condbbm}. Furthermore, with this choice we can write for $1\le p < \infty$
\begin{align*}
& \int_{\Rn}\int_{\Rn} \frac{|f(x)-f(y)|^p}{|x-y|^p}\rho_\ve(|x-y|) dydx = \frac{\pi^{\frac{n}{2}}}{2^{p-1} \G\left(\frac{n+p}{2}\right)} \frac{1}{t^{\frac{p}{2}}}\int_{\Rn} P_t(|f-f(x)|^p)(x) dx, 
\end{align*}
where we have denoted by $P_t f(x) = (4\pi t)^{-\frac{n}{2}}\int_{\Rn} e^{-\frac{|x-y|^2}{4t}} f(y) dy$ the heat semigroup in $\Rn$. If we combine this observation with \eqref{Kappa} and with Legendre duplication formula for the gamma function (see \cite[p.3]{Le}), which gives
$2^{p-1} \G(p/2) \G\left(\frac{p+1}{2}\right) = \sqrt \pi \G(p),
$ we obtain the following notable consequence of Theorem \hyperref[T:bbm]{A}.

\vskip 0.3cm

\noindent \textbf{Theorem B.}\label{C:bbm}\  
\emph{
Assume $1\le p <\infty$. Let $f\in L^p(\Rn)$ and suppose that  
$$
\underset{t\to 0^+}{\liminf} \frac{1}{t^{\frac{p}{2}}}\int_{\Rn} P_t(|f-f(x)|^p)(x) dx < \infty.
$$
If $p>1$, then $f\in W^{1,p}$ and
\begin{equation}\label{thesispPtk}
\underset{t \to 0^+}{\lim} \frac{1}{t^{\frac{p}{2}}}\int_{\Rn} P_t(|f-f(x)|^p)(x) dx = \frac{2 \G(p)}{\G(p/2)} \int_{\Rn} |\nabla f(x)|^p dx.
\end{equation}
If instead $p=1$, then $f\in BV$ and
\begin{equation}\label{thesis11}
\underset{t \to 0^+}{\lim} \frac{1}{\sqrt{t}}\int_{\Rn} P_t(|f-f(x)|)(x) dx= \frac{2}{\sqrt \pi} \operatorname{Var}(f).
\end{equation}}
One remarkable aspect of \eqref{thesispPtk}, \eqref{thesis11} is the dimensionless constant $\frac{2 \G(p)}{\G(p/2)}$ in the right-hand side. 

For the purpose of the present work it is important for the reader to keep in mind that, while we have presented Theorem \hyperref[T:bbm]{B} as a consequence of Theorem \hyperref[T:bbm]{A}, we could have derived the dimensionless heat semigroup characterisations \eqref{thesispPtk}, \eqref{thesis11} of $W^{1,p}$ and $BV$  completely independently of Theorem \hyperref[T:bbm]{A}. In fact, once Theorem \hyperref[T:bbm]{B} is independently proved, one can go full circle and easily obtain from it a dimensionless heat semigroup version of the characterisation  \eqref{seminorm}. Such a perspective, which is close in spirit to M. Ledoux' approach to the isoperimetric inequality in \cite{Led}, represents the starting point of our work, to whose description we now turn.

One of the main objectives of the present paper is to establish, independently of a result such as Theorem \hyperref[T:bbm]{A}, a surprising generalisation of Theorem \hyperref[T:bbm]{B} that we state as Theorems \ref{T:mainp} and \ref{T:p1} below. To provide the reader with a perspective on our results we note that if, as we have done above, one looks at Theorem \hyperref[T:bbm]{B} as a corollary of Theorem \hyperref[T:bbm]{A}, then the spherical symmetry of the approximate identities $\rho_\ve(|x-y|)$, and therefore of the Euclidean heat kernel in \eqref{rho}, seems to play a crucial role in the dimensionless characterisations \eqref{thesispPtk} and \eqref{thesis11}. With this comment in mind, we mention there has been considerable effort in recent years in extending Theorem \hyperref[T:bbm]{A} to various non-Euclidean settings, see \cite{Bar, Lud, CLL, FMPPS, KM, CMSV, Go, CDPP, ArB, HP} for a list, far from being exhaustive, of some of the interesting papers in the subject. In these works the approach is similar to that in the Euclidean setting, and this is reflected in the fact that the relevant approximate identities $\rho_\ve$ either depend on a distance $d(x,y)$, or are asymptotically close in small scales to the well-understood symmetric scenario of $\Rn$.

The point of view of our work is different since, as we have already said, our initial motivation was to understand a result such as Theorem \hyperref[T:bbm]{B} completely independently from Theorem \hyperref[T:bbm]{A}. In this endevor, one immediately runs into the following potentially serious obstruction.

\medskip

\noindent \textbf{Problem:} \emph{Are universal characterisations such as \eqref{thesispPtk} and \eqref{thesis11} even possible in a genuinely non-Riemannian ambient, when the spherical symmetry, or any other symmetries, of the heat kernel are completely lost?} 

\medskip

Concerning this problem a testing ground of basic interest is, for the reasons that we explain below, that of a connected, simply connected Lie group $\bG$ whose Lie algebra admits a stratification $\bg=\bg_1 \oplus \cdots \oplus \bg_r$ which is $r$-nilpotent, i.e.,
$[\bg_1,\bg_j] = \bg_{j+1},$ $j = 1,...,r-1$, $[\bg_j,\bg_r] = \{0\}$, $j =
1,..., r$.
The study of these Lie groups presents considerable challenges and many basic questions pertaining their analytical and geometric properties presently remain fully open. Nowadays known as Carnot groups, they model physical systems with constrained dynamics, in which motion is only possible in a prescribed set of directions in the tangent space (sub-Riemannian, versus Riemannian geometry), see E. Cartan's seminal work \cite{Ca}. Every stratified nilpotent Lie group is endowed with an important second order partial differential operator. The idea goes back to the visionary address of E. Stein \cite{Stein}.  Fix a basis $\{e_1,...,e_{m}\}$ of the Lie algebra generating layer $\bg_1$ (called the horizontal layer) and define left-invariant vector fields on $\bG$ by the rule $X_j(g) = dL_g(e_j)$, $g\in \bG$,
where $dL_g$ is the differential of the left-translation operator $L_g(g') = g \circ g'$. We indicate with $|\nabla_H f|^2 = \sum_{i=1}^m (X_i f)^2$ the horizontal gradient of a function $f$ with respect to the basis $\{e_1,...,e_m\}$. 
Associated with such \emph{carr\'e du champ} there is a natural left-invariant intrinsic distance in $\bG$ defined by 
\begin{equation}\label{d}
d(g,g') \overset{def}{=} \sup \{f(g) - f(g')\mid f\in C^\infty(\bG),\ |\nabla_H f|^2\le 1\}.
\end{equation} 
Such $d(g,g')$ coincides with the Carnot-Carath\'eodory distance, see Gromov's beautiful account \cite{Gro}. We respectively denote by $W^{1,p}(\bG)$ and $BV(\bG)$ the Folland-Stein Sobolev space and the space of $L^1$ functions having bounded variation with respect to the horizontal bundle, see Section \ref{S:prelim} for precise definitions and notations. The horizontal Laplacian relative to $\{e_1,...,e_m\}$ is defined as
\begin{equation}\label{L}
\mathscr L = \sum_{i=1}^m X_i^2.
\end{equation}
When the step of the stratification of $\bg$ is $r=1$, then the group is Abelian and we are back into the familiar Riemannian setting of $\Rn$, in which case $\mathscr L = \Delta$ is the standard Laplacian. However, in the genuinely non-Abelian situation when $r>1$, then the differential operator $\mathscr L$ fails to be elliptic at every point of the ambient space $\bG$, but it possesses nonetheless a heat semigroup $P_t f(g) = e^{-t \mathscr L} f(g) = \int_{\bG} p(g,g',t) f(g') dg'$, see the construction in Folland's work \cite{Fo}. Such semigroup is positive, formally self-adjoint and stochastically complete, i.e. $P_t 1 = 1$. 

The heat kernel $p(g,g',t)$ satisfies appropriate Gaussian estimates with respect to the metric $d(g,g')$ (see Proposition \ref{P:gaussian} below), but this fact is of no help when it comes to a universal statement such as Theorem \hyperref[T:bbm]{B} since, in general, there is no known explicit representation of $p(g,g',t)$, and such heat kernel fails to have any symmetry whatsoever. In particular, it is not a function of the distance $d(g,g')$, nor it is for instance spherically symmetric in any of the layers $\bg_i$, $i=1,...,r$, of the Lie algebra (see the discussion in the opening of Section \ref{S:new}).
Despite these disheartening aspects, we have the following two surprising results.

\begin{theorem}\label{T:mainp}
Let $1<p<\infty$. Then
$$
W^{1,p}(\bG) = \{f\in L^p(\bG)\mid \underset{t\to 0^+}{\liminf}\ \frac{1}{t^{\frac{p}{2}}}\int_{\bG} P_t(|f-f(g)|^p)(g) dg <\infty\}.
$$
Furthermore, if $f\in W^{1,p}(\bG)$ then
\begin{equation}\label{2p}
\underset{t \to 0^+}{\lim} \frac{1}{t^{\frac{p}{2}}}\int_{\bG} P_t(|f-f(g)|^p)(g) dg = \frac{2 \G(p)}{\G(p/2)} \int_{\bG} |\nabla_H f(g)|^p dg.
\end{equation}
\end{theorem}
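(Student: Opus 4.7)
The plan is to reduce the theorem to a single structural identity for the heat kernel which I will call the \emph{horizontal decoupling identity}: for every $v\in\bg_1\simeq\Rm$ (the horizontal layer identified via the basis $\{e_1,\dots,e_m\}$) and every $t>0$,
\begin{equation*}
\int_{\bG} p(e,g,t)\,|\langle v,\pi_H(g)\rangle|^p\,dg \;=\; \frac{2\,\G(p)}{\G(p/2)}\;t^{p/2}\,|v|^p,
\end{equation*}
where $\pi_H:\bG\to\Rm$ is the horizontal projection in exponential coordinates. The identity holds despite the complete absence of symmetry of $p(e,\cdot,t)$ because the \emph{horizontal marginal} of the heat measure is Gaussian. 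This can be proved analytically via the observation that $X_k\pi_j=\delta_{kj}$ makes $\mathscr L\pi_j=0$ and, more generally, $u_\xi(g):=e^{i\langle\xi,\pi_H(g)\rangle}$ solves $\mathscr L u_\xi=-|\xi|^2 u_\xi$, so $P_t u_\xi=e^{-t|\xi|^2}u_\xi$ and hence $\int p(e,g,t)\,e^{i\langle\xi,\pi_H(g)\rangle}\,dg=e^{-t|\xi|^2}$; equivalently, in probabilistic terms, BCH yields $\pi_H(g\cdot g'')=\pi_H(g)+\pi_H(g'')$, forcing the horizontal components of the hypoelliptic Brownian motion on $\bG$ to coincide with a standard Brownian motion on $\Rm$. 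Once this is established, the right-hand side of \eqref{2p} reduces to a one-dimensional Gaussian moment, producing exactly the constant $\frac{2\G(p)}{\G(p/2)}$ via the Legendre-duplication manipulation that gave \eqref{thesispPtk}.

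With the decoupling in hand I would prove \eqref{2p} first for $f\in C^\infty_0(\bG)$. Using left-invariance and the parabolic rescaling $g''=\delta_{\sqrt t}(h)$ (under which $p(e,g'',t)\,dg''$ becomes $p(e,h,1)\,dh$), one has
\[
f(g\cdot\delta_{\sqrt t}(h))-f(g) \;=\; \sqrt{t}\,\langle\nabla_H f(g),\pi_H(h)\rangle + R_t(g,h),
\]
where $R_t$ gathers the higher-layer contributions (which carry the small factor $t^{j/2}$, $j\ge 2$) and the second-order horizontal Taylor terms. Applying the decoupling pointwise in $g$ to the main linear term produces exactly $t^{p/2}\,\frac{2\G(p)}{\G(p/2)}\,|\nabla_H f(g)|^p$, and a Minkowski / reverse-Minkowski splitting on $(P_t(|f-f(g)|^p))^{1/p}$ performed first in $g''$ and then in $g$, combined with the Gaussian estimates of Proposition~\ref{P:gaussian}, controls the remainder in $L^p(\bG)$ by $o(\sqrt t)$. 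Passage to a general $f\in W^{1,p}(\bG)$ is then carried out by density of smooth functions in the Folland-Stein space together with the $L^p$-contractivity of $P_t$, which propagates both inequalities uniformly in $t$.

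For the converse direction the hypothesis $p>1$ becomes essential. Viewing the rescaled differences $h\mapsto t^{-1/2}(f(g\cdot\delta_{\sqrt t}(h))-f(g))$ as a family of maps with values in $L^p(p(e,h,1)\,dh)$, the assumed finiteness of the $\liminf$ yields a uniform $L^p(\bG;L^p)$-bound; reflexivity provides a weak limit along a subsequence, and testing this limit against smooth horizontal vector fields while using the decoupling identity in reverse identifies its modulus with $|\nabla_H f|$ and places $f$ in $W^{1,p}(\bG)$; lower semicontinuity simultaneously delivers the sharp lower bound in \eqref{2p}.

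\emph{Main obstacle.} The conceptually surprising step is the decoupling identity itself, since nothing in the usual sub-Riemannian folklore makes it evident that the horizontal marginal of $p(e,\cdot,t)$ is Gaussian; this is the mechanism by which a universal, dimension-free constant survives the loss of symmetry of the heat kernel. The main technical difficulty is then the remainder analysis in the non-commutative Taylor expansion, whose terms mix horizontal and higher-layer derivatives of heterogeneous scale $t^{j/2}$ and must tolerate the merely $L^p$ regularity of $\nabla_H f$ in the density step --- a delicate balance between the stratified scaling and the Gaussian bounds of Proposition~\ref{P:gaussian}.
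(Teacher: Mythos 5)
Your proposal is correct in substance and is built on exactly the same pivot as the paper: the fact that the horizontal marginal of $p(e,\cdot,t)$ is the standard Gaussian on $\R^m$ (the paper's Theorem \ref{T:int}, whose moment consequence is Lemma \ref{L:id}). Where you differ is in how you reach that fact and in how you run the converse implication. For the decoupling, you compute the characteristic function of the horizontal marginal from $\mathscr L\, e^{i\langle \xi, z\rangle}=-\|\xi\|^2 e^{i\langle \xi,z\rangle}$ (valid because $X_i z_j=\delta_{ij}$ by \eqref{Xi}); the paper instead integrates $p$ in the $\sigma$-variables and shows, using the triangular structure of the $X_i$ and integration by parts in $\sigma$, that the marginal solves the classical heat equation, concluding by uniqueness of bounded solutions in $\R^m$. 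Your Fourier route is shorter, but the step $P_t u_\xi=e^{-t\|\xi\|^2}u_\xi$ for the bounded, non-integrable function $u_\xi$ needs the same kind of justification (either a uniqueness theorem for bounded solutions of $\mathscr L-\partial_t$ on $\bG$, or differentiation of $t\mapsto\int p(e,g,t)u_\xi(g)\,dg$ under the integral via Proposition \ref{P:gaussian} followed by integration by parts — which is essentially the paper's computation in Fourier form). The limsup inequality and the density passage match the paper's Lemma \ref{L:sup} and Lemma \ref{L:diffquot}. For the converse, the paper mollifies $f$, uses the monotonicity of the functional under group convolution (inequality \eqref{unifeps}) to transfer the finite liminf to $f_\ve$, proves the sharp lower bound for the smooth $f_\ve$, and then applies Banach--Alaoglu to $\nh f_\ve$; you instead apply weak compactness directly to the rescaled difference quotients $t^{-1/2}\bigl(f(g\circ\delta_{\sqrt t}h)-f(g)\bigr)$ in $L^p\bigl(\bG; L^p(p(e,h,1)\,dh)\bigr)$. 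Both work for $p>1$.

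One point in your converse is stated too loosely: weak convergence does not "identify the modulus of the limit with $|\nh f|$". Testing against products $\phi(g)\psi(h)$ only pins down the horizontal moments $\int F(g,h)\psi(h)p(e,h,1)\,dh=\langle \nh f(g), m_\psi\rangle$ of the weak limit $F$, which suffices to show $X_if\in L^p(\bG)$ but not to determine $F$ pointwise. To recover the sharp constant you must apply H\"older in the fibre with the extremal test function $\psi(h)=|\langle\nu,\pi_H(h)\rangle|^{p-1}\mathrm{sgn}\langle\nu,\pi_H(h)\rangle$, $\nu=\nh f(g)/|\nh f(g)|$, using the Gaussianity of the marginal to evaluate $m_\psi$, and then invoke weak lower semicontinuity of the norm. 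This is standard Bourgain--Brezis--Mironescu technology and closes the argument, but as written the step is a genuine omission rather than a detail.
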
 

Concerning the case $p=1$, the following is our second main result.

\begin{theorem}\label{T:p1}
We have
\begin{equation}\label{1uno}
BV(\bG) =\left\{f\in L^1(\bG)\mid \underset{t \to 0^+}{\liminf}\  \frac{1}{\sqrt t} \int_{\bG} P_t\left(|f - f(g)|\right)(g) dg<\infty \right\},
\end{equation}
and for any $f\in W^{1,1}(\bG)$
\begin{equation}\label{2unouno}
\underset{t \to 0^+}{\lim}  \frac{1}{\sqrt{t}}\ \int_{\bG} P_t\left(|f - f(g)|\right)(g) dg = \frac{2}{\sqrt{\pi}} \int_{\bG} |\nabla_H f(g)| dg.
\end{equation}
Furthermore, if the Carnot group $\bG$ has the property \emph{(B)}\footnote{for this property the reader should see Definition \ref{D:B} below}, then for any $f\in BV(\bG)$ we have
\begin{equation}\label{2uno}
\underset{t \to 0^+}{\lim}  \frac{1}{\sqrt{t}}\ \int_{\bG} P_t\left(|f - f(g)|\right)(g) dg = \frac{2}{\sqrt{\pi}} {\rm{Var}}_\bG(f).
\end{equation}
\end{theorem}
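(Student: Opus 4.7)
\medskip

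\noindent\textbf{Plan of proof.}
The plan is to mirror the integral decoupling strategy that underlies Theorem \ref{T:mainp}, deal first with the identity \eqref{2unouno} for $f\in W^{1,1}(\bG)$, and then bootstrap up to general $f\in BV(\bG)$ via heat-kernel regularisation. First I would treat the case of a smooth compactly supported $f$. By left-invariance,
$$
\frac{1}{\sqrt t}\int_\bG P_t(|f-f(g)|)(g)\,dg
=\frac{1}{\sqrt t}\int_\bG\int_\bG |f(g\circ h)-f(g)|\,p(e,h,t)\,dh\,dg,
$$
where $e$ denotes the group identity. A Taylor-type expansion along the horizontal fibre at the identity yields $f(g\circ h)-f(g)=\sum_{j=1}^m X_jf(g)\,\pi_1(h)_j + R(g,h)$, with $\pi_1$ the projection onto the horizontal layer $\bg_1$ and remainder controlled in size by $d(e,h)^2$ on compact sets. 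The key input is the integral decoupling property of $p(e,h,t)$ (the engine of Theorem \ref{T:mainp}), which should give, for every $\xi\in\R^m$,
$$
\frac{1}{\sqrt t}\int_\bG \bigl|\xi\cdot\pi_1(h)\bigr|\,p(e,h,t)\,dh\ \longrightarrow\ \frac{2}{\sqrt\pi}\,|\xi|,
$$
together with a Gaussian majorisation of the error term from the Gaussian heat kernel bound in Proposition \ref{P:gaussian}. Combining, one gets \eqref{2unouno} for smooth $f$, and a standard density argument in $W^{1,1}(\bG)$ then covers the full Folland--Stein class.

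Next I would deduce the characterisation \eqref{1uno}. Given $f\in L^1(\bG)$ such that the liminf in \eqref{1uno} is finite, one tests against $\phi=(\phi_1,\dots,\phi_m)\in C^1_0(\bG,\R^m)$ with $|\phi|\le 1$ and aims to bound $\int_\bG f\,\sum_j X_j\phi_j\,dg$ by a multiple of $\operatorname{Var}_\bG(f)$. The mechanism is dual to the $W^{1,1}$ computation above: using the semigroup representation and integration by parts in $g'$, one rewrites
$\int_\bG f(g)\sum_j X_j\phi_j(g)\,dg$ as a limit of weighted averages of $f(g')-f(g)$ against a signed kernel dominated by $p(g,g',t)$, and the decoupling property converts this into an estimate of the form
$$
\Big|\int_\bG f\sum_j X_j\phi_j\,dg\Big|\ \le\ \frac{\sqrt\pi}{2}\,\|\phi\|_{L^\infty}\,\liminf_{t\to 0^+}\frac{1}{\sqrt t}\int_\bG P_t(|f-f(g)|)(g)\,dg.
$$
Taking the supremum over admissible $\phi$ identifies $f$ as an element of $BV(\bG)$ and simultaneously delivers the lower bound by $\frac{2}{\sqrt\pi}\operatorname{Var}_\bG(f)$.

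Finally, the equality \eqref{2uno} for $f\in BV(\bG)$ is the step where I expect the main obstacle, and where property (B) is used. The natural approach is to regularise $f$ by the semigroup itself, $f_\tau=P_\tau f$, so that $f_\tau\in W^{1,1}(\bG)\cap C^\infty$. The $W^{1,1}$ case just established gives \eqref{2unouno} for each $f_\tau$, with right-hand side $\frac{2}{\sqrt\pi}\int_\bG|\nabla_H f_\tau|\,dg$. One then needs: (i) a commutation/monotonicity step letting one replace $P_t(|f-f(g)|)(g)$ with $P_t(|f_\tau-f_\tau(g)|)(g)$ up to vanishing errors, and (ii) the BV-convergence $\int_\bG|\nabla_H f_\tau|\,dg\to\operatorname{Var}_\bG(f)$ as $\tau\to 0^+$. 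Property (B) is the hypothesis that unlocks (ii) (or, more likely, the sharper double-limit needed to combine $t\to 0$ and $\tau\to 0$); the lower bound of \eqref{2uno} is free from the previous paragraph, so one only needs the matching upper bound, and this is where property (B) ensures that no variation is lost in the regularisation. The genuine difficulty is precisely this delicate interchange of the two limits in the absence of any explicit form or symmetry for $p(g,g',t)$, which is why property (B) is imposed in the statement.
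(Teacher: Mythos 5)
Your treatment of \eqref{2unouno} and of the qualitative part of \eqref{1uno} is essentially sound and close in spirit to the paper, which proves the $\limsup$ bound by the same stratified Taylor expansion plus the decoupling identity \eqref{Inucostante} (Lemmas \ref{L:sup} and \ref{L:diffquot}), and gets $f\in BV(\bG)$ from finiteness of the $\liminf$. One caveat on your duality step for \eqref{1uno}: rewriting $\int_\bG f\sum_j X_j\phi_j\,dg$ via integration by parts against the heat kernel produces the signed kernel $X_j p(g,g',t)$, and the Gaussian bound \eqref{gauss1} only dominates it by $t^{-1/2}$ times a Gaussian with a \emph{worse} decay constant; this yields membership in $BV(\bG)$ but not the sharp constant $\tfrac{\sqrt\pi}{2}$ (a pointwise bound $|\nabla_H p|\le \tfrac{2}{\sqrt{\pi t}}\,p$ is false near the diagonal). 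The paper instead mollifies with group convolutions $f_\ve=\rho_\ve\star f$, applies the $p=1$ case of the liminf inequality \eqref{evvivaevviva} (which feeds the unit vector $\nabla_H f_\ve/|\nabla_H f_\ve|$ into \eqref{Inucostante}), and then tests against $\zeta\in\mathscr F$; this is what produces the sharp lower bound \eqref{infunosharp}, which you need both for the $\liminf$ half of \eqref{2unouno} and as the lower bound in \eqref{2uno}.

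The genuine gap is in your proof of \eqref{2uno}. The regularisation $f_\tau=P_\tau f$ with a subsequent interchange of the limits $t\to 0^+$ and $\tau\to 0^+$ cannot be made to work: the error committed in replacing $f$ by $f_\tau$ is controlled, via Proposition \ref{P:minchietto}, by $C\,\operatorname{Var}_\bG(f-f_\tau)$, and for a genuine $BV$ function (e.g.\ $f=1_E$) the total variation of $f-P_\tau f$ does \emph{not} tend to zero as $\tau\to 0^+$ -- only the $L^1$ norm does -- so no uniform-in-$t$ control is available and the upper bound is lost exactly where it matters. Moreover, property (B) has nothing to do with commuting the semigroup with the gradient: it is a De Giorgi--type blow-up/rectifiability statement for reduced boundaries of sets of finite perimeter (Definition \ref{D:B}). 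The actual argument runs through the coarea formula and the Bramanti--Miranda--Pallara blow-up theorem (Theorem \ref{T:bmpB}), which under property (B) identifies the limit as $4\int_\bG\int_{T_\bG(\sigma_f(g))}p(g',e,1)\,dg'\,d\operatorname{Var}_\bG(f)(g)$, i.e.\ an integral of the heat kernel over vertical hyperplanes $T_\bG(\nu)\cong T_\nu\times\R^{N-m}$. The final, and indispensable, ingredient is the second identity of Lemma \ref{L:id}, the Huisken-type formula \eqref{punoint}, which evaluates each such inner integral as the universal constant $\tfrac{1}{2\sqrt{\pi}}$ independently of $\nu$; this identity, again a consequence of the integral decoupling Theorem \ref{T:int}, is absent from your outline, and without it even the blow-up theorem would not produce the dimensionless constant $\tfrac{2}{\sqrt\pi}$ in \eqref{2uno}.
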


%%%%%%%%%%%%%%%%%%%%%%%%%%%%%%%%%%%%%%%%%%%%
We draw the reader's attention to the remarkable similarity between \eqref{2p}, \eqref{2uno} and their Euclidean predecessors \eqref{thesispPtk}, \eqref{thesis11}. The presence of the universal constant $\frac{2 \G(p)}{\G(p/2)}$ in the right-hand sides of \eqref{2p}, \eqref{2uno} underscores a remarkable general character of the heat semigroup that we next clarify. Having stated our main results, we must explain our comment on their surprising aspect. While we refer the reader to Section \ref{S:new} for a detailed discussion of this point, here we confine ourselves to mention that the crucial novelty in our approach is Theorem \ref{T:int} below. The latter represents an \emph{integral decoupling property} of the sub-Riemannian heat kernels. With such result in hands we obtain the basic Lemma \ref{L:id}. It is precisely this lemma that accounts for the universal character of Theorems \ref{T:mainp} and \ref{T:p1}. We mention that Lemma \ref{L:id} is reminiscent of two remarkable properties of the classical heat semigroup first discovered respectively by Ledoux in his approach to the isoperimetric inequality \cite{Led}, and by Huisken in his work on singularities of flow by mean curvature \cite{Hui}. It is worth remarking at this point that, as we explain in Section \ref{SS:fulvio} below, some experts in the noncommutative analysis community are familiar with the integral decoupling property in Theorem \ref{T:int}. However, the use that we make of such result is completely new. In this respect, we mention that the special case of Carnot groups of step 2 in Theorem \ref{T:p1} was treated in our recent work \cite{GTbbmd}. In that setting we were able to extract the crucial information \eqref{punoint} in Lemma \ref{L:id} from the explicit Gaveau-Hulanicki-Cygan representation formula \eqref{ournucleo} below. No such formula is available for Carnot groups of step 3 or higher, and it is precisely a result such as Theorem \ref{T:int} that allows to successfully handle this situation.

As previously mentioned, in the special situation when $\bG=\Rn$ we recover Theorem \hyperref[T:bbm]{B} from Theorems \ref{T:mainp} and \ref{T:p1}, as well as a dimensionless heat semigroup formulation of the Brezis-Bourgain-Mironescu limiting behaviour \eqref{seminorm}. We next show that this comment extends to the geometric setting of the present paper. We begin by introducing the relevant function spaces.

\begin{definition}\label{D:besov} Let $\bG$ be a Carnot group. For any $0<s<1$ and $1\le p<\infty$ we define
the \emph{fractional Sobolev space} $\Bps$ as the collection of all functions $f\in L^p(\bG)$ such that the seminorm
$$
\mathscr N_{s,p}(f) = \left(\int_0^\infty  \frac{1}{t^{\frac{s p}2 +1}} \int_{\bG} P_t\left(|f - f(g)|^p\right)(g) dg dt\right)^{\frac 1p} < \infty.
$$
\end{definition}
The norm 
\[
||f||_{\Bps} = ||f||_{\Lp(\bG)} + \mathscr N_{s,p}(f)
\]
 turns $\Bps$ into a Banach space. We stress that the space $\Bps$ is nontrivial since, for instance, it contains $W^{1,p}(\bG)$ (see Lemma \ref{L:inclus} below). We also emphasise that, when the step $r=1$ and $\bG\cong \R^n$ is Abelian, then the space $\Bps$ coincides with the classical Aronszajn-Gagliardo-Slobedetzky space of fractional order $W^{s,p}(\R^n)$ of the functions $f\in L^p$ with finite seminorm $[f]^p_{s,p}$ in \eqref{ags}. It is in fact an exercise to recognise in this case that
\[
\mathscr N_{s,p}(f)^p = \frac{2^{sp} \G(\frac{n+sp}2)}{\pi^{\frac n2}}\ [f]_{s,p}^p.
\]
Concerning the spaces $\Bps$ our main result is the following. It provides a sub-Riemannian dimensionless version of the above mentioned limiting phenomenon \eqref{seminorm}.

\begin{theorem}\label{T:bbmG}
Let $\bG$ be a Carnot group. Then
\begin{equation}\label{1sp}
W^{1,p}(\bG) = \{f\in L^p(\bG)\mid \underset{s\to 1^-}{\liminf}\ (1-s) \mathscr N_{s,p}(f)^p <\infty\}\qquad \mbox{ for }1< p<\infty,
\end{equation}
and
\begin{equation}\label{1suno}
BV(\bG) =\left\{f\in L^1(\bG)\mid \underset{s\to 1^-}{\liminf}\ (1-s) \mathscr N_{s,1}(f) <\infty \right\}.
\end{equation}
For any $1\leq p<\infty $ and $f\in W^{1,p}(\bG)$, one has 
\begin{equation}\label{2sp}
\underset{s\to 1^-}{\lim}\ (1-s) \mathscr N_{s,p}(f)^p = \frac{4 \G(p)}{p\G(p/2)} \int_{\bG} |\nabla_H f(g)|^p dg.
\end{equation}
Furthermore, if the Carnot group $\bG$ has the property \emph{(B)}, then for any $f\in BV(\bG)$ we have
\begin{equation}\label{2suno}
\underset{s\to 1^-}{\lim}\ (1-s) \mathscr N_{s,1}(f) = \frac{4}{\sqrt{\pi}}  {\rm{Var}}_\bG(f).
\end{equation}
\end{theorem}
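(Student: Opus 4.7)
The plan is to reduce Theorem \ref{T:bbmG} to the small-time heat semigroup characterisations of Theorems \ref{T:mainp} and \ref{T:p1} via an elementary Abelian Tauberian argument. First I would set
$$
\Phi_p(t) = \frac{1}{t^{p/2}} \int_{\bG} P_t(|f - f(g)|^p)(g)\, dg, \qquad t>0,
$$
so that by the very definition of $\mathscr N_{s,p}$ and Fubini one has, for $1\le p<\infty$ and $\alpha = \tfrac{p(1-s)}{2}\in(0,\tfrac{p}{2})$,
$$
(1-s)\,\mathscr N_{s,p}(f)^p \;=\; \frac{2\alpha}{p}\int_0^\infty t^{\alpha-1}\,\Phi_p(t)\, dt.
$$
The right-hand side is the Mellin transform of $\Phi_p$ weighted by $\alpha$, and the whole proof reduces to analysing it as $\alpha\to 0^+$.

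Next I would record two elementary properties of $\Phi_p$ that will feed into the Tauberian step. On the one hand, stochastic completeness $P_t 1 = 1$ and the $L^p$-contractivity of $P_t$ give $\int_\bG P_t(|f-f(g)|^p)(g)\, dg \le 2^p \|f\|_p^p$, so $\Phi_p(t)\le C_{p,f}\, t^{-p/2}$ for every $t>0$; hence the tail integral satisfies
$$
\alpha\int_1^\infty t^{\alpha-1}\Phi_p(t)\, dt \;\le\; C_{p,f}\,\alpha\int_1^\infty t^{\alpha-1-p/2}\,dt \;=\; \frac{C_{p,f}\,\alpha}{p/2-\alpha} \;\xrightarrow[\alpha\to 0^+]{}\; 0.
$$
On the other hand, for every $\delta\in (0,1)$ the probability-type identity $\alpha\int_0^\delta t^{\alpha-1}\,dt = \delta^\alpha \to 1$ as $\alpha\to 0^+$ shows that the measures $\alpha t^{\alpha-1}\mathbf 1_{(0,1)}\,dt$ concentrate at $t=0$. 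Combining these two facts, if $\Phi_p(t)\to L$ as $t\to 0^+$, a standard $\epsilon/\delta$ splitting argument yields $\alpha\int_0^\infty t^{\alpha-1}\Phi_p(t)\, dt \to L$.

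The convergence identities \eqref{2sp} and \eqref{2suno} then follow immediately. Indeed, Theorem \ref{T:mainp} (for $p>1$) and \eqref{2unouno} of Theorem \ref{T:p1} (for $p=1$, $f\in W^{1,1}(\bG)$) give
$$
\Phi_p(t) \;\xrightarrow[t\to 0^+]{}\; \frac{2\,\G(p)}{\G(p/2)}\int_{\bG}|\nabla_H f(g)|^p\, dg \qquad (f\in W^{1,p}(\bG)),
$$
while under property \textup{(B)} the identity \eqref{2uno} gives $\Phi_1(t)\to \tfrac{2}{\sqrt\pi}\,{\rm Var}_\bG(f)$ for $f\in BV(\bG)$. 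Inserting the limits into $(1-s)\mathscr N_{s,p}(f)^p = \tfrac{2\alpha}{p}\int_0^\infty t^{\alpha-1}\Phi_p(t)\,dt$ produces the constants $\tfrac{4\G(p)}{p\G(p/2)}$ and $\tfrac{4}{\sqrt\pi}$ in \eqref{2sp} and \eqref{2suno}.

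For the characterisations \eqref{1sp} and \eqref{1suno}, I would argue by contrapositive: suppose $f\in L^p(\bG)$ with $\liminf_{t\to 0^+}\Phi_p(t)=+\infty$; then for every $M>0$ there is $\delta\in(0,1)$ with $\Phi_p\ge M$ on $(0,\delta)$, and the lower bound
$$
\alpha\int_0^\infty t^{\alpha-1}\Phi_p(t)\,dt \;\ge\; M\,\alpha\int_0^\delta t^{\alpha-1}\,dt \;=\; M\,\delta^\alpha \;\xrightarrow[\alpha\to 0^+]{}\; M
$$
forces $\liminf_{s\to 1^-}(1-s)\mathscr N_{s,p}(f)^p\ge \tfrac{2M}{p}$ for every $M$, hence $=+\infty$. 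Contrapositively, $\liminf_{s\to 1^-}(1-s)\mathscr N_{s,p}(f)^p<\infty$ implies $\liminf_{t\to 0^+}\Phi_p(t)<\infty$, and the conclusions $f\in W^{1,p}(\bG)$ (when $p>1$) and $f\in BV(\bG)$ (when $p=1$) follow at once from Theorems \ref{T:mainp} and \ref{T:p1}. The only real subtlety — and the point I would be most careful about — is keeping the Tauberian step quantitative enough to handle the $\liminf$ hypothesis rather than an actual limit; this is why I avoid Karamata-type theorems and work directly with the explicit bound $\Phi_p(t)\le C_{p,f}\, t^{-p/2}$, which makes the tail estimate uniform in $\alpha$ and dispenses with any regularity assumption on $\Phi_p$ away from $0$.
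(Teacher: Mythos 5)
Your reduction is, in substance, the paper's own: the identity $(1-s)\,\mathscr N_{s,p}(f)^p=\tfrac{2\alpha}{p}\int_0^\infty t^{\alpha-1}\Phi_p(t)\,dt$ with $\alpha=\tfrac{p(1-s)}{2}$, the tail estimate coming from $\int_\bG P_t(|f-f(g)|^p)(g)\,dg\le 2^p\|f\|_p^p$, and the concentration of the measures $\alpha t^{\alpha-1}\mathbf 1_{(0,1)}\,dt$ at $t=0$ are exactly the ingredients of Proposition \ref{P:stars} (the chain \eqref{chaininfsup}), whose proof splits $\int_0^\infty=\int_0^\ve+\int_\ve^\infty$ in the same way. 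Your derivation of \eqref{2sp}, \eqref{2suno}, of \eqref{1sp}, and of the inclusion ``$\liminf$ finite $\Rightarrow$ membership'' in \eqref{1suno} is correct.

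There is, however, one genuine gap: the forward inclusion $BV(\bG)\subseteq\{f\in L^1(\bG)\mid \liminf_{s\to 1^-}(1-s)\mathscr N_{s,1}(f)<\infty\}$ in \eqref{1suno}, which the theorem asserts for \emph{every} Carnot group. As written, you obtain finiteness of the $\liminf$ only from convergence of $\Phi_1(t)$ as $t\to 0^+$, and you know such convergence only for $f\in W^{1,1}(\bG)$ (via \eqref{2unouno}) or for $f\in BV(\bG)$ under property (B) (via \eqref{2uno}). For a general $f\in BV(\bG)\setminus W^{1,1}(\bG)$ in a group not known to satisfy (B), no limit is available and your $\ve/\delta$ splitting has nothing to converge to. The missing ingredient is the uniform bound \eqref{minchiettof} of Proposition \ref{P:minchietto}, namely $\sup_{t>0}\Phi_1(t)\le C_1\,\operatorname{Var}_\bG(f)$ for every $f\in BV(\bG)$; feeding this into your splitting gives $\limsup_{s\to 1^-}(1-s)\mathscr N_{s,1}(f)\le 2C_1\operatorname{Var}_\bG(f)<\infty$, which is precisely how the paper closes this inclusion (\eqref{supuno} combined with the upper half of \eqref{chaininfsup}). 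With that one citation added, your argument is complete.
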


Our last result concerns the asymptotic behaviour in $s$ of the seminorms $\mathscr N_{s,p}(f)$ at the other end-point of interval $(0,1)$. Such result provides a dimensionless generalisation of that proved by Maz'ya and Shaposhnikova in \cite{MS}.

\begin{theorem}\label{T:MS}
Let $\bG$ be a Carnot group, and $1\leq p <\infty$. Suppose that $f\in \underset{0<s<1}{\bigcup}\Bps$. Then,
$$
\underset{s\to 0^+}{\lim} s \mathscr N_{s,p}(f)^p = \frac{4}{p} ||f||_p^p.
$$
\end{theorem}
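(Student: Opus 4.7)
The plan is to split the time integral defining $\mathscr{N}_{s,p}(f)^p$ at a finite threshold, say $t=1$, and handle the two pieces separately. Setting
$$
\Phi(t) := \int_\bG P_t(|f - f(g)|^p)(g)\, dg,
$$
we have $s\mathscr{N}_{s,p}(f)^p = s\int_0^1 t^{-\frac{sp}{2}-1}\Phi(t)\, dt + s\int_1^\infty t^{-\frac{sp}{2}-1}\Phi(t)\, dt$. By hypothesis $f\in\mathfrak B_{s_0,p}(\bG)$ for some $s_0\in(0,1)$, and for $s<s_0$ the bound $t^{-sp/2-1}\le t^{-s_0p/2-1}$ holds on $(0,1)$. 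Hence the first piece is dominated by $s\cdot\mathscr{N}_{s_0,p}(f)^p$, which vanishes as $s\to 0^+$. The limit is therefore entirely governed by the behaviour of $\Phi$ as $t\to\infty$.

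The heart of the proof is the claim $\lim_{t\to\infty}\Phi(t) = 2\|f\|_p^p$. By left-invariance, $P_tf(g)=\int_\bG q(z,t)\, f(g\circ z)\, dz$ with $q(z,t):=p(e,z,t)$. Since nilpotent Lie groups are unimodular, right translation preserves Haar measure, and Fubini yields
$$
\Phi(t) = \int_\bG q(z,t)\,\|\tau_z f - f\|_p^p\, dz,\qquad (\tau_z f)(g):=f(g\circ z).
$$
The anisotropic dilation scaling $q(z,t) = t^{-Q/2}q(\delta_{1/\sqrt t}z, 1)$, where $Q$ is the homogeneous dimension of $\bG$, together with the substitution $w = \delta_{1/\sqrt t} z$, gives
$$
\Phi(t) = \int_\bG q(w,1)\,\|\tau_{\delta_{\sqrt t} w} f - f\|_p^p\, dw.
$$
For every $w\neq e$, $d(e,\delta_{\sqrt t}w) = \sqrt t\, d(e,w)\to \infty$; a density argument (for $f\in C_c(\bG)$ the translates $\tau_{\delta_{\sqrt t} w} f$ eventually have support disjoint from $\mathrm{supp}\, f$, hence $\|\tau_{\delta_{\sqrt t}w}f - f\|_p^p = \|\tau_{\delta_{\sqrt t}w}f\|_p^p + \|f\|_p^p = 2\|f\|_p^p$; extend to $L^p$ by density using the isometric action of right translation) shows $\|\tau_{\delta_{\sqrt t}w}f - f\|_p^p\to 2\|f\|_p^p$ pointwise in $w$. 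The envelope $2^p\|f\|_p^p\, q(w,1)$ dominates, and dominated convergence finishes the claim.

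To conclude, one computes directly $s\int_1^\infty t^{-sp/2-1}\, dt = \tfrac{2}{p}$. Writing $\Phi(t) = 2\|f\|_p^p + \epsilon(t)$ with $\epsilon(t)\to 0$ and $|\epsilon(t)|\le 2^{p+1}\|f\|_p^p$, and splitting the second integral at a large $T$ on which $|\epsilon|$ is small, one obtains
$$
s\int_1^\infty t^{-sp/2-1}\Phi(t)\, dt \longrightarrow \frac{4}{p}\|f\|_p^p \qquad \text{as } s\to 0^+,
$$
which is the asserted limit.

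The main obstacle is the second paragraph: the dimensionless asymptotic $2\|f\|_p^p$ relies on the fact that, despite the complete absence of symmetry of the sub-Riemannian heat kernel, the heat flow at large times is governed by the dilation structure and sends mass arbitrarily far in the intrinsic metric. The essential group-theoretic ingredients are left-invariance, unimodularity, and the homogeneity $q(z,t)=t^{-Q/2}q(\delta_{1/\sqrt t}z,1)$; these let the Euclidean intuition behind the classical Maz'ya-Shaposhnikova theorem survive the loss of spherical symmetry, and notably no integral decoupling property such as Theorem \ref{T:int} is needed here.
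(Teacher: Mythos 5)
Your argument is correct, and it takes a genuinely different route from the paper. The paper proves Theorem \ref{T:MS} by checking that $P_t$ enjoys the contractivity, ultracontractivity and density properties (a)--(c) listed in Section \ref{S:seminorms} and then invoking verbatim the abstract semigroup argument of \cite[Theorem 1.1]{BGT}; in that scheme the key large-time limit $\Phi(t)=\int_\bG P_t(|f-f(g)|^p)(g)\,dg \to 2\|f\|_p^p$ is extracted from ultracontractivity combined with the density of $C^\infty_0(\bG)$ in $\Bps$ (Lemma \ref{L:dens}). You instead prove that limit directly: the symmetry and parabolic homogeneity $p(z,e,t)=t^{-Q/2}p(\delta_{1/\sqrt t}z,e,1)$ of Proposition \ref{P:prop}, together with bi-invariance of Haar measure, convert $\Phi(t)$ into $\int_\bG p(w,e,1)\,\|\tau_{\delta_{\sqrt t}w}f-f\|_p^p\,dw$, and the Riemann--Lebesgue-type fact that $\|\tau_z f-f\|_p^p\to 2\|f\|_p^p$ as $|z|\to\infty$ (checked on compactly supported continuous functions and transferred to $L^p$ via the isometry of right translations) finishes the job by dominated convergence. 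This buys a self-contained proof requiring neither ultracontractivity nor Lemma \ref{L:dens}, only density of compactly supported functions in $L^p(\bG)$; the price is that it leans on the dilation structure of Carnot groups and so, unlike the paper's route, would not extend to the non-homogeneous H\"ormander semigroups treated in \cite{BGT}. The bookkeeping steps --- discarding $s\int_0^1$ using $f\in\mathfrak B_{s_0,p}(\bG)$ and the monotonicity of $t\mapsto t^{-sp/2-1}$ on $(0,1)$, and the computation $s\int_1^\infty t^{-sp/2-1}\,dt=2/p$ together with the splitting of the error at a large $T$ --- are all sound, and they also implicitly recover the embedding $\Bps\subset\mathfrak B_{\sigma,p}(\bG)$ for $\sigma\le s$ that the paper quotes from \eqref{besovinbesov}.
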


In closing, we briefly discuss the structure of the paper. In Section \ref{S:prelim} we recall the geometric setup. In Section \ref{S:prep} we present some basic preparatory results that will be needed in the rest of the paper. 
Section \ref{S:new} is central to the rest of our work, but we also feel that it has an independent interest with consequences that go beyond those in the present work. We establish Theorem \ref{T:int} which, as we have said, represents a key property of the heat kernel in a Carnot group. With such result in hand, we obtain the crucial Lemma \ref{L:id}. Section \ref{S:proofs}
is devoted to proving Theorems \ref{T:mainp} and \ref{T:p1}. Finally, in Section \ref{S:seminorms} we prove Theorems \ref{T:bbmG} and \ref{T:MS}.

%%%%%%%%%%%%%%%%%%%%%%%%%%%%%%%%%%%%%%%%%%%%%%%%%%%%

%%%%%%%%%%%%%%%%%%%%%%%%%%%%%%%%%%%%%%%%%%%%%%%%%%%%%%%%%%%%%%%%%%%%%%%%%%%%%%%%%%%%%%

\section{Background}\label{S:prelim}

In the last decades various aspects of analysis and geometry in Carnot groups have attracted a lot of attention, and we refer to the monographs \cite{FS, V, CG, VSC, Gro, Ri, BLU, Gparis} for insightful perspectives. For the reader's convenience we have collected in this section the background material which is needed in the present paper. Although the relevant geometric setting has been introduced in Section \ref{S:intro}, we recall it here.

\begin{definition}\label{D:carnot}
Given $r\in \mathbb N$, a \emph{Carnot group} of step $r$ is a simply-connected real Lie group $(\bG, \circ)$ whose Lie algebra $\bg$ is stratified and $r$-nilpotent. This means that there exist vector spaces $\bg_1,...,\bg_r$ such that  
\begin{itemize}
\item[(i)] $\bg=\bg_1\oplus \dots\oplus\bg_r$;
\item[(ii)] $[\bg_1,\bg_j] = \bg_{j+1}$, $j=1,...,r-1,\ \ \ [\bg_1,\bg_r] = \{0\}$.
\end{itemize}
\end{definition}
We assume that $\bg$ is endowed with a
scalar product $\langle\cdot,\cdot\rangle$ with respect to which the layers $\bg_j's$, $j=1,...,r$, are mutually orthogonal.  We let $m_j =$ dim\ $\bg_j$, $j=
1,...,r$, and denote by $N = m_1 + ... + m_r$ the topological
dimension of $\bG$.  From the assumption (ii) on the Lie
algebra it is clear that any basis of the first layer $\bg_1$ bracket generates the whole Lie algebra $\bg$. Because of such special role $\bg_1$ is usually called the horizontal
layer of the stratification. For ease of notation we henceforth write $m = m_1$. In the case in which $r =1$ we are in the Abelian situation in which $\bg = \bg_1$, and thus $\bG$ is isomorphic to $\R^m$, where $m =$ dim\ $\bg_1$. We are thus back in $\R^m$, there is no sub-Riemannian geometry involved and everything is classical. We are of course primarily interested in the genuinely non-Riemannian setting $r>1$. The exponential map $\exp : \bg \to \bG$ defines an analytic
diffeomorphism of the Lie algebra $\bg$ onto $\bG$, see e.g. \cite[Sec. 2.10 forward]{V}. Using such diffeomorphism, whenever convenient we will routinely identify a point $g = \exp \xi \in \bG$ with its logarithmic image $\xi = \exp^{-1} g\in \bg$. With such identification, if $\xi = \xi_1+...+\xi_r$, we let $\xi_1 = z_{1} e_{1} +...+z_{m}e_{m}\in \bg_1$, and  $\xi_j = \sigma_{j,1} e_{j,1} +...+\sigma_{j,m_j}e_{j,m_j}\in \bg_j$, $j = 1,...,r$. Whenever convenient, see for instance the important expression \eqref{Xi} below, we will routinely identify the vector $\xi_1\in \bg_1$ with the point $z = (z_1,...,z_m)\in \Rm$, and the vector $\xi_2+...+\xi_r$ with the point $\sigma = (\sigma_{2},...,\sigma_{r})\in \R^{N-m}$, where 
$\sigma_j = (\sigma_{j,1},...,\sigma_{j,m_j}) \in \mathbb R^{m_j}$. 
Given $\xi, \eta\in \bg$, the Baker-Campbell-Hausdorff formula reads
\begin{equation}\label{BCH}
\exp(\xi) \circ \exp(\eta) = \exp{\bigg(\xi + \eta + \frac{1}{2}
[\xi,\eta] + \frac{1}{12} \big\{[\xi,[\xi,\eta]] -
[\eta,[\xi,\eta]]\big\} + ...\bigg)},
\end{equation}
where the dots indicate commutators of order four and higher, see \cite[Sec. 2.15]{V}.  Furthermore, since by (ii) in Definition \ref{D:carnot} all commutators of order higher than $r$ are trivial, in every Carnot group the Baker-Campbell-Hausdorff series in the right-hand side of \eqref{BCH} is finite. 
Using \eqref{BCH}, with $g = \exp \xi, g' = \exp \xi'$, one can recover the group law $g \circ g'$ in $\bG$ from the knowledge of the algebraic commutation relations between the elements of its Lie algebra. We respectively denote
by $L_g(g') = g \circ g'$ and $R_g(g') = g'\circ g$ the left- and right-translation operator by an element $g\in
\bG$. We indicate by $dg$ the bi-invariant
Haar measure on $\bG$ obtained by lifting via the exponential map
 the Lebesgue measure on $\bg$. 

The stratification (ii) induces in $\bg$ a natural one-parameter family of non-isotropic dilations by assigning to each
element of the layer $\bg_j$ the formal degree $j$.
Accordingly, if $\xi = \xi_1 + ... + \xi_r \in \bg$, with $\xi_j\in \bg_j$,
one defines dilations on $\bg$ by the rule
$\Delta_\lambda \xi = \lambda \xi_1 + ... + \lambda^r
\xi_r,$
and then use the exponential map to transfer such
anisotropic dilations to the group $\bG$ as
follows
\begin{equation}\label{dilG}
\delta_\lambda(g) = \exp \circ \Delta_\lambda \circ
\exp^{-1} g.
\end{equation}
The homogeneous dimension of $\bG$ with respect to \eqref{dilG} is the number $Q = \sum_{j=1}^r j m_j.$ Such number plays an important role in the analysis of Carnot groups. 
The motivation for this name comes from the equation 
$$(d\circ\delta_\lambda)(g) = \lambda^Q dg.$$
In the
non-Abelian case $r>1$, one clearly has $Q>N$. We will use the non-isotropic gauge in $\bg$ defined in the following way $|\xi| = \left(\sum_{j=1}^r ||\xi_j||^{2r!/j}\right)^{1/2r!}$, see \cite{Fo}. It is obvious that $|\Delta_\lambda \xi| = \lambda |\xi|$ for $\la>0$.
One defines a non-isotropic gauge in the group $\bG$ by letting $|g| = |\xi|$ for $g = \exp \xi$. Clearly, $|\cdot|\in C^\infty(\bG\setminus\{e\})$, and moreover $|\delta_\la g| = \la |g|$ for every $g\in \bG$ and $\la>0$. The pseudodistance generated by such gauge is equivalent to the intrinsic distance \eqref{d} on $\bG$, i.e., there exists a universal constant $c_1>0$ such that for every $g, g'\in \bG$
$$c_1 |(g')^{-1} \circ g| \le d(g,g') \le c_1^{-1} |(g')^{-1} \circ g|,$$  
see \cite{Fo}. 
Given a orthonormal basis $\{e_1,...,e_m\}$ of the horizontal layer $\bg_1$ one associates corresponding left-invariant $C^\infty$ vector fields on $\bG$ by the formula $X_i(g) = (L_g)_\star(e_i)$, $i=1,...,m$, where $(L_g)_\star$ indicates the differential of $L_g$. We note explicitly that, given a smooth function $u$ on $\bG$, the derivative of $u$ in $g\in \bG$ along the vector field $X_i$ is given by the Lie formula
\begin{equation}\label{lie}
X_i u(g)  = \frac{d}{ds} u(g \exp s e_i)\big|_{s=0}.
\end{equation}

\subsection{Stratified mean-value formula}\label{SS:stratified} Using the Baker-Campbell-Hausdorff formula \eqref{BCH} we can express \eqref{lie} in the logarithmic coordinates, obtaining the following representation that will be useful subsequently in this paper, see \cite[Prop. (1.26)]{FS} or also \cite[Remark 1.4.6]{BLU}. As previously mentioned, the layer $\bg_j$, $j=1,...,r,$ in the stratification of $\bg$ is assigned the formal degree $j$. Correspondingly, each homogeneous monomial $\xi_1^{\alpha_1}
\xi_2^{\alpha_2}...\xi_r^{\alpha_r}$, with multi-indices $\alpha_j =
(\alpha_{j,1},...,\alpha_{j,m_j}),\ j=1,...,r,$ is said to have
\emph{weighted degree} $k$ if
\[
\sum_{j=1}^r j (\sum_{s=1}^{m_j} \alpha_{j,s}) = k.
\]
Then for each $i = 1,...,m$ we have
\begin{align}\label{Xi}
X_i & = \frac{\partial }{\partial{z_i}} +
\sum_{j=2}^{r}\sum_{s=1}^{m_j}
b^s_{j,i}(z_1,...,\sigma_{{j-1},m_{(j-1)}}) \frac{\partial
}{\partial{\sigma_{j,s}}}
\\
& = \frac{\partial }{\partial{z_i}} +
\sum_{j=2}^{r}\sum_{s=1}^{m_j} b^s_{j,i}(\xi_1,...,\xi_{j-1})
\frac{\partial }{\partial{\sigma_{j,s}}}, \notag
\end{align}
where each $b^s_{j,i}$ is a homogeneous polynomial of
weighted degree $j-1$. Since we assume that $\bG$ is endowed with a left-invariant Riemannian metric with respect to which the vector fields $\{X_1,...,X_m\}$ are orthonormal, then given a smooth function $u$ on $\bG$ we denote by 
$\nh u = \sum_{i=1}^m X_i u X_i$
its horizontal gradient, and denote $|\nh u|^2 = \sum_{i=1}^m (X_i u)^2$. The quasi-metric open ball centered at $g$ and with radius $r>0$ with respect to the non-isotropic gauge $|\cdot|$ will be denoted by $B(g,r)$. We will need the following special case of the stratified Taylor inequality, see \cite[Theor. 1.42]{FS}. 

\begin{proposition}\label{P:taylor}
Let $f\in C^1(\bG)$. There exist universal constants $C, b>0$ such that for every $g\in \bG$ and $r>0$ one has for $g'\in B(g,r)$
\[
|f(g') - f(g) - \langle\nh f(g),z'-z\rangle| \leq C r \underset{g''\in B(g, b r)}{\sup} |\nh f(g'') - \nh f(g)|.
\]
\end{proposition}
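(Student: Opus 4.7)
The plan is to reduce to the case $g = e$ by left-invariance, and then to connect $e$ to $h := g^{-1}\circ g'$ by a horizontal curve and apply the fundamental theorem of calculus. Since the vector fields $X_i$ are left-invariant, defining $\tilde f(h'') := f(g\circ h'')$ gives $X_i \tilde f(h'') = (X_i f)(g\circ h'')$, and in particular $\nh \tilde f(e) = \nh f(g)$. Writing $g = \exp(z,\sigma)$ and $g' = \exp(z',\sigma')$, the Baker-Campbell-Hausdorff formula \eqref{BCH} gives $\log(g^{-1}\circ g') = (z'-z, \sigma'-\sigma) + (\text{brackets})$, and since every bracket lies in $\bg_2\oplus\cdots\oplus\bg_r$, the horizontal component of $\log h$ is exactly $w := z'-z$. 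As the gauge $|\cdot|$ and the distance $d$ are both left-invariant, it suffices to prove the estimate at $g = e$, with $h\in B(e,cr)$ for a universal $c>0$.

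Next, I would invoke Chow's theorem together with the equivalence between $|\cdot|$ and the Carnot-Carath\'eodory distance \eqref{d} to select a horizontal curve $\gamma:[0,1]\to\bG$ with $\gamma(0)=e$, $\gamma(1)=h$, horizontal velocity $\gamma'(s) = \sum_{i=1}^m c_i(s) X_i(\gamma(s))$, and total length $\int_0^1 |c(s)|\,ds \leq C|h|$. The representation \eqref{Xi} shows that the coefficient of $\partial_{z_i}$ in $X_i$ is precisely $1$, so in exponential coordinates the horizontal components of $\gamma(s)$ satisfy $z_i(\gamma(s)) = \int_0^s c_i(u)\,du$. In particular, $\int_0^1 c(s)\,ds = w$. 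The fundamental theorem of calculus applied to $s\mapsto f(\gamma(s))$ then yields
\[
f(h) - f(e) = \int_0^1 \langle c(s), \nh f(\gamma(s))\rangle\,ds,
\]
and subtracting the identity $\langle \nh f(e), w\rangle = \int_0^1 \langle c(s), \nh f(e)\rangle\,ds$ produces
\[
f(h) - f(e) - \langle \nh f(e), w\rangle = \int_0^1 \langle c(s), \nh f(\gamma(s)) - \nh f(e)\rangle\,ds.
\]
Taking absolute values bounds the remainder by $\bigl(\sup_s |\nh f(\gamma(s)) - \nh f(e)|\bigr)\cdot \int_0^1 |c(s)|\,ds \leq Cr \sup_s |\nh f(\gamma(s)) - \nh f(e)|$.

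The principal technical obstacle is ensuring that the trace of $\gamma$ lies in a gauge ball $B(e,br)$ for a universal $b>0$, so that the supremum in the right-hand side of the desired inequality dominates the one produced by the proof. This follows from the general fact that a horizontal curve starting at $e$ of Carnot-Carath\'eodory length $L$ is contained in the CC-ball of radius $L$, which in turn sits inside the gauge ball $B(e, c_1^{-1}L)$ by the equivalence recalled in Section \ref{S:prelim}. Choosing an almost length-minimizing $\gamma$ and setting $b := c_1^{-1}C$ closes the argument, with constants depending only on the Carnot group $\bG$ and not on $f$, $g$ or $r$.
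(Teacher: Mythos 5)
Your argument is correct. Note that the paper itself offers no proof of Proposition \ref{P:taylor}: it simply quotes it as a special case of the stratified Taylor inequality, \cite[Theor.\ 1.42]{FS}. What you have written is therefore a self-contained proof of the first-order case, and every step checks out: the left-invariance reduction is legitimate because $X_i\tilde f(h'')=(X_if)(g\circ h'')$ and the gauge is symmetric and left-invariant; the Baker--Campbell--Hausdorff formula \eqref{BCH} does place all commutator corrections in $\bg_2\oplus\cdots\oplus\bg_r$, so the horizontal component of $\log(g^{-1}\circ g')$ is exactly $z'-z$; the triangular form \eqref{Xi} of the $X_i$ gives $\tfrac{d}{ds}z_k(\gamma(s))=c_k(s)$ and hence the key identity $\int_0^1 c(s)\,ds=z'-z$, which is what makes the first-order term drop out after subtraction; and the containment of the trace of an almost-minimizing horizontal curve in $B(e,br)$ follows, as you say, from restricting the curve to $[0,s]$ and using the two-sided equivalence between the gauge and the Carnot--Carath\'eodory distance recalled in Section \ref{S:prelim}. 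The only point worth making explicit is that the curve should be taken Lipschitz (e.g.\ parametrized proportionally to arc length) so that $s\mapsto f(\gamma(s))$ is Lipschitz and the fundamental theorem of calculus applies for $f\in C^1$. Your route differs from the Folland--Stein one, which connects $e$ to $h$ by a quantitatively controlled product of horizontal one-parameter exponentials (their stratified mean value theorem) and is built to handle Taylor expansions of arbitrary homogeneous degree; your use of a single almost-length-minimizing horizontal curve is more economical for the degree-one case needed here, at the price of invoking Chow connectivity rather than the explicit piecewise-exponential construction.
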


\subsection{The heat kernel}\label{SS:heat} The horizontal Laplacian $\mathscr L$ relative to $\{e_1,...,e_m\}$ is defined as in \eqref{L}, and we denote by $P_t f(g) = e^{-t \mathscr L} f(g) = \int_{\bG} p(g,g',t) f(g') dg'$ the corresponding heat semigroup constructed by Folland in \cite{Fo}. Since by \eqref{Xi} we have $X_i^\star = - X_i$, $i=1,...,m$, the heat kernel is symmetric $p(g,g',t)=p(g',g,t)$. Furthermore, it satisfies the following properties. Hereafter, we denote with $e\in \bG$ the identity element.

\begin{proposition}\label{P:prop}
For every $g, g', g''\in \bG$ and $t>0$, one has
\begin{itemize}
\item[(i)] $p(g,g',t)=p(g''\circ g,g''\circ g',t)$;
\item[(ii)] $p(g,e,t)=t^{-\frac{Q}{2}}p(\delta_{1/\sqrt{t}}g,e,1)$;
\item[(iii)] $P_t 1(g) = \int_\bG p(g,g',t) dg'=1$.
\end{itemize}
\end{proposition}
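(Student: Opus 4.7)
The three items all express symmetries of the generator $\mathscr L$, and my plan is to derive each one by combining such a symmetry with the uniqueness of the heat semigroup and the bi-invariance of the Haar measure $dg$ on $\bG$.

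For (i), I would first observe that, by the Lie formula \eqref{lie}, each $X_i$ is left-invariant, so $\mathscr L$ commutes with the left-translation operator $\tau_{g''}f(g)=f(g''\circ g)$. By uniqueness of bounded solutions of the Cauchy problem for $\partial_t+\mathscr L$ (guaranteed by the Gaussian bounds of the forthcoming Proposition \ref{P:gaussian}), the semigroup $P_t$ commutes with $\tau_{g''}$ as well. Writing out $P_t(\tau_{g''}f)(g)=\tau_{g''}(P_tf)(g)$ in integral form gives
\[
\int_\bG p(g,g',t)\,f(g''\circ g')\,dg' \;=\; \int_\bG p(g''\circ g,g',t)\,f(g')\,dg',
\]
and performing in the left-hand side the change of variable $g'\mapsto (g'')^{-1}\circ g'$, which is allowed by the bi-invariance of $dg$, produces, after relabelling, the identity in (i).

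For (ii), the representation \eqref{Xi} together with the definition \eqref{dilG} shows that each $X_i$ is homogeneous of degree $1$, i.e.\ $X_i(f\circ\delta_\lambda)=\lambda(X_if)\circ\delta_\lambda$; hence $\mathscr L(f\circ\delta_\lambda)=\lambda^2(\mathscr Lf)\circ\delta_\lambda$, and by the same uniqueness argument $P_t(f\circ\delta_\lambda)=(P_{\lambda^2t}f)\circ\delta_\lambda$. Rewriting this in integral form and changing variables via $(d\circ\delta_\lambda)(g)=\lambda^Qdg$ yields the full scaling identity
\[
p(\delta_\lambda g,\delta_\lambda g',\lambda^2 t)=\lambda^{-Q}p(g,g',t).
\]
Setting $g'=e$ (fixed by $\delta_\lambda$) and choosing $\lambda=1/\sqrt t$ is exactly (ii).

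For (iii), I would combine (i) with the symmetry of $p$ and bi-invariance of $dg$ to get $P_t1(g)=\int_\bG p(e,h,t)\,dh$, which is independent of $g$. Then I would apply the scaling from (ii) and change variables $k=\delta_{1/\sqrt t}h$ to see that this integral is also independent of $t$; thus $P_t1\equiv c$ with $c=\int_\bG p(e,k,1)\,dk$. The truly non-formal step is the normalisation $c=1$: this is stochastic completeness, which is built into Folland's construction in \cite{Fo}, or alternatively can be obtained by integrating the heat equation in $g'$ after justifying the interchange of derivatives and integrals through the Gaussian bound of Proposition \ref{P:gaussian} and the polynomial volume growth of $\bG$. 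I expect this last normalisation to be the only point requiring external input; everything else is a direct transcription of the left-invariance and the dilation-homogeneity of $\mathscr L$.
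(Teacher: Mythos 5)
Your proposal is correct, and it is worth noting that the paper itself offers no proof of this proposition: items (i)--(iii) are quoted as standard facts from Folland's construction \cite{Fo}, where the semigroup is built as a group convolution, $P_tf=f*h_t$ with $p(g,g',t)=h_t((g')^{-1}\circ g)$ and $h_t=t^{-Q/2}h_1\circ\delta_{1/\sqrt t}$, from which (i) and (ii) are immediate and (iii) is the stated stochastic completeness. Your alternative derivation through the commutation of $\mathscr L$ with left translations and with dilations, followed by uniqueness of bounded solutions of the Cauchy problem, is sound; the one point you should make explicit is that the uniqueness being invoked is for the \emph{degenerate} operator $\mathscr L-\partial_t$ on $\bG$ (not the Euclidean heat operator), which is legitimate here via the parabolic maximum principle together with the Gaussian bounds of Proposition \ref{P:gaussian}, but is not entirely free. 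Two small remarks: the relevant evolution operator in the paper's convention is $\mathscr L-\partial_t$, not $\partial_t+\mathscr L$; and the final normalisation $c=1$ in (iii) can be obtained without any external input from \cite{Fo}: once you know $P_t1\equiv c$ with $c$ independent of $g$ and $t$, the semigroup property gives $c=P_{2t}1=P_t(P_t1)=P_t(c)=c^2$, and the lower Gaussian bound in \eqref{gauss0} forces $c>0$, hence $c=1$. This closes the only step you flagged as requiring outside justification.
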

In addition, one has the following basic Gaussian estimates, see \cite{JS, VSC}. Such estimates play a ubiquitous role in the present work. 

\begin{proposition}\label{P:gaussian}
There exist universal constants $\alpha, \beta>0$ and $C>1$ such that for every $g, g' \in \bG$, $t > 0$, and $j\in\{1,\ldots,m\}$
\begin{equation}\label{gauss0}
\frac{C^{-1}}{t^{\frac Q2}} \exp \bigg(-\alpha\frac{|(g')^{-1}\circ g|^2}{t}\bigg)\leq p(g,g',t) \leq \frac{C}{t^{\frac Q2 }} \exp \bigg(-\beta\frac{ |(g')^{-1}\circ g|^2}{t}\bigg),
\end{equation}
\begin{equation}\label{gauss1}
\left|X_{j}p(g,g',t)\right|\ \leq\ \frac{C}{t^{\frac{Q+1}{2}}} \exp \bigg(-\beta\frac{ |(g')^{-1}\circ g|^2}{t}\bigg),
\end{equation}
\begin{equation}\label{gauss2}
\left|X^2_{j}p(g,g',t)\right| + \left|\partial_t p(g,g',t)\right|\ \leq\  \frac{C}{t^{\frac Q2 +1}} \exp \bigg(-\beta\frac{ |(g')^{-1}\circ g|^2}{t}\bigg).
\end{equation}
\end{proposition}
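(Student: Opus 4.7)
The plan is to follow the classical two-step strategy developed by Jerison--S\'anchez-Calle and Varopoulos--Saloff-Coste--Coulhon: establish first a Gaussian \emph{upper} bound by combining ultracontractivity with Davies' perturbation method, then obtain the matching \emph{lower} bound via an integrated Harnack/chaining argument, and finally derive the derivative estimates by subelliptic parabolic regularity. The starting ingredient is the Folland--Stein Sobolev embedding $\|f\|_{L^{2Q/(Q-2)}(\bG)}\le C\|\nh f\|_{L^2(\bG)}$, together with the positivity, self-adjointness and conservativity of $P_t$ provided by Proposition \ref{P:prop}.

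First, I would run Nash's argument: differentiating $\|P_t f\|_2^2$ and using Sobolev on $P_t f$ yields the ultracontractive bound $\|P_t\|_{L^1\to L^\infty}\le C t^{-Q/2}$, which by duality and symmetry of the kernel implies $p(g,g',t)\le C t^{-Q/2}$ for all $g,g',t$. To upgrade this to the Gaussian decay in \eqref{gauss0}, I would invoke Davies' trick: for any Lipschitz $\psi$ with $|\nh\psi|\le 1$, the conjugated semigroup $e^{-\la\psi}P_t e^{\la\psi}$ has $L^2\to L^2$ norm bounded by $e^{t\la^2}$ (from an energy identity using the chain rule for $\mathscr L$). Interpolating with the ultracontractive estimate yields
\[
p(g,g',t)\le C t^{-Q/2}\exp\!\big(\la(\psi(g')-\psi(g))+t\la^2\big),
\]
and taking the supremum over admissible $\psi$ (which, by \eqref{d}, reproduces $d(g,g')$) together with optimization in $\la$ produces the upper bound in \eqref{gauss0}, after invoking the equivalence $d(g,g')\asymp|(g')^{-1}\circ g|$ recalled in the preliminaries.

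Second, for the lower bound I would combine the conservativity $P_t 1=1$ with the just-proved upper bound and the polynomial volume growth $|B(g,r)|\asymp r^Q$ to deduce an on-diagonal lower bound $p(g,g,t)\ge c t^{-Q/2}$. Propagating it off-diagonal is the main technical obstacle: here one needs the parabolic Harnack inequality for $\de_t+\mathscr L$ (which in the subelliptic setting requires the Jerison Poincar\'e inequality and doubling, both of which hold in $\bG$), or alternatively the chaining argument of VSC, covering a minimizing sub-Riemannian curve between $g$ and $g'$ by $\lceil d(g,g')^2/t\rceil$ small balls and iterating the local lower bound. Either route yields the Gaussian lower bound with the (in general larger) constant $\alpha$.

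Finally, for \eqref{gauss1}--\eqref{gauss2} I would combine the upper bound \eqref{gauss0} with Caccioppoli-type estimates for the subelliptic parabolic equation $(\de_t+\mathscr L)p(\cdot,g',\cdot)=0$ on cylinders $B(g,\sqrt t)\times(t/2,t)$. Standard interior regularity for H\"ormander operators (or the Moser iteration in the subelliptic setting) converts an $L^\infty$ bound on $p$ into an $L^\infty$ bound on $X_j p$ with an additional factor $t^{-1/2}$, and into a bound on $X_j^2 p$ and $\de_t p$ with an additional factor $t^{-1}$; the exponential Gaussian weight survives this procedure by a standard localization with cutoff functions supported in balls of radius $\tfrac12 |(g')^{-1}\circ g|$. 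The main difficulty throughout is the lower bound in Step 2, where the failure of any symmetry of $p$ forces one to rely exclusively on the abstract Harnack/chaining machinery rather than explicit computation.
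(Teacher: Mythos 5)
The paper does not prove this proposition: it is quoted directly from the literature (Jerison--S\'anchez-Calle and Varopoulos--Saloff-Coste--Coulhon, the cited references \cite{JS, VSC}), and your outline — Nash ultracontractivity plus Davies' perturbation for the upper bound, on-diagonal lower bound propagated by Harnack/chaining, and subelliptic parabolic regularity on scaled cylinders for the derivative estimates — is precisely the standard argument carried out in those references. Your sketch is correct and consistent with the paper's (cited) proof.
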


We next introduce the relevant functional spaces for the present work. If $1\le p<\infty$, the Folland-Stein Sobolev space of order one is $W^{1,p}(\bG) = \{f\in L^p(\bG)\mid X_i f\in L^p(\bG), i=1,...,m\}$. Endowed with the norm 
\[
||f||_{W^{1,p}(\bG)} = ||f||_{L^p(\bG)} + ||\nh f||_{L^p(\bG)}
\]
this is a Banach space, which is reflexive when $p>1$. Such latter property will be used in Lemma \ref{L:inf} below. We will need the following approximation property
\begin{equation}\label{dense}
C^\infty_0(\bG)\ \text{is dense in}\ W^{1,p}(\bG)\quad\ \ \ \ \ 1\le p<\infty,
\end{equation}
see \cite[Theor. 4.5]{Fo} and also \cite[Theor. A.2]{GN}. 

\subsection{Bounded variation and coarea}\label{SS:bv}
In connection with Theorem \ref{T:p1} we need to recall the space of functions with horizontal bounded variation introduced in \cite{CDG}. Let $\mathscr F = \{\zeta = (\zeta_1,...,\zeta_m)\in C^1_0(\bG,\Rm)\mid ||\zeta||_\infty = \underset{g\in \bG}{\sup} (\sum_{i=1}^m \zeta_i(g)^2)^{1/2} \le 1\}$. Then $BV(\bG) = \{f\in L^1(\bG)\mid \operatorname{Var}_\bG(f)<\infty\}$, where
\begin{equation}\label{var}
\operatorname{Var}_\bG(f) = \underset{\zeta\in \mathscr F}{\sup} \int_{\bG} f \sum_{i=1}^m X_i \zeta_i  dg.
\end{equation}
Endowed with the norm $||f||_{L^1(\bG)} + \operatorname{Var}_\bG(f)$ the space $BV(\bG)$ is a Banach space. When $f\in W^{1,1}(\bG)$, then $f\in BV(\bG)$ and one has
$\operatorname{Var}_\bG(f) = \int_{\bG} |\nh f| dg$,
but the inclusion is strict. In some formulas we indicate with $d\operatorname{Var}_\bG(f)$ the differential of horizontal total variation associated with a function $f\in BV(\bG)$, so that $\operatorname{Var}_\bG(f) = \int_\bG d\operatorname{Var}_\bG(f)(g)$. It is well-known that there exists a $\operatorname{Var}_\bG(f)$-measurable function $\sigma_f:\bG\to \Rm$, such that $||\sigma_f(g)|| = 1$ for $\operatorname{Var}_\bG(f)$-a.e. $g\in \bG$, and for which one has for any $\zeta\in \mathscr F$
\begin{equation}\label{sigmaf}
\int_{\bG} f \sum_{i=1}^m X_i \zeta_i  dg = \int_\bG \langle\sigma_f(g),\zeta(g)\rangle d\operatorname{Var}_\bG(f)(g).
\end{equation}

A measurable set $E\subset \bG$ is said to have finite $\bG$-perimeter if $1_E\in BV(\bG)$, and in this case we denote $P_\bG(E) = \operatorname{Var}_\bG(1_E)$. The following two basic results are special cases of respectively \cite[Theor. 1.14]{GN} and \cite[Theor. 5.2]{GN}.

\begin{theorem}\label{T:dense}
Given $f\in BV(\bG)$, there exists $f_k\in C^\infty_0(\bG)$ such that as $k\to \infty$ one has
$f_k \longrightarrow f$ in $L^1(\bG)$, and $\int_{\bG} |\nh f_k| dg \longrightarrow \operatorname{Var}_\bG(f)$.
\end{theorem}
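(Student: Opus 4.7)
The strategy is the classical Meyers–Serrin style approach adapted to the Carnot group setting: first reduce to the compactly supported case by an anisotropic cutoff, then mollify by group convolution with a rescaled bump, and finally combine a direct variation estimate for the mollification with lower semicontinuity of $\operatorname{Var}_\bG$ under $L^1$ convergence.

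\textbf{Step 1 (cutoff to compact support).} Fix $\chi\in C^\infty_0(\bG)$ with $0\le\chi\le 1$, $\chi\equiv 1$ on $B(e,1)$ and $\operatorname{supp}\chi\subset B(e,2)$, and set $\chi_k(g)=\chi(\delta_{1/k}g)$. Because $X_i$ has homogeneity degree one under $\delta_\lambda$, one gets $|\nh\chi_k|(g)\le C/k$ uniformly in $g$. Distributional Leibniz then yields $\chi_k f\in BV(\bG)$ with
\[
\operatorname{Var}_\bG(\chi_k f)\le \int_\bG \chi_k\,d\operatorname{Var}_\bG(f)+\int_\bG |f|\,|\nh\chi_k|\,dg.
\]
The first term tends to $\operatorname{Var}_\bG(f)$ by dominated convergence ($\chi_k\to 1$ pointwise, bounded by $1$) and the second is bounded by $(C/k)\|f\|_{L^1}\to 0$. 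Together with $\chi_k f\to f$ in $L^1$, lower semicontinuity of $\operatorname{Var}_\bG$ gives $\operatorname{Var}_\bG(\chi_k f)\to \operatorname{Var}_\bG(f)$. So it suffices to prove the statement for $f\in BV(\bG)$ with compact support.

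\textbf{Step 2 (mollification).} Pick $\phi\in C^\infty_0(\bG)$, $\phi\ge 0$, $\int\phi\,dg=1$, $\operatorname{supp}\phi\subset B(e,1)$, and set $\phi_\varepsilon(g)=\varepsilon^{-Q}\phi(\delta_{1/\varepsilon}g)$. Define the \emph{left} convolution $f_\varepsilon(g)=(\phi_\varepsilon*f)(g)=\int_\bG \phi_\varepsilon(h)f(h^{-1}g)\,dh$. Since $f$ has compact support, $f_\varepsilon\in C^\infty_0(\bG)$, and standard arguments give $f_\varepsilon\to f$ in $L^1(\bG)$ as $\varepsilon\to 0^+$. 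The crucial point is to show
\[
\int_\bG |\nh f_\varepsilon|\,dg\ =\ \operatorname{Var}_\bG(f_\varepsilon)\ \le\ \operatorname{Var}_\bG(f) \qquad\text{for all }\varepsilon>0.
\]
For $\zeta=(\zeta_1,\dots,\zeta_m)\in\mathscr F$, Fubini and the substitution $u=h^{-1}g$ yield
\[
\int_\bG f_\varepsilon \sum_{i=1}^m X_i\zeta_i\,dg\ =\ \int_\bG \phi_\varepsilon(h)\left[\int_\bG f(u)\sum_{i=1}^m (X_i\zeta_i)(hu)\,du\right]dh.
\]
By left invariance of $X_i$, the inner bracket equals $\int_\bG f\sum_i X_i\zeta_{i,h}\,du$, where $\zeta_{i,h}(u):=\zeta_i(hu)$. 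The translate $\zeta_h=(\zeta_{1,h},\dots,\zeta_{m,h})$ again lies in $\mathscr F$ (compactly supported $C^1$ with $\|\zeta_h\|_\infty\le 1$), hence the inner integral is bounded by $\operatorname{Var}_\bG(f)$. Since $\int\phi_\varepsilon=1$, taking the supremum over $\zeta\in\mathscr F$ gives the desired estimate.

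\textbf{Step 3 (conclusion).} Applying the lower semicontinuity of $\operatorname{Var}_\bG(\cdot)$ under $L^1$ convergence (an immediate consequence of the dual definition \eqref{var}) and the upper bound from Step 2, we obtain
\[
\operatorname{Var}_\bG(f)\ \le\ \liminf_{\varepsilon\to 0^+}\int_\bG |\nh f_\varepsilon|\,dg\ \le\ \limsup_{\varepsilon\to 0^+}\int_\bG |\nh f_\varepsilon|\,dg\ \le\ \operatorname{Var}_\bG(f).
\]
Choosing a sequence $\varepsilon_k\to 0$ and combining with Step 1 via a diagonal argument produces the claimed sequence $f_k\in C^\infty_0(\bG)$. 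The main technical obstacle is Step 2, namely showing that the mollification does not increase the horizontal total variation; this is delicate in the non-commutative setting because one must be careful with the choice of convolution so that left-invariant derivatives land on the admissible test vector fields, and use left invariance to verify that the translated field $\zeta_h$ still belongs to $\mathscr F$. Everything else is a routine adaptation of the Euclidean template.
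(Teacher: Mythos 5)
Your proof is correct. Note, however, that the paper does not actually prove Theorem \ref{T:dense}: it is quoted as a special case of \cite[Theor. 1.14]{GN}, so there is no internal argument to compare against. Your self-contained proof follows the classical Anzellotti--Giaquinta/Meyers--Serrin template, and the two genuinely non-Euclidean points are handled properly: the anisotropic cutoff $\chi(\delta_{1/k}\cdot)$ has $|\nh\chi_k|\le C/k$ because the $X_i$ are $\delta_\lambda$-homogeneous of degree one, and the group mollification does not increase the horizontal variation because, after Fubini and the (bi-invariant Haar) substitution $u=h^{-1}g$, left-invariance of the $X_i$ lets you rewrite $(X_i\zeta_i)(hu)$ as $X_i(\zeta_i\circ L_h)(u)$ with the translated field still in $\mathscr F$. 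Two small points you gloss over but which are routine: the Leibniz bound in Step 1 uses the polar decomposition \eqref{sigmaf} to produce the term $\int_\bG\chi_k\,d\operatorname{Var}_\bG(f)$, and the identity $\operatorname{Var}_\bG(f_\varepsilon)=\int_\bG|\nh f_\varepsilon|\,dg$ for smooth compactly supported $f_\varepsilon$ is the stated coincidence of the two notions on $W^{1,1}(\bG)$. With those remarks your argument is complete and could stand in place of the citation.
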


\begin{theorem}\label{T:coarea}
Let $f\in BV(\bG)$. Then $P_\bG(E^f_\tau)<\infty$ for a.e. $\tau \in \R$ and
\begin{equation}\label{coarea}
\operatorname{Var}_\bG(f) = \int_\R P_\bG(E^f_\tau) d\tau,
\end{equation}
where we have let
$E^f_\tau=\{g\in\bG\mid f(g)>\tau\}$.  Vice-versa, if $f\in L^1(\bG)$ and the right-hand side of \eqref{coarea} is finite, then $f\in BV(\bG)$.
\end{theorem}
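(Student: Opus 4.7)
The plan is to establish the coarea formula via two separate inequalities and handle the converse implication together with the ``$\le$'' direction. For the upper bound I would exploit the layer-cake decomposition: writing $f=f^+-f^-$, one has a.e.
\[
f(g) = \int_0^\infty 1_{E^f_\tau}(g)\, d\tau - \int_{-\infty}^0\!\left(1 - 1_{E^f_\tau}(g)\right) d\tau.
\]
For every $\zeta=(\zeta_1,\dots,\zeta_m)\in\mathscr F$, Fubini's theorem gives
\[
\int_\bG f \sum_{i=1}^m X_i \zeta_i\, dg = \int_\R \int_\bG 1_{E^f_\tau}(g) \sum_{i=1}^m X_i \zeta_i(g)\, dg\, d\tau \leq \int_\R P_\bG(E^f_\tau)\, d\tau,
\]
where the final inequality uses the definition \eqref{var} applied to each $1_{E^f_\tau}$. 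Taking the supremum over $\zeta\in\mathscr F$ yields simultaneously $\operatorname{Var}_\bG(f)\le \int_\R P_\bG(E^f_\tau)d\tau$ and the converse implication in the statement. The measurability of $\tau\mapsto P_\bG(E^f_\tau)$ follows from the $L^1$-lower semicontinuity of the perimeter functional $P_\bG$ and a standard approximation of $1_{E^f_\tau}$ by simple functions of $f$.

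For the reverse inequality I would first prove the equality for $f\in C^\infty_0(\bG)$ and then extend via Theorem \ref{T:dense}. For smooth $f$, Sard's theorem guarantees that $\partial E^f_\tau=\{f=\tau\}$ is a smooth hypersurface for a.e.\ $\tau\in\R$, with Riemannian outer unit normal $\nu=-\nabla f/|\nabla f|_R$, where $|\nabla f|_R$ denotes the norm of the full Riemannian gradient relative to the fixed left-invariant metric. A direct divergence-theorem computation then identifies
\[
P_\bG(E^f_\tau) = \int_{\{f=\tau\}} \frac{|\nabla_H f|}{|\nabla f|_R}\, d\mathscr H^{N-1}\qquad\mbox{for a.e.\ }\tau,
\]
and the classical Riemannian coarea formula yields
\[
\int_\bG |\nabla_H f|\, dg = \int_\bG \frac{|\nabla_H f|}{|\nabla f|_R}|\nabla f|_R\, dg = \int_\R \int_{\{f=\tau\}} \frac{|\nabla_H f|}{|\nabla f|_R}\, d\mathscr H^{N-1}\, d\tau.
\]
Combining with $\operatorname{Var}_\bG(f)=\int_\bG |\nabla_H f|dg$ for smooth $f$ proves the equality in the smooth case.

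To pass to $f\in BV(\bG)$, I would apply Theorem \ref{T:dense} to pick $f_k\in C^\infty_0(\bG)$ with $f_k\to f$ in $L^1(\bG)$ and $\int_\bG |\nabla_H f_k|dg \to \operatorname{Var}_\bG(f)$. Fubini shows $1_{E^{f_k}_\tau}\to 1_{E^f_\tau}$ in $L^1(\bG)$ for a.e.\ $\tau\in\R$, up to passing to a subsequence. The $L^1$-lower semicontinuity of the horizontal perimeter then gives $P_\bG(E^f_\tau)\le \liminf_k P_\bG(E^{f_k}_\tau)$ for a.e.\ $\tau$, and Fatou produces
\[
\int_\R P_\bG(E^f_\tau)\, d\tau \le \liminf_{k\to\infty} \int_\R P_\bG(E^{f_k}_\tau)\, d\tau = \liminf_{k\to\infty} \int_\bG |\nabla_H f_k|\, dg = \operatorname{Var}_\bG(f),
\]
which, matched with the opposite inequality from the first paragraph, closes the argument.

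The main obstacle I expect is the representation $P_\bG(E^f_\tau)=\int_{\{f=\tau\}}|\nabla_H f|/|\nabla f|_R \, d\mathscr H^{N-1}$ for smooth level sets, which demands a careful integration-by-parts combined with a density argument showing that the supremum in \eqref{var} is realised along tangent approximations of the horizontally projected Riemannian normal $\pi_H\nu$. The subsidiary technical issue is the convergence $1_{E^{f_k}_\tau}\to 1_{E^f_\tau}$ in $L^1$: this fails at the (at most countable) set of $\tau$'s for which $f$ has a level set of positive Haar measure, so some care with a slicing/translation argument in $\tau$ is needed to ensure Fatou's lemma applies on a full-measure set.
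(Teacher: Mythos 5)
The paper does not prove this statement: it is quoted verbatim as a special case of \cite[Theor. 5.2]{GN}, so there is no internal proof to compare against. Your argument is correct and is essentially the standard proof found in that reference: the ``$\le$'' inequality and the converse implication via the layer-cake formula, Fubini and the definition \eqref{var} (note that the constant $1$ in the negative-$\tau$ part integrates to zero against $\sum_i X_i\zeta_i$ since $\zeta\in C^1_0$), and the ``$\ge$'' inequality by combining the representation $P_\bG(E^f_\tau)=\int_{\{f=\tau\}}|\nabla_H f|/|\nabla f|_R\,d\mathscr H^{N-1}$ for regular values of a smooth $f$ with the classical coarea formula, then passing to general $f\in BV(\bG)$ through Theorem \ref{T:dense}, lower semicontinuity of the perimeter and Fatou. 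One small remark: your final worry about level sets of positive measure is unnecessary, since the identity $\int_\R|1_{E^{f_k}_\tau}(g)-1_{E^f_\tau}(g)|\,d\tau=|f_k(g)-f(g)|$ (this is \eqref{riesz}) gives $L^1(\R)$-convergence of $\tau\mapsto\|1_{E^{f_k}_\tau}-1_{E^f_\tau}\|_{L^1(\bG)}$ and hence a.e. convergence along a subsequence with no exceptional set to worry about.
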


Let $E\subset \bG$ be a set having finite perimeter. We denote by $\p^\star E$ its reduced boundary, see \cite[Def. 2.8]{BMP}. Given a point $g\in \p^\star E$ we indicate by $\nu_E(g)$ the measure theoretic horizontal normal at $g$. Also, given a point $g_0\in \bG$ we let $E_{g_0,r} = \delta_{1/r} L_{g_0^{-1}}(E)$. The vertical planes and half-spaces associated with a unit vector $\nu\in \Rm$ are respectively defined by
\[
T_\bG(\nu) = \{(z,\sigma)\in \bG\mid \langle z,\nu\rangle = 0\},\ \ \ \ S_\bG^+(\nu) = \{(z,\sigma)\in \bG\mid \langle z,\nu\rangle \ge 0\}.
\]  
We denote by $T_\nu$ the $(m-1)$-dimensional subspace of $\R^m$ defined as $\left(\rm{span}\{\nu\}\right)^{\perp}$, and indicate with $P_\nu:\Rm\to \Rm$ the orthogonal projection onto $T_\nu$, i.e. $P_\nu z=z-<z,\nu>\nu$. Clearly, its range is  $R(P_\nu) =T_\nu$, and we have $P_\nu^2=P_\nu=P_\nu^*$. One has
\begin{equation}\label{Tnu}
T_\bG(\nu)=T_\nu\times\R^{N-m}=\{(\hat z,\sigma)\in \bG\mid \hat z\in\R^{m},\ \sigma\in \R^{N-m}, \mbox{ such that } P_\nu \hat z=\hat z\}.
\end{equation}

\begin{definition}\label{D:B}
We say that a Carnot group $\bG$ satisfies the property \emph{(B)} if for every set of finite perimeter $E\subset \bG$, and for every $g_0\in \p^\star E$, one has in $L^1_{loc}(\bG)$
\[
1_{E_{g_0,r}}\ \longrightarrow\ 1_{S_\bG^+(\nu_E(g_0))}\quad\mbox{ as }r\to 0^+.
\]
\end{definition}

It is presently not known whether every Carnot group satisfies the property (B), but in \cite[Theor. 3.1]{FSS} it was shown that groups of step two possess this property. The paper \cite{Ma} contains a class of groups, which includes Carnot groups of step two, that satisfy the property (B) (see in this respect \cite[Theor. 4.12]{Ma}). In the proof of Theorem \ref{T:p1} we will need the following result, which is \cite[Theor. 2.14]{BMP}. We stress that although in that paper the authors assume that $\bG$ has step 2, as they themselves emphasise their result continue to hold in any Carnot group with the property (B). 
 
\begin{theorem}\label{T:bmpB}
Let $\bG$ be a Carnot group satisfying property \emph{(B)}. If $f\in BV(\bG)$ one has
\[
\underset{t\to 0^+}{\lim} \frac{1}{\sqrt t} \int_{\bG} P_t(|f - f(g)|)(g) dg = 4 \int_{\bG} \int_{T_\bG(\sigma_f(g))} p(g',e,1) dg' d \operatorname{Var}_\bG(f)(g),
\]
where $\sigma_f$ is the function defined by \eqref{sigmaf} above.
\end{theorem}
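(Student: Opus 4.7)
My strategy is to reduce first to the characteristic-function case via layer-cake and coarea, and then prove the characteristic-function case by a scaling/blow-up analysis rooted in Property (B).

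For $f\in BV(\bG)$, the layer-cake identity $|f(g)-f(g')|=\int_\R |1_{E^f_\tau}(g)-1_{E^f_\tau}(g')|\,d\tau$ and Fubini give
\[
\int_\bG P_t(|f-f(g)|)(g)\,dg=\int_\R\int_\bG P_t(|1_{E^f_\tau}-1_{E^f_\tau}(g)|)(g)\,dg\,d\tau.
\]
Once Theorem \ref{T:bmpB} is known for $f=1_E$ with $E$ of finite perimeter, Theorem \ref{T:coarea} together with the standard identification $\sigma_f(g)=\nu_{E^f_{f(g)}}(g)$ at $\operatorname{Var}_\bG(f)$-a.e.\ $g$ yields the general statement. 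The interchange of limit and $\tau$-integration is justified by a uniform bound $\tfrac{1}{\sqrt t}\int_\bG P_t(|1_E-1_E(g)|)(g)\,dg\le C\,P_\bG(E)$, which one obtains from the Gaussian estimates of Proposition \ref{P:gaussian} applied to smooth approximants (Theorem \ref{T:dense}).

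For $f=1_E$, symmetry of $p$ gives $\int_\bG P_t(|1_E-1_E(g)|)(g)\,dg=2\int_\bG\int_\bG 1_E(g)1_{E^c}(g')p(g,g',t)\,dg'\,dg$. Using the left-invariance and the scaling $p(g,g',t)=t^{-Q/2}p(\delta_{1/\sqrt t}((g')^{-1}\circ g),e,1)$ from Proposition \ref{P:prop}, the change of variables $g'=g\circ\delta_{\sqrt t}(y)$ recasts this as $2\int_\bG p(y,e,1)\,\Phi_t(y)\,dy$, with $\Phi_t(y):=|\{g\in E:g\circ\delta_{\sqrt t}(y)\in E^c\}|$. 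The heart of the argument is the blow-up identity
\[
\lim_{t\to 0^+}\frac{\Phi_t(y)}{\sqrt t}=\int_{\p^\star E}\langle y_1,\nu_E(g)\rangle^+\,dP_\bG(E)(g),
\]
where $y_1\in\bg_1$ is the horizontal component of $\exp^{-1}y$. For smooth $f$ approximating $1_E$ this follows from a Taylor expansion: the Baker--Campbell--Hausdorff formula \eqref{BCH} gives $f(g\circ\delta_{\sqrt t}(y))-f(g)=\sqrt t\langle y_1,\nabla_H f(g)\rangle+O(t)$ uniformly on compacts (cf.\ Proposition \ref{P:taylor}), and the positive-part version of this formula yields the claim for smooth $f$. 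To transfer the identity to $f=1_E$ one uses Theorem \ref{T:dense} combined with Property (B), which at $P_\bG(E)$-a.e.\ $g_0\in\p^\star E$ provides the half-space blow-up $\delta_{1/\sqrt t}(g_0^{-1}\circ E)\to S_\bG^+(\nu_E(g_0))$ in $L^1_{loc}$; on the half-space the computation is direct.

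Combining the previous steps and swapping the integrations, the limit equals $2\int_{\p^\star E}\bigl(\int_\bG p(y,e,1)\langle y_1,\nu_E(g)\rangle^+dy\bigr)dP_\bG(E)(g)$. To recover the stated form one invokes the identity
\[
\int_\bG p(y,e,1)\,\langle y_1,\nu\rangle^+\,dy=2\int_{T_\bG(\nu)}p(y',e,1)\,dy'
\]
for every unit $\nu\in\Rm$; by the $y\mapsto y^{-1}$ symmetry of $p(\cdot,e,1)$ this reduces to the claim that the one-dimensional horizontal marginal of $p(\cdot,e,1)$ along any direction is the standard Gaussian $(4\pi)^{-1/2}e^{-s^2/4}$---an \emph{integral decoupling} feature of sub-Riemannian heat kernels (the phenomenon behind Theorem \ref{T:int}); in step-two Carnot groups it is a direct consequence of the Gaveau--Hulanicki--Cygan representation. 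The principal obstacle is the blow-up identification above: the absence of explicit formulas and of any symmetry for $p$ forces a global handling of the $y$-integral, and the passage from smooth functions to $f=1_E$ must be synchronised with the weak convergence of rescaled sets from Property (B), uniformly in $y$ and compatibly with the Gaussian decay of $p$.
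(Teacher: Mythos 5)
A preliminary remark: the paper does not prove this statement at all --- Theorem \ref{T:bmpB} is imported verbatim from \cite[Theor. 2.14]{BMP}, with the observation that the step-two hypothesis there can be replaced by property (B). Your proposal must therefore be judged on its own merits, and its architecture is indeed the right one (and essentially that of \cite{BMP}): reduction to $f=1_E$ by layer-cake and coarea, the rewriting $\tfrac{1}{\sqrt t}\int_\bG P_t(|1_E-1_E(g)|)(g)\,dg=\tfrac{2}{\sqrt t}\int_\bG p(y,e,1)\Phi_t(y)\,dy$ via left-invariance and parabolic scaling, and a blow-up analysis of $\Phi_t(y)/\sqrt t$. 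Your closing identity $\int_\bG p(y,e,1)\langle y_1,\nu\rangle^+dy=2\int_{T_\bG(\nu)}p(y',e,1)\,dy'$ is correct and is exactly the decoupling of Theorem \ref{T:int} (both sides equal $1/\sqrt\pi$), and the $y\mapsto y^{-1}$ symmetry disposes of the sign of $\langle y_1,\nu\rangle^{\pm}$; the constants are consistent with \eqref{2uno}.

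The genuine gap is the blow-up identity $\lim_{t\to0^+}\Phi_t(y)/\sqrt t=\int_{\p^\star E}\langle y_1,\nu_E(g)\rangle^+\,dP_\bG(E)(g)$, which carries the entire content of the theorem and which you do not actually prove: you reduce it to ``Theorem \ref{T:dense} combined with Property (B)'', and these two tools do not combine as claimed. Strict approximation $f_k\to 1_E$ ($L^1$ convergence plus convergence of total variations) does not allow you to interchange $\lim_k$ with $\lim_{t\to0^+}$; it yields at best one-sided ($\limsup$) information. Property (B) gives only the pointwise $L^1_{loc}$ convergence $E_{g_0,r}\to S^+_\bG(\nu_E(g_0))$ at each $g_0\in\p^\star E$; to convert this into the integral identity one must show that the measures $\mu_t^y=\tfrac{1}{\sqrt t}\,1_E(g)\,1_{E^c}(g\circ\delta_{\sqrt t}y)\,dg$ have uniformly bounded mass, that every weak-$\star$ limit is absolutely continuous with respect to $P_\bG(E)$, and that its density at $P_\bG(E)$-a.e.\ point is computed by the half-space blow-up. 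This requires the asymptotic Ahlfors regularity of the perimeter measure on balls and a Besicovitch-type differentiation argument --- the De Giorgi-type structure theory available in groups with property (B) through \cite{FSS, Ma} --- and it is precisely this machinery that occupies the proof in \cite{BMP}. A smaller point: the domination needed to pass the limit inside $\int_\bG p(y,e,1)\tfrac{\Phi_t(y)}{\sqrt t}\,dy$ should be made explicit, e.g.\ via $\Phi_t(y)\le C\sqrt t\,|y|\,P_\bG(E)$ obtained by joining $e$ to $\delta_{\sqrt t}y$ with a horizontal path; your Taylor remainder $O(t)$ is uniform on compacta in $g$ but not in $y$, so it cannot serve this purpose by itself.
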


%%%%%%%%%%%%%%%%%%%%%%%%%%%%%%%%

\section{Preparatory results}\label{S:prep}

This section contains some preparatory results that will be used in the main body of the paper. 

\subsection{A basic preliminary estimate}\label{SS:basic} We begin with a useful consequence of the property \eqref{dense} and of the upper Gaussian estimate in \eqref{gauss0}.

\begin{lemma}\label{L:diffquot}
Let $1\le p<\infty$. There exists a universal constant $C_p>0$ such that, for all $f\in W^{1,p}(\bG)$ and $t>0$, one has
\begin{equation}\label{suppose}
 t^{-\frac{p}{2}} \int_{\bG} P_t\left(|f - f(g)|^p\right)(g) dg \le C_p \int_{\bG} |\nh f(g)|^p dg.
\end{equation}
\end{lemma}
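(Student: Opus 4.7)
My plan is to first reduce to $f\in C^\infty_0(\bG)$ via the density property \eqref{dense}: choosing $f_k\in C^\infty_0(\bG)$ with $f_k\to f$ in $W^{1,p}(\bG)$ (hence a.e.\ along a subsequence), Fatou's lemma on the left-hand side of \eqref{suppose} for $f_k$, combined with $L^p$-convergence of $\nh f_k$ on the right, yields the estimate for $f$. Thus one may assume $f\in C^\infty_0(\bG)$.

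The key reformulation uses the left-invariance property in Proposition \ref{P:prop}(i) with $g''=g^{-1}$, the change of variables $g'=g\circ h$, and Fubini, yielding
\begin{equation*}
\int_{\bG} P_t(|f-f(g)|^p)(g)\,dg \;=\; \int_{\bG} p(e,h,t) \Bigl(\int_{\bG} |f(g\circ h)-f(g)|^p\,dg\Bigr) dh.
\end{equation*}
The main substantial step is then the Carnot-group $L^p$ translation estimate
\begin{equation*}
\int_{\bG} |f(g\circ h)-f(g)|^p\,dg \;\le\; d(e,h)^p \int_{\bG} |\nh f(g)|^p\,dg, \qquad h\in\bG.
\end{equation*}
To prove this, for every $\varepsilon>0$ I would pick an absolutely continuous horizontal curve $\gamma:[0,1]\to\bG$ joining $e$ and $h$ with $\dot\gamma(s)=\sum_{i=1}^m a_i(s)X_i(\gamma(s))$ and $|a(s)|\equiv d(e,h)+\varepsilon$; such a curve exists by the very definition \eqref{d} of $d$ together with Chow's accessibility theorem. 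Since the $X_i$'s are left-invariant, $\tfrac{d}{ds}f(g\circ\gamma(s)) = \sum_i a_i(s)\,X_i f(g\circ\gamma(s))$, so Cauchy--Schwarz in the horizontal inner product followed by Jensen's inequality give
\begin{equation*}
|f(g\circ h)-f(g)|^p \;\le\; (d(e,h)+\varepsilon)^p\int_0^1 |\nh f(g\circ\gamma(s))|^p\,ds;
\end{equation*}
integrating in $g$, swapping order with Fubini, and exploiting the bi-invariance of Haar measure (so that the map $g\mapsto g\circ\gamma(s)$ preserves $dg$) produces the translation estimate after $\varepsilon\to 0^+$.

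To conclude, it remains to bound $\int_{\bG} p(e,h,t)\,d(e,h)^p\,dh$. The upper Gaussian estimate in \eqref{gauss0}, together with the equivalence $d(e,h)\asymp|h|$, reduces this to controlling $t^{-Q/2}\int_{\bG}e^{-\beta|h|^2/t}\,|h|^p\,dh$. Under the anisotropic change of variables $h=\delta_{\sqrt t}(h')$ one has $|h|=\sqrt t\,|h'|$ and $dh=t^{Q/2}dh'$, so the integral becomes $t^{p/2}$ times a universal convergent integral; hence $\int_{\bG} p(e,h,t)\,d(e,h)^p\,dh \le C_p\, t^{p/2}$, and combining with the earlier display gives \eqref{suppose}. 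The only delicate ingredient, in my view, is the rigorous construction of the nearly length-minimizing horizontal curve; the rest is formal manipulation with Fubini and standard Gaussian scaling on $\bG$.
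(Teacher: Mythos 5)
Your argument is correct, and its skeleton coincides with the paper's: reduce to $f\in C^\infty_0(\bG)$ via \eqref{dense}, rewrite the double integral through left-invariance (Proposition \ref{P:prop}(i)) as $\int_{\bG} p(e,h,t)\bigl(\int_{\bG}|f(g\circ h)-f(g)|^p dg\bigr)dh$, prove an $L^p$ translation estimate, and then integrate against the heat kernel using the upper bound in \eqref{gauss0} and the parabolic scaling $h=\delta_{\sqrt t}h'$. The genuine difference is in the translation estimate. The paper differentiates $f$ along the dilation path $\lambda\mapsto g\circ\delta_\lambda g'$ and invokes the Pansu differential $Df(g)(g')=\langle\nh f(g),z'\rangle$ to land on \eqref{mezzo}, whose right-hand side carries only $\|z'\|^p$, the Euclidean norm of the horizontal component of $g'$. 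You instead integrate $\tfrac{d}{ds}f(g\circ\gamma(s))=\sum_i a_i(s)X_i f(g\circ\gamma(s))$ along a constant-speed horizontal curve of length at most $d(e,h)+\ve$, obtaining the larger constant $d(e,h)^p$; since $d(e,h)\asymp |h|$ and $\int_{\bG}e^{-\beta|h|^2}|h|^p dh<\infty$, this costs nothing after the Gaussian integration and yields the same $t^{p/2}$ rate. Your route is in fact the more robust one: for $\lambda>0$ the derivative along the dilation path contains non-horizontal left-invariant derivatives of $f$ (only at $\lambda=0$ does it reduce to $\langle\nh f,z'\rangle$), and an inequality with constant $\|z'\|^p$ cannot hold verbatim for purely vertical increments $g'$ (where $z'=0$ but $f(\cdot\circ g')\not\equiv f$); the version with $d(e,g')$ or $|g'|$ in place of $\|z'\|$ is the one that is safely true, and it is exactly what your horizontal-curve argument delivers. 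The one ingredient you flag as delicate, the existence of a nearly length-minimizing constant-speed horizontal curve, is standard from Chow--Rashevskii together with the identification of \eqref{d} with the Carnot--Carath\'eodory distance, which the paper records.
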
 

\begin{proof}
Suppose first that $f\in C^\infty_0(\bG)$. For any given $g, g'\in \bG$ we write
\[
|f(g\circ g')- f(g)| = \left|\int_0^1 \frac{d}{d\la} f(g\circ \delta_\la g') d\la\right|,
\]
where $\delta_\la$ are the group anisotropic dilations \eqref{dilG}. Recalling the notion of Pansu differential $Df(g)(g')$ in \cite{P}, when the target Carnot group is $\bG' = (\R,+)$ one has
\[
Df(g)(g') = \lim_{\lambda\to 0^+} \frac{f(g \circ \delta_\lambda g') - f(g)}{\lambda} = \frac{d}{d\la} f(g \circ \delta_\lambda g')\big|_{\la=0},
\]
with the limit being locally uniform in $g'\in \bG$. By the Baker-Campbell-Hausdorff formula one finds that when $f$ is smooth then
\[
Df(g)(g') = \langle\nh f(g),z' \rangle.
\]
Substituting in the above equation, integrating in $g\in \bG$, and using the invariance of Lebesgue measure on $\bG$ with respect to right-translations, after changing variable $g\to g'' = g \circ \delta_\lambda g'$, we easily find
\begin{equation}\label{mezzo}
\int_{\bG} |f(g\circ g')- f(g)|^p dg \le ||z'||^p \int_{\bG} |\nh f(g'')|^p dg''.
\end{equation}
Multiplying both sides of \eqref{mezzo} by $p(e,g',t)$, and integrating with respect to $g'$, we obtain
\begin{align*}
&\int_{\bG} p(e,g',t)||z'||^p dg' \int_{\bG} |\nh f(g'')|^p dg''\\ 
&\geq \int_{\bG}\int_{\bG} p(e,g',t)|f(g\circ g')- f(g)|^p dg dg' =\int_{\bG}\int_{\bG} p(e,g^{-1}\circ g''',t)|f(g''')- f(g)|^p dg''' dg \\
&=\int_{\bG}\int_{\bG} p(g,g''',t)|f(g''')- f(g)|^p dg''' dg,
\end{align*}
where in the last equality we have used (i) in Proposition \ref{P:prop}. Exploiting also (ii) in Proposition \ref{P:prop} and \eqref{gauss0}, we find
$$
t^{-\frac{p}{2}}\int_{\bG} p(e,g',t)||z'||^p dg'= \int_{\bG} p(g,e,1)||z||^p dg \leq C \int_{\bG} e^{-\beta |g|^2 }||z||^p dg =:C_p <\infty.
$$
The previous two inequalities yield
\begin{align*}
t^{-\frac{p}{2}} \int_{\bG} P_t\left(|f - f(g)|^p\right)(g) dg &= t^{-\frac{p}{2}} \int_{\bG}\int_{\bG} p(g,g''',t)|f(g''')- f(g)|^p dg''' dg\\
&\leq C_p \int_{\bG} |\nh f(g'')|^p dg''.
\end{align*}
This shows \eqref{suppose} for $f\in C^\infty_0(\bG)$. Appealing to the density result \eqref{dense} it is easy to see that \eqref{suppose} continues to be valid for functions $f\in W^{1,p}(\bG)$.

\end{proof}

\subsection{Heat semigroup estimates for BV functions}\label{SS:heatBV} The following basic result combines the case $p=1$ of Lemma \ref{L:diffquot}, with Theorems \ref{T:dense} and \ref{T:coarea}. It displays the typical character of the geometric Sobolev embedding. Whatever constant works for \eqref{minchietto} below, that same constant works for \eqref{minchiettof}, and vice-versa.

\begin{proposition}\label{P:minchietto}
There exists a universal constant $C_1>0$ such that
if $E\subset \bG$ has finite $\bG$-perimeter, then for every $t>0$ one has\ \footnote{A different proof of \eqref{minchietto} is given in \cite[Prop. 4.2]{BMP}.}
\begin{equation}\label{minchietto}
\frac{1}{\sqrt t} ||P_t\In - \In||_{L^1(\bG)}  \le C_1\ P_\bG(E).
\end{equation}
Furthermore, if $f\in BV(\bG)$, then for every $t>0$ one has
\begin{equation}\label{minchiettof}
\frac 1{\sqrt t}\ \int_{\bG} P_t\left(|f - f(g)|\right)(g) dg \le C_1\ \operatorname{Var}_\bG(f).
\end{equation}
\end{proposition}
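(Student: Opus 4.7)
My first observation is that the two inequalities \eqref{minchietto} and \eqref{minchiettof} are not independent: the first is in fact the special case $f=\In$ of the second. Indeed, $\operatorname{Var}_\bG(\In)=P_\bG(E)$ by definition, and a direct computation using $|\In(g')-\In(g)|=\In(g)(1-\In(g'))+(1-\In(g))\In(g')$ together with the symmetry $p(g,g',t)=p(g',g,t)$ and the stochastic completeness $P_t 1=1$ from Proposition \ref{P:prop} gives
\[
\int_{\bG} P_t(|\In-\In(g)|)(g)\,dg=2\int_E\int_{E^c}p(g,g',t)\,dg'\,dg=\int_E(1-P_t\In)\,dg+\int_{E^c}P_t\In\,dg,
\]
and the right-hand side is precisely $\|P_t\In-\In\|_{L^1(\bG)}$ since $0\le P_t\In\le 1$. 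Hence it suffices to establish \eqref{minchiettof}.

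Next, the case $f\in W^{1,1}(\bG)$ of \eqref{minchiettof} is nothing but the $p=1$ instance of Lemma \ref{L:diffquot}, as $\operatorname{Var}_\bG(f)=\int_{\bG}|\nh f|\,dg$ for such $f$. One can therefore take as $C_1$ the constant $C_p$ of that lemma at $p=1$.

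The only step that requires extra care is the extension from $W^{1,1}(\bG)$ to the full space $BV(\bG)$, which I would carry out by approximation. Given $f\in BV(\bG)$, invoke Theorem \ref{T:dense} to obtain a sequence $f_k\in C^\infty_0(\bG)$ with $f_k\to f$ in $L^1(\bG)$ and $\int_{\bG}|\nh f_k|\,dg\to \operatorname{Var}_\bG(f)$. Extracting a subsequence (not relabelled) one may assume $f_k(g)\to f(g)$ for a.e.\ $g\in\bG$, whence $|f_k(g')-f_k(g)|\to |f(g')-f(g)|$ for a.e.\ $(g,g')\in\bG\times\bG$. Fatou's lemma applied to the double integral
\[
\int_{\bG} P_t(|f-f(g)|)(g)\,dg=\int_{\bG}\int_{\bG}p(g,g',t)|f(g')-f(g)|\,dg'\,dg
\]
then yields
\[
\frac{1}{\sqrt{t}}\int_{\bG} P_t(|f-f(g)|)(g)\,dg\le \liminf_{k\to\infty}\frac{1}{\sqrt{t}}\int_{\bG} P_t(|f_k-f_k(g)|)(g)\,dg\le C_1\liminf_{k\to\infty}\int_{\bG}|\nh f_k|\,dg=C_1\operatorname{Var}_\bG(f),
\]
which is \eqref{minchiettof}.

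The only genuine obstacle is justifying the a.e.\ pointwise convergence needed to apply Fatou, but this is standard upon passing to a subsequence. Everything else is a direct combination of the preparatory Lemma \ref{L:diffquot}, the density/approximation Theorem \ref{T:dense}, and the elementary rewriting that identifies \eqref{minchietto} as the special case $f=\In$ of \eqref{minchiettof}.
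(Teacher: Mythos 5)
Your proposal is correct, and it reaches the conclusion by a genuinely different (and somewhat shorter) route than the paper. The paper proceeds in the opposite direction: it first proves \eqref{minchietto} for characteristic functions by exactly your approximation argument (Theorem \ref{T:dense} applied to $\In$, Lemma \ref{L:diffquot} at $p=1$, a.e.\ convergence along a subsequence, Fatou), and then deduces \eqref{minchiettof} for general $f\in BV(\bG)$ via the layer-cake identity $|f(g')-f(g)|=\int_\R |1_{E^f_\tau}(g')-1_{E^f_\tau}(g)|\,d\tau$, Fubini, the bound \eqref{minchietto} on each level set $E^f_\tau$, and the coarea formula of Theorem \ref{T:coarea}. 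You instead apply Theorem \ref{T:dense} directly to an arbitrary $f\in BV(\bG)$ (the theorem is indeed stated for general $BV$ functions, not just sets of finite perimeter) and run the same Fatou argument on the double integral, which makes the coarea machinery unnecessary and lets you recover \eqref{minchietto} afterwards as the special case $f=\In$; your verification of the identity $\|P_t\In-\In\|_{L^1}=\int_\bG P_t(|\In-\In(g)|)(g)\,dg$ via $P_t1=1$ and $0\le P_t\In\le 1$ matches the paper's \eqref{unoE}. What the paper's route buys is the slicing identity \eqref{slices}, which exhibits the perimeter bound as the structural core of the $BV$ estimate and mirrors the classical De Giorgi--Fleming--Rishel philosophy; what yours buys is economy, since it needs only the density theorem and Fatou. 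The one point requiring care in your version, the a.e.\ convergence of $|f_k(g')-f_k(g)|$ on $\bG\times\bG$, is handled correctly by passing to a subsequence with $f_k\to f$ a.e.\ on $\bG$ and discarding the product-null set $(N\times\bG)\cup(\bG\times N)$.
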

\begin{proof}
In view of Theorem \ref{T:dense} there exists a sequence $f_k\in C^\infty_0(\bG)$ such that as $k\to \infty$ one has
$f_k \longrightarrow 1_E$ in $L^1(\bG)$, and $\int_{\bG} |\nh f_k| dg \longrightarrow \operatorname{Var}_\bG(1_E)= P_\bG(E)$. By the case $p=1$ of Lemma \ref{L:diffquot}
one has
\[
\frac{1}{\sqrt t} \int_{\bG} P_t\left(|f_k - f_k(g)|\right)(g) dg \le C_1 \int_{\bG} |\nh f_k(g)| dg.
\]
Since, by possibly passing to a subsequence, we can always assume that $f_k\to 1_E$ a.e. in $\bG$, we have $P_t(|f_k - f_k(g)|)(g)\longrightarrow P_t(|1_E - 1_E(g)|)(g)$ for a.e. $g\in \bG$. This observation and the theorem of Fatou give
\begin{align*}
& \frac{1}{\sqrt t} \int_{\bG}  P_t(|1_E - 1_E(g)|)(g) dg \le \underset{k\to \infty}{\liminf} \frac{1}{\sqrt t}\int_{\bG} P_t\left(|f_k - f_k(g)|\right)(g) dg 
\\
& \le C_1\ \underset{k\to \infty}{\liminf} \int_{\bG} |\nh f_k(g)| dg dg
= C_1\ P_\bG(E).
\end{align*}
To finish the proof of \eqref{minchietto}, all we are left with is observing that from $P_t1=1$ (see (iii) in Proposition \ref{P:prop}) one has
\begin{equation}\label{unoE}
||P_t\In - \In||_{L^1(\bG)} =  \int_{\bG}{P_t(|\In-\In(g)|)(g)dg}.
\end{equation}
To prove \eqref{minchiettof}, we observe that if $f\in L^1(\bG)$, then for a.e. $g, g'\in \bG$ we trivially have
\begin{equation}\label{riesz}
|f(g')-f(g)|=\int_\R \left|1_{E^f_\tau} (g')-1_{E^f_\tau} (g)\right| d\tau.
\end{equation}
In view of \eqref{riesz}, Fubini's theorem allows to write
\begin{equation}\label{slices}
\int_{\bG} P_t(|f-f(g)|)(g) dg = \int_\R \int_{\bG} P_t(|1_{E^f_\tau}-1_{E^f_\tau}(g)|)(g) dg d\tau.
\end{equation}
We now note that if $f\in BV(\bG)$, then by Theorem \ref{T:coarea}
we know that $P_\bG(E^f_\tau)<\infty$ for a.e. $\tau \in \R$. With this in mind, if we combine \eqref{slices} with \eqref{unoE}, we obtain
\begin{align*}
& \frac{1}{\sqrt t} \int_{\bG} P_t(|f - f(g)|)(g) dg = \frac{1}{\sqrt t} \int_\R \int_{\bG} P_t(|1_{E^f_\tau}-1_{E^f_\tau}(g)|)(g) dg d\tau
\\
& = \frac{1}{\sqrt t} \int_\R  ||P_t1_{E^f_\tau} - 1_{E^f_\tau}||_{L^1(\bG)} d\tau \le C_1\ \int_\R P_\bG(E^f_\tau) d\tau = C_1\ \operatorname{Var}_\bG(f),
\end{align*} 
where in the last two passages we have first used \eqref{minchietto}, and then \eqref{coarea} in Theorem \ref{T:coarea}. The latter estimate establishes \eqref{minchiettof}.

\end{proof}

\subsection{Fractional Sobolev spaces and heat semigroup}\label{SS:fracheat}
In this subsection we establish some basic properties of the fractional Sobolev spaces $\Bps$ introduced in Definition \ref{D:besov}. We refer the interested reader to \cite{GTfi, GTiso, BGT} for a different but related setting.

\begin{lemma}\label{L:inclus}
Fix $1\leq p<\infty$. For any $f\in L^p$ we have
\begin{equation}\label{inbesov}
\underset{t \to 0^+}{\limsup}\ t^{-\frac{p}{2}}\ \int_{\bG} P_t\left(|f - f(g)|^p\right)(g) dg <\infty \quad \Longrightarrow \quad f\in \Bps \mbox{ for every}\ s\in (0,1).
\end{equation}
Moreover the following holds
\begin{equation}\label{chain}
W^{1,p}(\bG) \subset \Bps \subset\mathfrak B_{\sigma,p}(\bG)\,\,\,  \mbox{ for every }\,0<\sigma \leq s <1,
\end{equation}
where the inclusions denote continuous embeddings with respect to the relevant topologies.
\end{lemma}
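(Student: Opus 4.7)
The strategy is to control the defining integral for $\mathscr N_{s,p}(f)$ by splitting $\int_0^\infty = \int_0^1 + \int_1^\infty$ and treating the small-time and large-time regimes separately. The key trivial fact that I plan to exploit repeatedly is that for every $f\in L^p(\bG)$ and every $t>0$,
\begin{equation}\label{PLAN:triv}
\int_\bG P_t(|f-f(g)|^p)(g)\,dg \ \le\ 2^p\,\|f\|_p^p,
\end{equation}
which follows from $|a-b|^p\le 2^{p-1}(|a|^p+|b|^p)$ combined with the identity $\int_\bG P_t(|f|^p)(g)\,dg=\|f\|_p^p$, itself a consequence of the symmetry $p(g,g',t)=p(g',g,t)$ and the stochastic completeness $P_t1=1$ recorded in Proposition \ref{P:prop}.

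For the implication \eqref{inbesov}, I fix $s\in(0,1)$. The limsup hypothesis provides $t_0>0$ and a constant $M$ so that $\int_\bG P_t(|f-f(g)|^p)(g)\,dg \le M t^{p/2}$ for $0<t\le t_0$; choosing $t_0=1$ without loss of generality, I get
\[
\int_0^1 \frac{1}{t^{\frac{sp}{2}+1}}\int_\bG P_t(|f-f(g)|^p)(g)\,dg\,dt \ \le\ M\int_0^1 t^{\frac{p(1-s)}{2}-1}\,dt \ =\ \frac{2M}{p(1-s)},
\]
which is finite precisely because $s<1$. For $t\ge 1$, inserting \eqref{PLAN:triv} gives the finite upper bound $2^p\|f\|_p^p\int_1^\infty t^{-sp/2-1}dt = \tfrac{2^{p+1}}{sp}\|f\|_p^p$. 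Combining the two estimates yields $\mathscr N_{s,p}(f)<\infty$, hence $f\in\Bps$.

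For the chain of embeddings \eqref{chain}: to show $W^{1,p}(\bG)\subset\Bps$ continuously, I invoke Lemma \ref{L:diffquot}, which provides $t^{-p/2}\int_\bG P_t(|f-f(g)|^p)(g)\,dg\le C_p\|\nh f\|_p^p$ uniformly in $t$. The limsup hypothesis of \eqref{inbesov} is thus satisfied, and repeating the two-piece splitting above (with $M$ replaced by $C_p\|\nh f\|_p^p$) produces the explicit bound
\[
\mathscr N_{s,p}(f)^p \ \le\ \frac{2C_p}{p(1-s)}\,\|\nh f\|_p^p \ +\ \frac{2^{p+1}}{sp}\,\|f\|_p^p,
\]
which gives the continuity of the embedding for each fixed $s\in(0,1)$ (with a constant that, unsurprisingly, blows up as $s\to 1^-$). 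For $\Bps\subset \mathfrak B_{\sigma,p}(\bG)$ when $0<\sigma\le s<1$, I split once more: on $(0,1)$ the monotonicity $t^{-\sigma p/2-1}\le t^{-sp/2-1}$ immediately gives $\int_0^1 t^{-\sigma p/2-1}I(t)\,dt\le \mathscr N_{s,p}(f)^p$, where $I(t):=\int_\bG P_t(|f-f(g)|^p)(g)\,dg$; on $[1,\infty)$ the bound \eqref{PLAN:triv} yields $\int_1^\infty t^{-\sigma p/2-1}I(t)\,dt\le \tfrac{2^{p+1}}{\sigma p}\|f\|_p^p$. Summing produces $\mathscr N_{\sigma,p}(f)^p \le \mathscr N_{s,p}(f)^p + \tfrac{2^{p+1}}{\sigma p}\|f\|_p^p$, which, together with the trivial $\|f\|_p\le \|f\|_{\Bps}$, delivers the claimed continuous embedding.

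There is no substantial obstacle in this argument: everything reduces to the elementary two-scale splitting, the quantitative small-time control provided by Lemma \ref{L:diffquot}, and the trivial large-time estimate \eqref{PLAN:triv}. The only point requiring care is the bookkeeping of the blow-up factor $1/(1-s)$ appearing in the $W^{1,p}\hookrightarrow \Bps$ bound, which is in fact exactly the scaling needed to make sense of the limiting behaviour of $(1-s)\mathscr N_{s,p}(f)^p$ studied in Theorem \ref{T:bbmG}.
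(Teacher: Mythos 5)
Your proposal is correct and follows essentially the same route as the paper: the same large-time bound $\int_\bG P_t(|f-f(g)|^p)(g)\,dg\le 2^p\|f\|_p^p$ via $P_t1=P_t^*1=1$, the same two-scale splitting of the $dt$-integral, Lemma \ref{L:diffquot} for the $W^{1,p}\hookrightarrow\Bps$ embedding, and the monotonicity of $t\mapsto t^{-\sigma p/2-1}$ on $(0,1)$ for the nesting in $\sigma$, with identical constants. The only cosmetic point is the phrase ``choosing $t_0=1$ without loss of generality'': one cannot rescale $t_0$, but since the tail estimate holds from any positive splitting point, splitting at $t_0$ itself (as the paper does at $\ve_0$) removes even this non-issue.
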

\begin{proof}
Fix $1\leq p<\infty$ and $0<s<1$, and consider any $f\in L^p$ and $\ve>0$. Using $|a-b|^p\leq 2^{p-1} (a^p+b^p)$ together with $P_t1=P^*_t 1=1$, we have
\begin{equation}\label{Ndopo1}
\int_{\ve}^\infty \frac{1}{t^{\frac{s p}2 +1}} \int_{\bG} P_t\left(|f - f(g)|^p\right)(g) dg dt \leq 2^p \|f\|^p_p \int_{\ve}^\infty \frac{dt}{t^{\frac{s p}2 +1}}=\frac{2^{p+1}}{sp}\ve^{-\frac{s p}2}\|f\|^p_p.
\end{equation}
If we assume that
$$L \overset{def}{=}  \underset{t \to 0^+}{\limsup}\ t^{-\frac{p}{2}}\  \int_{\bG} P_t\left(|f - f(g)|^p\right)(g) dg <\infty,$$  
then we have the existence of $\ve_0>0$ such that 
\[
\underset{\tau\in (0,\ve_0)}{\sup}\ \tau^{-\frac{p}{2}}\ \int_{\bG} P_\tau\left(|f - f(g)|^p\right)(g) dg \leq L+1.
\]
This yields 
\begin{align*}
& \int_0^{\ve_0}  \frac{1}{t^{\frac{s p}2 +1}} \int_{\bG} P_t\left(|f - f(g)|^p\right)(g) dg dt \\
&\leq \left(\underset{\tau\in (0,\ve_0)}{\sup}\ \tau^{-\frac{p}{2}}\ \int_{\bG} P_\tau\left(|f - f(g)|^p\right)(g) dg \right) \int_0^{\ve_0} \frac{dt}{t^{\frac{p}{2}(s-1) +1}}\leq (L+1) \frac{2\ve_0^{\frac{p}{2}(1-s)}}{p(1 -s)}.
\end{align*}
Combining the previous estimate with \eqref{Ndopo1} (with $\ve=\ve_0$) we deduce
$$
\mathscr N_{s,p}(f)^p=\int_0^{\infty}  \frac{1}{t^{\frac{s p}2 +1}} \int_{\bG} P_t\left(|f - f(g)|^p\right)(g) dg dt \leq (L+1) \frac{2\ve_0^{\frac{p}{2}(1-s)}}{p(1 -s)} +\frac{2^{p+1}}{sp}\ve_0^{-\frac{s p}2}\|f\|^p_p
$$
which shows the validity of \eqref{inbesov}.
On the other hand, if $f\in W^{1,p}(\bG)$ we can exploit the uniform bound in \eqref{suppose} and, by arguing in the same way as before (using \eqref{Ndopo1} with $\ve=1$), obtain
\begin{equation}\label{wunoinbesov}
\mathscr N_{s,p}(f)^p \leq \frac{2C_p}{p(1 -s)}\|\nh f\|^p_p +\frac{2^{p+1}}{sp}\|f\|^p_p\qquad \forall\, f\in W^{1,p}(\bG).
\end{equation}
Finally, if $0<\sigma \leq s$ we can use $t^{-\frac{\sigma p}2 -1}\leq t^{-\frac{s p}2 -1}$ for $t\in (0,1)$ and again \eqref{Ndopo1} with $\ve=1$ in order to infer
\begin{equation}\label{besovinbesov}
\mathscr N_{\sigma,p}(f)^p \leq \mathscr N_{s,p}(f)^p +\frac{2^{p+1}}{\sigma p}\|f\|^p_p\qquad \forall\, f\in \Bps.
\end{equation}
The estimates \eqref{wunoinbesov} and \eqref{besovinbesov} complete the proof of \eqref{chain}.
\end{proof}

The next density result will be used in the proof of Theorem \ref{T:MS}.

\begin{lemma}\label{L:dens}
For every $0<s<1$ and $1\le p<\infty$, we have 
$$\overline{C^\infty_0}^{\Bps}=\Bps.$$
\end{lemma}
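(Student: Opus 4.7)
The plan is the classical truncation-and-mollification scheme, with the essential twist that the mollification must be performed by \emph{left} group convolution in order to interact correctly with the left-invariance of the heat kernel (right convolution, e.g.\ by $P_\ve$ itself, does not contract $\mathscr N_{s,p}$).

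\textbf{Step 1 (Truncation).} I would pick smooth cutoffs $\phi_R\in C^\infty_0(\bG)$ with $0\le\phi_R\le 1$, $\phi_R\equiv 1$ on $B(e,R)$, $\mathrm{supp}(\phi_R)\subset B(e,2R)$, $|\nh\phi_R|_\infty\le C/R$, and aim to show $f\phi_R\to f$ in $\Bps$ for every $f\in\Bps$. Letting $\psi_R=1-\phi_R$, the $L^p$ convergence is immediate by dominated convergence, and the expansion
$$(f\psi_R)(g')-(f\psi_R)(g)=\psi_R(g')[f(g')-f(g)]+f(g)[\psi_R(g')-\psi_R(g)]$$
yields $\mathscr N_{s,p}(f\psi_R)^p\le 2^{p-1}(I_R+II_R)$. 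The piece $I_R$, controlled by $\int\!\!\int\!\!\int t^{-sp/2-1}p(g,g',t)\psi_R(g')^p|f(g')-f(g)|^p$, vanishes as $R\to\infty$ by dominated convergence, with integrable dominator furnished by $\mathscr N_{s,p}(f)^p<\infty$ and pointwise convergence $\psi_R(g')^p\to 0$.

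\textbf{The main obstacle} is the piece $II_R=\int_0^\infty t^{-sp/2-1}\int_\bG|f(g)|^pP_t(|\psi_R-\psi_R(g)|^p)(g)\,dg\,dt$, which I would handle by splitting the $t$-integral at $t=1$. For $t<1$ use the Lipschitz bound $|\psi_R(g')-\psi_R(g)|\le (C/R)d(g,g')$ (from $|\nh\psi_R|\le C/R$ together with \eqref{d}), and the moment estimate $\int_\bG p(g,g',t)\,d(g,g')^p dg'\le C_p t^{p/2}$, which one derives from \eqref{gauss0} by left-invariance of $p$ and the rescaling Proposition \ref{P:prop}(ii); this gives $II_R^{(t<1)}\le C_p R^{-p}\|f\|_p^p\to 0$. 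For $t\ge 1$ use the trivial bound $|\psi_R(g')-\psi_R(g)|^p\le 2^{p-1}(\psi_R(g')^p+\psi_R(g)^p)$ and reduce to checking that $\int_{|g|\ge R}|f|^p dg$ and $\int_{|g|\ge R}P_t(|f|^p)(g)\,dg$ (via heat-kernel symmetry) both vanish as $R\to\infty$, both by dominated convergence, each majorised by $\|f\|_p^p$ which is integrable against $t^{-sp/2-1}$ on $[1,\infty)$. This gives $f\phi_R\to f$ in $\Bps$ and reduces the density question to compactly supported elements of $\Bps$.

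\textbf{Step 2 (Left-convolution mollification).} I would pick $\varphi\in C^\infty_0(\bG)$ with $\varphi\ge 0$, $\int\varphi=1$, set $\varphi_\ve(g)=\ve^{-Q}\varphi(\delta_{1/\ve}g)$, and define $(\varphi_\ve*u)(g)=\int_\bG\varphi_\ve(\eta)u(\eta^{-1}g)\,d\eta$. After the bi-invariant substitution $\tilde g=\eta^{-1}g$ this reads $\int\varphi_\ve(g\tilde g^{-1})u(\tilde g)d\tilde g$, so differentiation under the integral sign (legitimate because the group operation is polynomial in exponential coordinates) gives $\varphi_\ve*u\in C^\infty(\bG)$, with $\mathrm{supp}(\varphi_\ve*u)\subset\mathrm{supp}(\varphi_\ve)\cdot\mathrm{supp}(u)$, so compact support passes through and $\varphi_\ve*u\to u$ in $L^p$ by standard mollification. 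The decisive property is the contraction $\mathscr N_{s,p}(\varphi_\ve*u)\le\mathscr N_{s,p}(u)$: $\mathscr N_{s,p}$ is invariant under left translations $u\mapsto u\circ L_a$ via the change of variables $(h,h')=(ag,ag')$ and $p(ag,ag',t)=p(g,g',t)$, and since $\varphi_\ve*u$ is a probability-weighted average of such left translates of $u$, Minkowski's integral inequality applied to the $L^p$-norm underlying $\mathscr N_{s,p}$ delivers the estimate.

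\textbf{Conclusion by diagonal argument.} For $u\in\Bps$ compactly supported, I would prove $\mathscr N_{s,p}(u-\varphi_\ve*u)\to 0$ by splitting at some $\delta>0$: the tail $\int_\delta^\infty$ is bounded via \eqref{Ndopo1} by $(2^{p+1}/sp)\delta^{-sp/2}\|u-\varphi_\ve*u\|_p^p$, vanishing as $\ve\to 0^+$; the head $\int_0^\delta$, after using $|u(g')-\varphi_\ve*u(g')-u(g)+\varphi_\ve*u(g)|^p\le 2^{p-1}(|u(g')-u(g)|^p+|(\varphi_\ve*u)(g')-(\varphi_\ve*u)(g)|^p)$ together with the contraction of Step 2, is bounded uniformly in $\ve$ by $2^p\int_0^\delta t^{-sp/2-1}\int_\bG P_t(|u-u(g)|^p)(g)dg\,dt$, which is $<\eta/2$ for $\delta$ small since $u\in\Bps$. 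Finally, given $\eta>0$ and $f\in\Bps$, first choose $R$ so that $\|f-f\phi_R\|_{\Bps}<\eta/2$ (Step 1), then $\ve$ so that $\|f\phi_R-\varphi_\ve*(f\phi_R)\|_{\Bps}<\eta/2$ (Step 2); the resulting $\varphi_\ve*(f\phi_R)\in C^\infty_0(\bG)$ is the desired approximant.
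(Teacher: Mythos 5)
Your proposal is correct and rests on the same two ingredients as the paper's own proof: approximation by the group mollifiers $\rho_\ve\star f$ together with the contraction property \eqref{unifeps}, and truncation by smooth cut-off functions. You merely perform the two steps in the opposite order (truncate first, then mollify) and write out in full the truncation estimates that the paper delegates to \cite[Prop. 3.2]{BGT}.
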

\begin{proof}
Following a standard pattern one can first show that
\begin{equation}\label{firstdens}
\overline{C^\infty\cap \Bps}^{\Bps}=\Bps.
\end{equation}
Once \eqref{firstdens} is accomplished, one can conclude the desired density result via multiplication with a sequence of smooth cut-off functions approximating 1 in a pointwise sense. For details we refer the reader to \cite[Prop. 3.2]{BGT}, but we mention that, in the present situation, the proof of \eqref{firstdens} is easier  with respect to that in \cite[Prop. 3.2]{BGT}. Here, given any $f\in \Bps$, in order to show that $\mathscr N_{s,p}(f-f_\ve)\to 0$ as $\ve\to 0^+$ we can use the group mollifiers $f_\ve = \rho_\ve \star f$ (see the proof of Lemma \ref{L:inf}, and in particular \eqref{unifeps}).

\end{proof}

%%%%%%%%%%%%%%%%%%%%%%%%%%%%%%%%%%%%% 

\section{An integral decoupling property of the heat semigroup}\label{S:new}

In this section we establish some basic properties of the heat semigroup $P_t$ that will provide the backbone of Theorems \ref{T:mainp} and \ref{T:p1}. 
Let $1\le m < N\in \mathbb N$ and denote points in $\R^N$ by $(z,\sigma)$, where $z\in \Rm$ and $\sigma \in \R^{N-m}$. Consider two decoupled parabolic operators:  $L_1 - \p_t$ in $\Rm\times(0,\infty)$, and $L_2 - \p_t$ in $\R^{N-m}\times (0,\infty)$, and assume that their associated semigroups are stochastically complete.  Denote by $p_m(z,t)$ and $p_{N-m}(\sigma,t)$ their respective fundamental solutions.  It is a classical fact that the fundamental solution of the parabolic operator $L_1 + L_2 - \p_t$ in $\RN\times (0,\infty)$ is given by
\begin{equation}\label{pd}
p_N((z,\sigma),t) = p_m(z,t) \times p_{N-m}(\sigma,t).
\end{equation}
The remarkable pointwise decoupling formula \eqref{pd}, combined with the stochastic completeness of $p_{N-m}(\sigma,t)$,
 immediately implies the following integral decoupling property 
\begin{equation}\label{se}
\int_{\R^{N-m}} p_N((z,\sigma),t) d\sigma = p_m(z,t).
\end{equation}

Let us now move to sub-Riemannian geometry and consider the ``simplest" situation of a Carnot  group $\bG$ with step $r=2$. In such framework there exists the following famous formula for the heat kernel with pole at $(e,0)\in \bG\times\R$ that goes back to Gaveau-Hulanicki-Cygan, see e.g. \cite[Theor. 4.6]{GTpotan}, 
\begin{align}\label{ournucleo}
& p(g,e,t) = \frac{2^{m_2}}{(4\pi t)^{\frac Q2}}  \int_{\R^{m_2}} e^{-\frac{i}{t} \langle \sigma,\la\rangle} \left(\det j(\sqrt{- J(\la)^2})\right)^{1/2} 
\\
& \times \exp\bigg\{-\frac{1}{4t}\langle j(\sqrt{- J(\la)^2}) \cosh \sqrt{- J(\la)^2} z,z\rangle \bigg\}
d\la,\notag
\end{align}
where according with our agreement in Section \ref{S:prelim} we have identified $g\in \bG$ with its logarithmic coordinates $(z,\sigma)$, where $z\in \Rm$, and $\sigma, \la\in \R^{m_2}$. Given a $m\times m$ matrix $C$ with real coefficients, we have indicated by $j(C)$ the matrix identified by the power series of the function $j(\tau)=\frac{\tau}{\sinh(\tau)}$. Finally, we have denoted by 
$J: \bg_2 \to \operatorname{End}(\bg_1)$ the Kaplan mapping defined by 
$$\langle J(\sigma)z,\zeta\rangle = \langle [z,\zeta],\sigma\rangle = - \langle J(\sigma)\zeta,z\rangle.$$
Clearly, $J(\sigma)^\star = - J(\sigma)$, and one has $- J(\sigma)^2\ge 0$.

Formula \eqref{ournucleo} underscores how badly the sub-Riemannian heat kernel fails to have a pointwise decoupling property such as \eqref{pd} and, in general, to have any symmetry. In particular, it is not even spherically symmetric with respect to the horizontal layer $\bg_1$ of the Lie algebra. In contrast, we have the following result. 

\begin{theorem}[Integral decoupling property]\label{T:int}
Let $\bG$ be any Carnot group. For any $z\in \R^m$, $t>0$, and $(z',\sigma')\in \bG$, we have
$$
\int_{\R^{N-m}} p((z,\sigma),(z',\sigma'),t)d\sigma = (4\pi t)^{-\frac{m}{2}}e^{-\frac{||z-z'||^2}{4t}}.
$$
\end{theorem}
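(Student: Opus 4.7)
My plan is to exploit the fact that the horizontal Laplacian $\mathscr L$, when acting on functions depending only on the horizontal coordinate $z\in\R^m$, coincides with the standard Euclidean Laplacian on $\R^m$. Combined with uniqueness of bounded solutions of the parabolic Cauchy problem, this would identify the $\sigma$-integral of the heat kernel with the Euclidean heat kernel on $\R^m$.

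The key algebraic observation is the following. Given $f\in C^2(\R^m)$, extend it trivially to $\bG$ as $\tilde f(z,\sigma):=f(z)$. From the expression \eqref{Xi} for $X_i$, we have $X_i\tilde f=\p_{z_i}f$ because $\p_{\sigma_{j,s}}\tilde f\equiv 0$. Since $X_i\tilde f$ still depends only on $z$, iterating yields $X_i^2\tilde f=\widetilde{\p_{z_i}^2 f}$, and hence
\[
\mathscr L\tilde f=\widetilde{\Delta f}\quad\text{on }\bG,
\]
where $\Delta$ is the standard Laplacian on $\R^m$. This simple identity is really the heart of the matter, and reflects the fact that the projection $\pi\colon(z,\sigma)\mapsto z$ is a homomorphism onto the abelianization of $\bG$.

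Fix now $f\in C^\infty_0(\R^m)$, and denote by $\{P_t^{E}\}$ the Euclidean heat semigroup on $\R^m$. I would compare on $\bG\times(0,\infty)$ the two bounded functions
\[
u(g,t):=P_t\tilde f(g),\qquad w(g,t):=\widetilde{P_t^{E}f}(g)=P_t^{E}f(z),\qquad g=(z,\sigma).
\]
By the identity above applied to the Euclidean solution $P_t^{E}f$ in the variable $z$, one has $\p_t w=\Delta(P_t^{E}f)(z)=\mathscr L w$ on $\bG\times(0,\infty)$, with initial datum $\tilde f$; so $w$ solves the same Cauchy problem as $u$. Uniqueness of bounded solutions of $\p_t v=\mathscr L v$ on $\bG$—a standard consequence of the stochastic completeness \emph{(iii)} in Proposition \ref{P:prop} together with the Gaussian bounds of Proposition \ref{P:gaussian}—forces $u\equiv w$. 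Unfolding this identity and using the arbitrariness of $f$ and continuity of the two kernels in $z'$ yields
\[
\int_{\R^{N-m}}p((z,\sigma),(z',\sigma'),t)\,d\sigma'=(4\pi t)^{-m/2}e^{-\|z-z'\|^2/(4t)}.
\]
The theorem, which integrates in $\sigma$ rather than $\sigma'$, then follows at once from the symmetry $p(g,g',t)=p(g',g,t)$, which allows us to exchange the roles of the two arguments.

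The main obstacle is the uniqueness step: one needs uniqueness of bounded solutions of $\p_t u=\mathscr L u$ on $\bG\times(0,\infty)$. Although well-known, this requires some justification, via a parabolic maximum principle based on \eqref{gauss0} and $P_t 1=1$. An alternative route that avoids uniqueness altogether is to verify directly that $v(z,t):=\int_{\R^{N-m}}p((z,\sigma),e,t)\,d\sigma$ satisfies $\p_t v=\Delta_z v$ on $\R^m\times(0,\infty)$: the crucial structural feature in \eqref{Xi}, namely that $b^s_{j,i}$ is independent of $\sigma_{k,l}$ for $k\ge j$, lets integration by parts in $\sigma$ annihilate all ``vertical'' terms in $X_i^2$, leaving only $\p_{z_i}^2$; the initial condition $v(\cdot,0^+)=\delta_0$ is then read off by weak testing against $C_0(\R^m)$ and invoking Proposition \ref{P:prop} \emph{(iii)}, after which the Gaussian $(4\pi t)^{-m/2}e^{-\|z\|^2/(4t)}$ is singled out as the unique solution of the Euclidean Cauchy problem with such data, and the general $(z',\sigma')$ is recovered by left-translation.
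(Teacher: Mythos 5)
Your primary argument is correct, but it runs in the opposite direction from the paper's. The paper works on the kernel itself: it sets $h(z,z',t)=\int_{\R^{N-m}}p((z-z',\sigma),e,t)\,d\sigma$ and verifies by hand that $h$ is a mass-one approximate identity on $\R^m$ solving $\Delta_z h=\partial_t h$; the only nontrivial computation is precisely the integration by parts in $\sigma$ that you sketch in your ``alternative route'' (the vanishing of $\int b^s_{j,i}\,\partial_{\sigma_{j,s}}p\,d\sigma$ uses that $b^s_{j,i}$ does not depend on $\sigma_{j,s}$, together with the Gaussian decay), after which the Gauss--Weierstrass kernel is singled out by uniqueness of bounded solutions of the \emph{Euclidean} Cauchy problem. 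So your alternative route is, in essence, the paper's proof. Your primary route instead tests the semigroup against cylindrical functions $\tilde f(z,\sigma)=f(z)$, for which $\mathscr L\tilde f=\widetilde{\Delta f}$ is immediate from \eqref{Xi} with no integration by parts at all, and then identifies $P_t\tilde f$ with the lift of $P_t^{E}f$ (the Euclidean heat semigroup on $\R^m$) by uniqueness of bounded solutions of $\partial_t u=\mathscr L u$ on $\bG$; the final exchange of $\sigma'$ for $\sigma$ via the symmetry $p(g,g',t)=p(g',g,t)$ is fine. What you buy is the elimination of the only computation in the proof; what you pay is that the uniqueness statement you invoke now lives on the sub-Riemannian side, where it is genuinely less elementary than Tychonoff-type uniqueness on $\R^m$. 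It is true --- conservativeness $P_t1=1$ is equivalent to uniqueness of the bounded Cauchy problem in this setting, and one can alternatively run a parabolic maximum principle with a Lyapunov function built from the gauge --- but your phrase ``a standard consequence of (iii) together with the Gaussian bounds'' undersells it: this is a theorem requiring proof or a precise citation, not an observation, and it is the load-bearing step of your argument. The paper's arrangement deliberately pushes all the uniqueness onto $\R^m$ exactly to avoid this.
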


\begin{proof}
We observe that in view of (i) in Proposition \ref{P:prop} one has $p((z,\sigma),(z,\sigma'),t)=p((z',\sigma')^{-1}\circ (z,\sigma),e,t)$. On the other hand, from the Baker-Campbell-Hausdorff formula \eqref{BCH} we know that (see \cite[formula (1.22)]{FS} or also p. 19 in \cite{CG}) the differential of left-translation is a lower-triangular matrix with $1$'s on the main diagonal. Therefore, if we write $(z-z',\sigma'') = (z',\sigma')^{-1}\circ (z,\sigma)$, then also the Jacobian of the change of variable $\sigma \to \sigma''$ in $\R^{N-m}$ 
has Jacobian determinant one. It follows that
 \begin{equation*}
\int_{\R^{N-m}} p((z,\sigma),(z',\sigma'),t)d\sigma = \int_{\R^{N-m}} p((z-z',\sigma''),e,t)d\sigma'' = \int_{\R^{N-m}} p((z'-z,\sigma''),e,t)d\sigma''.
\end{equation*}
If we thus define
\begin{equation}\label{defkernelh}
h(z,z',t)=\int_{\R^{N-m}} p((z-z',\sigma),e,t)d\sigma,
\end{equation}
it is obvious that one has for every $z, z'\in\R^m$ and $t>0$
$$
h(z,z',t)=h(z',z,t)\qquad h(z,z',t)=h(z-z',0,t).
$$
Moreover, in view of the Gaussian estimates \eqref{gauss0}-\eqref{gauss1}-\eqref{gauss2} it is clear that $h$ is a (smooth) positive function. From property (iii) of Proposition \ref{P:prop} we have
\begin{equation}\label{fauno}
\int_{\R^m} h(z,z',t)dz'=\int_{\R^m}\int_{\R^{N-m}} p((z'-z,\sigma),e,t)d\sigma dz'=\int_\bG p(g,e,t) dg=1.
\end{equation}
Since the very definition of non-isotropic gauge yields $|(z-z',\sigma)|^2\geq ||z-z'||^2$, from \eqref{gauss0} we infer
$$
0< h(z,z',t) \leq C t^{-\frac{Q}{2}} e^{-\frac{\beta }{2} \frac{||z-z'||^2}{t} }\int_{\R^{N-m}} e^{-\frac{\beta}{2} \frac{|(z-z',\sigma)|^2}{t}}d\sigma.
$$
Using the validity of $\left(\sum_{s=2}^r ||\sigma_s||^{2r!/s}\right)^{2/2r!} \ge c_1 \sum_{s=2}^r ||\sigma_s||^{2/s}$ for some positive $c_1$, we also have
\begin{align*}
& \int_{\R^{N-m}} e^{-\frac{\beta}{2} \frac{|(z-z',\sigma)|^2}{t}}d\sigma \le \int_{\R^{N-m}} e^{-\frac{\beta c_1}{2}  \sum_{s=2}^r \frac{||\sigma_s||^{2/s}}{t}} d\sigma
\\
& = \prod_{s=2}^r \int_{\R^{m_s}} e^{-\frac{\beta c_1}{2} \frac{||\sigma_s||^{2/s}}{t}} d\sigma_s = \prod_{s=2}^r t^{\frac{s m_s}{2}} \int_{\R^{m_s}} e^{-\frac{\beta c_1}{2} ||\xi_s||^{2/s}} d\xi_s = A\ t^{\frac{Q-m}2}, 
\end{align*}
where in the second to the last equality we have made the change of variable $\sigma_s = t^{\frac s2} \xi_s$ in the integral over $\R^{m_s}$, and we have then let $A = \prod_{s=2}^r  \int_{\R^{m_s}} e^{-\frac{\beta c_1}{2} ||\xi_s||^{2/s}} d\xi_s > 0$. Substituting in the above estimate, we obtain for every $\delta>0$ and $z\in \Rm$
\begin{align*}
& \int_{\{z'\in\R^m\,:\, ||z'-z||\geq \delta\}} h(z,z',t)dz' \le C A\ t^{-\frac{m}{2}} \int_{\{z'\in\R^m\,:\, ||z'-z||\geq \delta\}} e^{-\frac{\beta }{2} \frac{||z-z'||^2}{t}} dz'
\\
& = C A \int_{\{z'\in\R^m\,:\, ||z'||\geq \frac{\delta}{t}\}} e^{-\frac{\beta }{2} ||z'||^2} dz'.
\end{align*}
From this estimate we conclude that for every $\delta>0$ and $z\in\Rm$
\begin{equation}\label{fadelta}
\underset{t \to 0^+}{\lim} \int_{\{z'\in\R^m\,:\, ||z'-z||\geq \delta\}} h(z,z',t)dz'=0.
\end{equation}
Finally, we claim that
\begin{equation}\label{facaldo}
\Delta_z h(z,z',t)-\partial_t h(z,z',t)=0\qquad\mbox{ for all }z,z'\in\R^m \mbox{ and }t>0.
\end{equation}
On the one hand, \eqref{gauss1}-\eqref{gauss2} allow to differentiate under the integral sign, obtaining
\begin{equation}\label{derh}
\begin{cases}
\partial_t h(z,z',t) =\int_{\R^{N-m}} \partial_t p((z-z',\sigma),e,t)d\sigma, 
\\
\partial_{z_i} h(z,z',t)=\int_{\R^{N-m}} \partial_{z_i} p((z-z',\sigma),e,t)d\sigma.
\end{cases}
\end{equation}
On the other hand, in the notation of Section \ref{S:prelim} we obtain from \eqref{Xi}
\[
\partial_{z_i} = X_i - \sum_{j=2}^{r}\sum_{s=1}^{m_j} b^s_{j,i}(z,...,\sigma_{j-1})
\partial_{\sigma_{j,s}}.
\]
This gives
\begin{align*}
\partial_{z_i} h(z,z',t) & = \int_{\R^{N-m}} X_i p((z-z',\sigma),e,t)d\sigma
\\
& - \sum_{j=2}^{r}\sum_{s=1}^{m_j} \int_{\R^{N-m}} b^s_{j,i}(z,...,\sigma_{j-1})
\partial_{\sigma_{j,s}} p((z-z',\sigma),e,t) d\sigma.
\end{align*}
However, recalling that $b^s_{j,i}$ are homogeneous polynomials and the derivatives of $p$ have exponential decay, we now have for $j=2,...,r$ and $s=1,...,m_j$
\begin{align*}
& \int_{\R^{N-m}} b^s_{j,i}(z,...,\sigma_{j-1})
\partial_{\sigma_{j,s}} p((z-z',\sigma),e,t) d\sigma
\\
&=\int_{\R^{N-m-1}} b^s_{j,i}(z,...,\sigma_{j-1}) \left(\int_\R \partial_{\sigma_{j,s}} p((z-z',\sigma),e,t) d\sigma_{j,s}\right) d\hat{\sigma} \\
&=\int_{\R^{N-m-1}} b^s_{j,i}(z,...,\sigma_{j-1}) \bigg\{p((z-z',\sigma),e,t)\bigg\}^{\sigma_{j,s}=\infty}_{\sigma_{j,s}=-\infty} d\hat{\sigma}=0,
\end{align*}
where the last equality is again a consequence of \eqref{gauss0}. This implies the remarkable identity
$$
\partial_{z_i} h(z,z',t) = \int_{\R^{N-m}} X_i p((z-z',\sigma),e,t)d\sigma.
$$
Arguing in the same way for $\partial_{z_i z_i} h$ and summing in $i\in\{1,\ldots,m\}$, we obtain
\begin{equation}\label{der2h}
\Delta_z h(z,z',t)=\int_{\R^{N-m}} \mathscr L  p((z-z',\sigma),e,t)d\sigma,
\end{equation}
where $\mathscr L$ is as in \eqref{L}. 
Since $\left(\mathscr L -\partial_t\right) p=0$, the combination of \eqref{derh} and \eqref{der2h} shows the validity of the claim \eqref{facaldo}. Let now $\vf \in C(\R^m)\cap L^{\infty}(\R^m)$. From the crucial properties \eqref{fauno},\eqref{fadelta} and \eqref{facaldo} (and the ubiquitous Gaussian estimates \eqref{gauss0}-\eqref{gauss1}-\eqref{gauss2}) we infer in a standard fashion that the bounded function
$$
u(z,t)\overset{def}{=}\int_{\R^m} h(z,z',t)\vf(z')dz'
$$
is a classical solution to the Cauchy problem in $\R^m\times(0,\infty)$
\begin{equation}\label{cp}
\begin{cases}
\Delta_z u -  \partial_t u = 0,
\\
u(z,0) = \vf(z).
\end{cases}
\end{equation}
From the uniqueness of the bounded solutions to \eqref{cp} we infer that
$$
\int_{\R^m} h(z,z',t)\vf(z')dz'= \int_{\R^m}(4\pi t)^{-\frac{m}{2}}e^{-\frac{||z-z'||^2}{4t}}\vf(z')dz'.
$$
Since this identity holds for arbitrary $z\in\R^m$, $t>0$, and $\vf \in C(\R^m)\cap L^{\infty}(\R^m)$, we conclude that for every $z, z'\in\R^m$ and $t>0$
$$
h(z,z',t)=(4\pi t)^{-\frac{m}{2}}e^{-\frac{||z-z'||^2}{4t}}.
$$
Recalling \eqref{defkernelh}, we have thus completed the proof of the theorem. 

\end{proof}

%%%%%%%%%%%%%%%%%%%%%%%%%%%%%%%%%%%%%%%%%%%%
\subsubsection{{\bf Nihil sub sole novum (Ecclesiaste), or also ``Chi cerca trova, e chi ricerca... ritrova" (E. De Giorgi)}}\label{SS:fulvio}

We thank Fulvio Ricci for kindly bringing to our attention that a different, more abstract proof of the integral decoupling formula in Theorem \ref{T:int} can be extracted from the following result of D. M\"uller in \cite[Proposition 1.1]{Mu}.  Let $C_\infty(\R^+)$ be the space of continuous functions on $\R^+$ which vanish at infinity.

\begin{proposition}[D. M\"uller]
Let $\bG$ be a homogeneous Lie group and let $L$ be a left-invariant, positive Rockland differential operator on $\bG$.
 Let $\pi$ be an irreducible unitary representation of $\bG$. If $m\in C_\infty(\R^+)$, then
\[
\pi(m(L)) = m(d\pi(L)).
\]
\end{proposition}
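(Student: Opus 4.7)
The plan is to establish the functional calculus identity $\pi(m(L)) = m(d\pi(L))$ by first interpreting both sides carefully and then matching them on the one-parameter family of heat exponentials. Since $L$ is a positive Rockland operator, it is essentially self-adjoint on $C^\infty_c(\bG)$ as an unbounded operator on $L^2(\bG)$, with spectrum contained in $[0,\infty)$. For $m \in C_\infty(\R^+)$, $m(L)$ is therefore a bounded operator on $L^2(\bG)$ defined by the Borel functional calculus. By left-invariance, $m(L)$ acts as convolution by a tempered distribution $K_m$, and by convention $\pi(m(L))$ means the integrated form $\pi(K_m) = \int_\bG K_m(g)\,\pi(g)\,dg$, a bounded operator on the representation space $H$.

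The first major step is to show that $d\pi(L)$, initially defined on the space $H^\infty$ of smooth vectors of $\pi$, is essentially self-adjoint and nonnegative. Symmetry and positivity are immediate from unitarity of $\pi$ together with the analogous properties of $L$. Essential self-adjointness is the delicate point; the natural approach is a Nelson-type analytic-vector argument. One verifies that the vectors of the form $\pi(\phi) v$ with $\phi \in C^\infty_c(\bG)$ and $v \in H$ form a dense subspace of $H^\infty$, satisfy $d\pi(L)\pi(\phi)v = \pi(L\phi)v$, and are analytic for $d\pi(L)$ because the heat semigroup of $L$ smooths them into entire vectors. The hypoellipticity encoded in the Rockland condition is precisely what guarantees that this program can be carried out.

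The second step is to establish the semigroup identity $\pi(e^{-tL}) = e^{-t d\pi(L)}$ for all $t > 0$. The left-hand side equals $\pi(h_t)$, where $h_t$ is the (smooth, rapidly decaying) heat kernel of $L$; by the convolution semigroup property $h_{t+s} = h_t \ast h_s$ and strong continuity of $\pi$, the family $T_t := \pi(h_t)$ is a $C_0$-semigroup of contractions on $H$. Differentiating $T_t \pi(\phi) v$ at $t = 0^+$ and using $\partial_t h_t = -L h_t$ identifies the generator of $T_t$ as $-d\pi(L)$ on the dense core of vectors $\pi(\phi)v$. By the essential self-adjointness established in the first step together with the Hille--Yosida uniqueness theorem, $T_t$ must coincide with $e^{-t d\pi(L)}$.

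The final step extends the identity from exponentials to arbitrary $m \in C_\infty(\R^+)$. The family $\{\lambda \mapsto e^{-t\lambda}\}_{t>0}$ is a multiplicatively closed, point-separating subfamily of $C_\infty(\R^+)$; by Stone--Weierstrass applied on $[\varepsilon,\infty)$ and a limit $\varepsilon\to 0^+$ that exploits vanishing at infinity, its linear span is uniformly dense in $C_\infty(\R^+)$. Both $m \mapsto \pi(m(L))$ and $m \mapsto m(d\pi(L))$ are continuous in the sup-norm (each is the composition of the spectral calculus of a self-adjoint operator with a bounded representation-theoretic map), so the identity at exponentials promotes to all of $C_\infty(\R^+)$. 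The principal obstacle in this whole scheme is the first step: proving essential self-adjointness of $d\pi(L)$, which requires the hypoellipticity of $L$ to construct a genuinely dense supply of analytic vectors — once this is in place, the semigroup identity and its extension to the full functional calculus are standard functional-analytic consequences.
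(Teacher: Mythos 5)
This proposition is not proved in the paper at all: it is quoted verbatim from D.~M\"uller's article (\cite[Proposition 1.1]{Mu}) as an external result, mentioned only to acknowledge that the integral decoupling property of Theorem \ref{T:int} admits an alternative, more abstract derivation. There is therefore no in-paper argument to compare yours against, and I can only assess your outline on its own terms; it is essentially the standard proof and its overall structure is sound.

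Two remarks. First, your Steps 1 and 2 are more naturally run in the opposite order, which also removes the machinery you flag as the main obstacle: since the heat kernel $h_t$ of a positive Rockland operator is Schwartz and satisfies $h_t^*=h_t$, the family $T_t=\pi(h_t)$ is already a strongly continuous self-adjoint contraction semigroup on $H$, hence has a self-adjoint nonpositive generator $A$; the G\aa rding space is a dense $T_t$-invariant subspace of $D(A)$ on which $A=-d\pi(L)$, so by the invariant-core theorem it is a core for $A$, and essential self-adjointness of $d\pi(L)$ falls out for free, with no need for Nelson's analytic-vector argument. Second --- and this is the one genuine gap --- your closing continuity claim, that $m\mapsto\pi(m(L))$ is sup-norm continuous, is not automatic: for a general $m\in C_\infty(\R^+)$ the convolution kernel $K_m$ of $m(L)$ is not in $L^1(\bG)$, so $\pi(K_m)$ must first be \emph{defined} by extending $\pi$ to the reduced group $C^*$-algebra, and the required bound $\|\pi(K_m)\|\le\|m(L)\|_{L^2\to L^2}\le\|m\|_\infty$ rests on the fact that every irreducible unitary representation of a nilpotent (hence amenable) Lie group is weakly contained in the regular representation. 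Without invoking amenability at this point, the Stone--Weierstrass density argument does not close.
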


%%%%%%%%%%%%%%%%%%%%%%%%%%%%%%%%%%%%%%%%%%%

\subsection{A sub-Riemannian Ledoux-Huisken lemma}\label{SS:LH}
We next use Theorem \ref{T:int} to establish another remarkable property of the heat semigroup $P_t$. To motivate it, we consider the standard heat kernel in $\Rm$, $p(z,t) = (4\pi t)^{-\frac m2} e^{-\frac{|z|^2}{4t}}$, and let $\mathbb S^{m-1} = \{\nu\in \Rm\mid ||\nu|| = 1\}$. It is an elementary (and beautiful) exercise to show that for any $\nu\in \mathbb S^{m-1}$, $t>0$, and $1\le p<\infty$ one has 
\begin{equation}\label{led}
\frac{1}{t^{\frac p2}} \int_{\Rm} p(z,t) |\langle \nu,z\rangle|^p dz = \frac{2 \G(p)}{\G(p/2)}.
\end{equation}
The reader should note the appearance in the right-hand side of \eqref{led} of the same dimensionless constant in the right-hand side of \eqref{thesispPtk}. As far as we are aware of, the identity \eqref{led} was first used by Ledoux in the case $p=1$ in his approach to the isoperimetric inequality based on the heat semigroup, see \cite{Led} and also \cite{MPPP}. Another elementary (and equally beautiful) property of the standard heat kernel in $\Rm$ is expressed by the following identity which represents a special case of a deep monotonicity formula discovered by Huisken, see \cite[Theor. 3.1]{Hui}, 
\begin{equation}\label{hui}
\sqrt{4\pi t} \int_{\{z\in\R^m\mid\left\langle z,\nu\right\rangle =0\}} p(z,t)\ dz = 1,
\end{equation} 
where again $\nu\in \mathbb S^{m-1}$ is an arbitrary direction and $t>0$. We note that the hyperplane $\{z\in\R^m\mid\left\langle z,\nu\right\rangle =0\}$ is a global minimal surface in $\Rm$. In both \eqref{led} and \eqref{hui} the symmetries of the heat kernel seem to play an essential role. Quite surprisingly, the next result shows that, in fact, \eqref{led} and \eqref{hui} have a universal character, in the sense that they hold unchanged when the Gauss-Weierstrass kernel is replaced by the very complicated and non-explicit heat kernel $p(g,e,t)$ in any Carnot group! As agreed in the opening of the section, we will identify a point $g\in \bG$ with $(z,\sigma)$, where $z\in \Rm$ and $\sigma\in \R^{N-m}$.

\begin{lemma}[sub-Riemannian Ledoux-Huisken lemma]\label{L:id}
Let $\bG$ be a Carnot group, and let $1\le p<\infty$. For any $\nu\in \mathbb S^{m-1}$ and $t>0$ we have
\begin{equation}\label{Inucostante}
\frac{1}{t^{\frac p2}} \int_{\bG} p(g,e,t) |\langle \nu,z\rangle|^p dg = \frac{2 \G(p)}{\G(p/2)},
\end{equation}
and
\begin{equation}\label{punoint}
\sqrt{4\pi t} \int_{\{z\in\R^m\mid\left\langle z,\nu\right\rangle =0\}\times \R^{N-m}} p((z,\sigma),e,t)\ dzd\sigma = 1.
\end{equation}
\end{lemma}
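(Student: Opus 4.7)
The whole argument will be a direct application of the integral decoupling formula in Theorem \ref{T:int}. The guiding idea is that, after integrating out the non-horizontal variables $\sigma\in\R^{N-m}$, the complicated sub-Riemannian kernel $p(\cdot,e,t)$ collapses onto the standard Gauss--Weierstrass kernel $G_t(z):=(4\pi t)^{-m/2}\exp(-\|z\|^{2}/(4t))$ on the horizontal layer $\R^m$, which is fully rotationally symmetric. Both \eqref{Inucostante} and \eqref{punoint} will then reduce to elementary one- and $(m-1)$-dimensional Gaussian computations, exactly as in the classical Ledoux and Huisken identities \eqref{led}, \eqref{hui}.

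For \eqref{Inucostante}, the first step is to apply Fubini (justified by the upper Gaussian estimate \eqref{gauss0}) and then Theorem \ref{T:int} with $(z',\sigma')=(0,0)$ to get
\[
\int_\bG p(g,e,t)\,|\langle \nu,z\rangle|^p\, dg
= \int_{\R^m} G_t(z)\,|\langle \nu,z\rangle|^p\, dz.
\]
Since $G_t$ is spherically symmetric, an orthogonal change of variable turning $\nu$ into the first standard basis vector and Fubini's theorem reduce this to a one-dimensional integral
$\int_\R (4\pi t)^{-1/2} e^{-z_1^{2}/(4t)}|z_1|^{p}\, dz_1$.
The substitution $z_1=\sqrt{4t}\,u$ and the definition of the Gamma function give
\[
\frac{1}{t^{p/2}}\int_\bG p(g,e,t)\,|\langle \nu,z\rangle|^p\, dg
= \frac{2^{p}}{\sqrt{\pi}}\,\Gamma\!\left(\tfrac{p+1}{2}\right).
\]
To match the stated constant $\tfrac{2\Gamma(p)}{\Gamma(p/2)}$, the final step is to invoke the Legendre duplication formula $2^{p-1}\,\Gamma(p/2)\,\Gamma((p+1)/2)=\sqrt{\pi}\,\Gamma(p)$ used already by the authors in deriving \eqref{thesispPtk} from \eqref{thesisp}.

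For \eqref{punoint}, the same decoupling step applied to the measurable set $\{z:\langle z,\nu\rangle=0\}\times\R^{N-m}$ yields
\[
\sqrt{4\pi t}\int_{\{\langle z,\nu\rangle=0\}\times\R^{N-m}} p((z,\sigma),e,t)\, dz\, d\sigma
= \sqrt{4\pi t}\int_{\{\langle z,\nu\rangle=0\}} G_t(z)\, dz,
\]
where $dz$ on the hyperplane $\{\langle z,\nu\rangle=0\}\subset\R^m$ is the induced $(m-1)$-dimensional Lebesgue measure. On that hyperplane $\|z\|^2$ is just the Euclidean norm of the orthogonal projection, so the integral is a pure $(m-1)$-dimensional Gaussian, whose value is $(4\pi t)^{-1/2}$. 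Multiplying by $\sqrt{4\pi t}$ gives $1$.

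There is no real obstacle once Theorem \ref{T:int} is in hand; the only points that require a moment of care are the use of Fubini (which is legitimized by the Gaussian upper bound \eqref{gauss0}) and the interpretation of $dz$ in \eqref{punoint} as the surface measure on the hyperplane. The striking feature of the argument, which I would emphasize in a remark after the proof, is that the universal constants in \eqref{Inucostante} and \eqref{punoint} coincide with those of the classical Euclidean identities \eqref{led} and \eqref{hui}, precisely because Theorem \ref{T:int} shows that after integrating in $\sigma$ the full sub-Riemannian kernel is indistinguishable from the Euclidean heat kernel on the horizontal layer.
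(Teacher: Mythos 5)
Your proposal is correct and follows essentially the same route as the paper: apply Theorem \ref{T:int} with $(z',\sigma')=e$ to integrate out $\sigma$ and collapse the kernel onto the Gauss--Weierstrass kernel on $\R^m$, then invoke the rotational symmetry to reduce to the one-dimensional (resp.\ $(m-1)$-dimensional) Gaussian computations underlying \eqref{led} and \eqref{hui}. The only difference is that you carry out those elementary Gaussian integrals and the Legendre duplication step explicitly, whereas the paper simply cites the classical identities \eqref{led} and \eqref{hui}; both versions are fine.
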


\begin{proof}
The proofs of \eqref{Inucostante} and \eqref{punoint} follow immediately by an application of Theorem \ref{T:int} with $(z',\sigma')=e$. To see this, if we let $g = (z,\sigma)$, then by Tonelli's theorem and Theorem \ref{T:int} we have
\begin{align*}
& \frac{1}{t^{\frac p2}} \int_{\bG} p((z,\sigma),e,t) |\langle \nu,z\rangle|^p dzd\sigma = \frac{1}{t^{\frac p2}} \int_{\R^m} |\langle \nu,z\rangle|^p \left(\int_{\R^{N-m}} p((z,\sigma),e,t) d\sigma\right)dz\\
&=\frac{1}{t^{\frac p2}} \int_{\R^m}|\langle \nu,z\rangle|^p (4\pi t)^{-\frac m2} e^{-\frac{||z||^2}{4}} dz = \frac{2 \G(p)}{\G(p/2)},
\end{align*}
where in the last equality we have used \eqref{led}. In a similar fashion and using \eqref{hui}, we have 
\begin{align*}
&\int_{\{z\in\R^m\mid\left\langle z,\nu\right\rangle =0\}\times \R^{N-m}} p((z,\sigma),e,t)dzd\sigma\\
&=\int_{\{z\in\R^m\mid\left\langle z,\nu\right\rangle =0\}} (4\pi t)^{-\frac m2}e^{-\frac{||z||^2}{4t}} dz =\frac{1}{\sqrt{4\pi t}}.
\end{align*}\end{proof}
In connection with \eqref{punoint} it is worth remarking here that in the Heisenberg group $\mathbb H^1$ the vertical planes $\{z\in\R^2\mid\left\langle z,\nu\right\rangle =0\}\times \R$ are the only stable $H$-minimal entire graphs with empty characteristic locus, see \cite[Theor. 1.8]{DGNP}.

%%%%%%%%%%%%%%%%%%%%%%%%%%%%%%%%%%%%%

\section{Proof of Theorems \ref{T:mainp} and \ref{T:p1}}\label{S:proofs}

This section is devoted to proving Theorems \ref{T:mainp} and \ref{T:p1}. In order to establish Theorem \ref{T:mainp} we are going to show in the next two lemmas a ``limsup" and a ``liminf'' inequality in the spirit of the arguments in \cite[Sec. 2]{B}, where the case of a spherically symmetric approximate identity $\{\rho_\ve\}$ is treated. One should also see the generalisation to Carnot groups in \cite[Sec. 3]{Bar}. As the reader will see, in our situation the accomplishment of this task is possible thanks to the remarkable identity \eqref{Inucostante} in Lemma \ref{L:id} which ultimately hinges on Theorem \ref{T:int}.

\begin{lemma}\label{L:sup}
Let $f\in W^{1,p}(\bG)$ with $1\leq p<\infty$. Then
\begin{equation}\label{charlimsup}
\underset{t \to 0^+}{\limsup}\  t^{-\frac{p}{2}}\ \int_{\bG} P_t\left(|f - f(g)|^p\right)(g) dg\leq \frac{2 \G(p)}{\G(p/2)} \|\nabla_H f\|^p_p.
\end{equation}
\end{lemma}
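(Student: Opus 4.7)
The plan is to establish \eqref{charlimsup} first for test functions $f\in C^\infty_0(\bG)$, and then to lift the inequality to $W^{1,p}(\bG)$ by density. For the smooth case, the key reduction is to rewrite the quantity of interest in terms of increments along group translations: using Fubini, the left-invariance of the heat kernel (part (i) of Proposition \ref{P:prop}), the bi-invariance of Haar measure, and the symmetry $p(e,g'',t) = p(g'',e,t)$, I would show
\[
I(t) \overset{def}{=} \int_\bG P_t(|f-f(g)|^p)(g) dg = \int_\bG p(g'',e,t) \left(\int_\bG |f(g\circ g'') - f(g)|^p dg\right) dg''.
\]
This representation sets the stage for using the sub-Riemannian Ledoux--Huisken identity \eqref{Inucostante}.

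The next step is to Taylor expand $f(g\circ g'') - f(g)$ via Proposition \ref{P:taylor}: writing $g''=(z'',\sigma'')$ and noting that the horizontal component of $g^{-1}\circ(g\circ g'')$ is exactly $z''$, one obtains
\[
|f(g\circ g'')-f(g)-\langle \nh f(g),z''\rangle| \le C|g''|\, \omega_f(g,b|g''|),
\]
where $\omega_f(g,\rho)=\sup_{g'''\in B(g,\rho)}|\nh f(g''')-\nh f(g)|$. Using the elementary inequality $|a+b|^p\le (1+\eta)^{p-1}|a|^p+C_{\eta,p}|b|^p$ with $\eta>0$ to be sent to $0$, I would split $t^{-p/2}I(t)$ into a principal term
\[
M(t) \overset{def}{=} t^{-\frac{p}{2}}\int_\bG p(g'',e,t)\int_\bG |\langle \nh f(g),z''\rangle|^p dg\, dg''
\]
and a remainder $E(t)$ depending on $\omega_f$. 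For the principal term, Fubini and Lemma \ref{L:id} applied with $\nu=\nh f(g)/|\nh f(g)|$ (pointwise in $g$ where $\nh f(g)\neq 0$) give
\[
M(t) = \int_\bG |\nh f(g)|^p \left(\frac{1}{t^{\frac{p}{2}}}\int_\bG p(g'',e,t)|\langle\nu,z''\rangle|^p dg''\right)dg = \frac{2\Gamma(p)}{\Gamma(p/2)}\|\nh f\|_p^p.
\]
This is the heart of the argument, and is exactly where the integral decoupling property of the heat kernel (Theorem \ref{T:int}) enters decisively.

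The main obstacle will be proving $E(t)\to 0$ as $t\to 0^+$, since $\nh f$ is not constant and $\omega_f$ involves an oscillation that must be controlled uniformly. The strategy here is to perform the parabolic change of variable $g''=\delta_{\sqrt{t}}h$, which by part (ii) of Proposition \ref{P:prop} transforms
\[
E(t) \lesssim \int_\bG p(h,e,1)\,|h|^p \left(\int_\bG \omega_f(g,b\sqrt{t}|h|)^p dg\right) dh.
\]
For $f\in C^\infty_0(\bG)$, $\nh f$ is uniformly continuous and compactly supported, so $\omega_f(g,\rho)\to 0$ pointwise as $\rho\to 0$, is bounded by $2\|\nh f\|_\infty$, and is supported in $\operatorname{supp}(\nh f) + B(e,\rho)$. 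The Gaussian estimate \eqref{gauss0} combined with polynomial volume growth in $\bG$ provides an integrable majorant $p(h,e,1)|h|^p\,(2\|\nh f\|_\infty)^p\,|\operatorname{supp}(\nh f)+B(e,b|h|)|$, so dominated convergence yields $E(t)\to 0$. Passing $t\to 0^+$ and then $\eta\to 0^+$ gives \eqref{charlimsup} for $f\in C^\infty_0(\bG)$.

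Finally, for the density step, I would take $f_k\in C^\infty_0(\bG)$ with $f_k\to f$ in $W^{1,p}(\bG)$ (which is possible by \eqref{dense}). Viewing $(g,g')\mapsto |f(g')-f(g)|$ as an element of $L^p$ with respect to the measure $t^{-p/2}p(g,g',t)dg'dg$, Minkowski's inequality yields
\[
\left(t^{-\frac{p}{2}}I(t)\right)^{\frac{1}{p}} \le \left(t^{-\frac{p}{2}}\int_\bG P_t(|(f-f_k)-(f-f_k)(g)|^p)(g)dg\right)^{\frac{1}{p}}+\left(t^{-\frac{p}{2}}\int_\bG P_t(|f_k-f_k(g)|^p)(g)dg\right)^{\frac{1}{p}}.
\]
Applying Lemma \ref{L:diffquot} to the first term and the already-proved smooth case to the second, then taking $\limsup_{t\to 0^+}$ and sending $k\to\infty$ (using $\|\nh(f-f_k)\|_p\to 0$ and $\|\nh f_k\|_p\to\|\nh f\|_p$), I recover \eqref{charlimsup} in full generality.
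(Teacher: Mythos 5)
Your proposal is correct and follows essentially the same route as the paper: stratified Taylor expansion, the $(1+\eta)$-splitting into a principal term evaluated exactly via the Ledoux--Huisken identity \eqref{Inucostante}, a vanishing remainder controlled by the modulus of continuity of $\nh f$ and the Gaussian bounds, and a density argument via \eqref{dense} and Lemma \ref{L:diffquot}. The only cosmetic differences are that you handle the remainder by dominated convergence after the parabolic rescaling rather than an explicit near/far splitting, and your density step uses Minkowski's inequality with $p$-th roots, which is in fact the cleaner way to write the paper's final estimate when $p>1$.
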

\begin{proof}
We begin by proving  \eqref{charlimsup} for functions in $C^\infty_0(\bG)$.
For a fixed  $f\in C_0^\infty(\bG)$ we denote by $K=\overline \Om$, where $\Om$ is a bounded open set such that ${\rm{supp}}f\subset \Om$. Since $\left| (g')^{-1}\circ g\right|\ge \gamma>0$ for every $g\in \operatorname{supp} f$ and $g'\in \bG\setminus K$, the estimate \eqref{gauss0} in Proposition \ref{P:gaussian} easily implies
\begin{align*}
& \underset{t \to 0^+}{\lim}\  t^{-\frac{p}{2}}\ \int_{\bG\smallsetminus K}\int_{{\rm{supp}}f} p(g,g',t)|f(g') - f(g)|^p dg'dg
\\
&=\underset{t \to 0^+}{\lim}\  t^{-\frac{p}{2}}\ \int_{{\rm{supp}}f} \int_{\bG\smallsetminus K} p(g,g',t)|f(g') - f(g)|^p dg'dg=0.
\end{align*}
Since this gives
$$
\underset{t \to 0^+}{\limsup}\  t^{-\frac{p}{2}}\ \int_{\bG} P_t\left(|f - f(g)|^p\right)(g) dg=\underset{t \to 0^+}{\limsup}\  t^{-\frac{p}{2}}\ \int_{K}\int_{K} p(g,g',t)|f(g') - f(g)|^p dg'dg,
$$
we are thus left with proving that
\begin{equation}\label{claimlimsup}
\underset{t \to 0^+}{\limsup}\ t^{-\frac{p}{2}} \int_{K}\int_{K} p(g,g',t)|f(g') - f(g)|^p dg'dg \leq \frac{2 \G(p)}{\G(p/2)} \|\nabla_H f\|^p_p.
\end{equation}
From the uniform continuity and boundedness of $\nh f$, it is clear from Proposition \ref{P:taylor} that for every $g, g'\in K$ one has 
$$f(g') - f(g) = \langle\nh f(g),z'-z\rangle +   \omega(g,g')$$
where 
\begin{equation}\label{opiccolo}
|\omega(g,g')| = o(d(g,g'))\mbox{   as  }d(g,g')\to 0\,\,\mbox{ and }\,\, \frac{|\omega(g,g')|}{d(g,g')} \mbox{  is bounded uniformly in }K.
\end{equation}
Hence, for any $\theta>0$ there exists $c_\theta>0$ such that
$$
|f(g') - f(g)|^p\leq (1+\theta) \left| \left\langle \nabla_{H} f(g), z'-z \right\rangle \right|^p + c_\theta\, |\omega(g,g')|^p.
$$
This gives
\begin{align}\label{daqui}
&t^{-\frac{p}{2}}\int_{K}\int_{K} p(g,g',t)|f(g') - f(g)|^p dg'dg\\
&\leq (1+\theta) t^{-\frac{p}{2}}\int_{K}\int_{K} p(g,g',t)\left| \left\langle \nabla_{H} f(g), z'-z \right\rangle \right|^p dg'dg +\notag\\
&+ c_\theta\, t^{-\frac{p}{2}}\int_{K}\int_{K} p(g,g',t)  |\omega(g,g')|^pdg'dg = I(t) + II(t).\notag
\end{align}
Keeping in mind that by (i) in Proposition \ref{P:prop} we have $p(g,g',t) = p(g',g,t) = p(g^{-1}\circ g',e,t)$, if we now use the change of variable $g'' = g^{-1}\circ g'$, we obtain 
\begin{align*}
I(t) & =  (1+\theta)t^{-\frac{p}{2}}\int_{K}\int_{K} p(g,g',t)|\langle \nabla_{H} f(g), z'-z\rangle|^p dg' dg 
\\
& = (1+\theta)t^{-\frac{p}{2}}\int_{\bG}\int_{\bG} p(g^{-1}\circ g',e,t)|\langle \nabla_{H} f(g), z'-z\rangle|^p dg' dg 
\\
& =  (1+\theta)t^{-\frac{p}{2}}\int_{\{g\in \bG\,:\, |\nabla_{H} f(g)|\not=0\}}\int_{\bG} p(g'',e,t)|\langle \nabla_{H} f(g), z''\rangle|^p dg'' dg\\
&=  (1+\theta)\frac{2 \G(p)}{\G(p/2)} \int_{\bG}|\nabla_{H} f(g)|^p dg,
\end{align*}
where in the last equality we have used \eqref{Inucostante} in Lemma \ref{L:id}. On the other hand, if we now fix a sufficiently small $\delta>0$, we have from \eqref{opiccolo}
\begin{align*}
&II(t)  \le  c_\theta\, t^{-\frac{p}{2}}\int_{K}\int_{B(g,\delta)} p(g,g',t) |o(d(g,g'))|^p dg'dg +\\
&+ \bar{C} t^{-\frac{p}{2}}\int_{K}\int_{K\setminus B(g,\delta)} p(g,g',t) \left| (g')^{-1}\circ g\right|^p dg'dg\\
&\leq c_\theta\,|\omega(t)|\int_{K}\int_{\bG} p(e,g'',1)|g''|^p dg'' dg + \bar{C} \int_{K}\int_{\bG\smallsetminus B\left(e,\frac{\delta}{\sqrt{t}}\right)} p(e,g'',1)|g''|^p dg'' dg \\
& \leq c_\theta\,|\omega(t)| |K| C\int_{\bG} e^{-\beta |g''|^2}|g''|^p dg'' + \bar{C} |K| C e^{-\frac{\beta\delta^2}{2t}} \int_{\bG} e^{-\frac{1}{2}\beta |g''|^2}|g''|^p dg'',
\end{align*}
where $|\omega(t)|\to 0$ as $t\to 0^+$, $\bar{C}$ is a suitable positive constant, and in the relevant integrals we have used Propositions \ref{P:prop} and \ref{P:gaussian}. The analysis of $I(t)$ and $II(t)$ implies  
\[
I(t) =  (1+\theta) \frac{2 \G(p)}{\G(p/2)} \int_{\bG}|\nabla_{H} f(g)|^p dg,\ \ \ \ \ \ \underset{t \to 0^+}{\lim}\ II(t) =0.
\]
Substituting these relations in \eqref{daqui} we conclude that 
$$\underset{t \to 0^+}{\limsup}\ t^{-\frac{p}{2}} \int_{K}\int_{K} p(g,g',t)|f(g') - f(g)|^p dg'dg \leq (1+\theta) \frac{2 \G(p)}{\G(p/2)} \int_{\bG}|\nabla_{H} f(g)|^p dg.$$
The arbitrariness of $\theta>0$ leads to the validity of \eqref{claimlimsup} (and therefore of \eqref{charlimsup}) for $f\in C^\infty_0(\bG)$.  In order to complete the proof we are going to use the density property \eqref{dense}. Given $f\in W^{1,p}(\bG)$, there exists $f_k\in C_0^\infty$ such that $||f-f_k||_{W^{1,p}(\bG)} \to 0$. This gives
\begin{align*}
& \underset{t \to 0^+}{\limsup}\   t^{-\frac{p}{2}} \int_{\bG} P_t\left(|f - f(g)|^p\right)(g) dg
 \le \underset{t \to 0^+}{\limsup}\   t^{-\frac{p}{2}} \int_{\bG} P_t\left(|(f-f_k) - (f(g)-f_k(g)|^p\right)(g) dg
 \\
 & + \underset{t \to 0^+}{\limsup}\   t^{-\frac{p}{2}} \int_{\bG} P_t\left(|f_k - f_k(g)|^p\right)(g) dg
 \le C_p ||\nabla_H f_k - \nabla_H f||^p_{L^p(\bG)} + \frac{2 \G(p)}{\G(p/2)} ||\nabla_H f_k||^p_{L^p(\bG)},
\end{align*}
where in the last inequality we have applied \eqref{suppose} in Lemma \ref{L:diffquot} and \eqref{charlimsup}. Letting $k\to \infty$ we conclude that \eqref{charlimsup} does hold also for $f\in W^{1,p}(\bG)$. This completes the proof.

\end{proof}

The next lemma provides the converse implication of Lemma \ref{L:sup}, and it requires $p>1$.

\begin{lemma}\label{L:inf}
Let $p>1$ and $f\in L^p(\bG)$. Suppose that
$$\underset{t \to 0^+}{\liminf}\ t^{-\frac{p}{2}}\ \int_{\bG} P_t\left(|f - f(g)|^p\right)(g) dg <\infty.$$
Then $f\in W^{1,p}(\bG)$, and one has
\begin{equation}\label{charliminf}
\frac{2 \G(p)}{\G(p/2)} \|\nabla_H f\|^p_p\leq \underset{t \to 0^+}{\liminf}\  t^{-\frac{p}{2}}\ \int_{\bG} P_t\left(|f - f(g)|^p\right)(g) dg.
\end{equation}
\end{lemma}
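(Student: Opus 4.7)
The strategy is mollification plus Fatou's lemma (leveraging the pointwise limit produced by Lemma \ref{L:id}), combined with weak compactness in $L^p$; the reflexivity of $L^p$ for $p>1$ is precisely why the hypothesis $p>1$ enters.

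I would first regularize $f$ by left convolution with a smooth approximate identity. Fix $\rho\in C_0^\infty(\bG)$ non-negative with $\int_\bG \rho\,dg = 1$, set $\rho_\ve(g) = \ve^{-Q}\rho(\delta_{1/\ve}g)$, and define
\[
f_\ve(g) = \int_\bG \rho_\ve(h)\,f(h^{-1}g)\,dh.
\]
Then $f_\ve\to f$ in $L^p(\bG)$ as $\ve\to 0^+$ and $f_\ve\in C^\infty(\bG)$. Applying Jensen's inequality (to $|\cdot|^p$ against the probability density $\rho_\ve$) and then using the change of variable $(\tilde g,\tilde g')=(h^{-1}g,h^{-1}g')$, whose Jacobian is $1$ and which leaves the heat kernel invariant thanks to Proposition \ref{P:prop}(i), I would obtain the crucial monotonicity
\[
\int_\bG P_t(|f_\ve-f_\ve(g)|^p)(g)\,dg\;\leq\;\int_\bG P_t(|f-f(g)|^p)(g)\,dg\qquad\forall\, t>0.
\]

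Next, for each smooth $f_\ve$, arguing as in the proof of Lemma \ref{L:sup} via the Taylor estimate of Proposition \ref{P:taylor} together with the sub-Riemannian Ledoux identity \eqref{Inucostante} from Lemma \ref{L:id}, one obtains the pointwise convergence
\[
\lim_{t\to 0^+}t^{-p/2}\,P_t(|f_\ve-f_\ve(g)|^p)(g) \;=\; \frac{2\,\Gamma(p)}{\Gamma(p/2)}\,|\nabla_H f_\ve(g)|^p \qquad\forall\, g\in\bG.
\]
Combining Fatou's lemma with the monotonicity above and the standing hypothesis yields
\[
\frac{2\,\Gamma(p)}{\Gamma(p/2)}\,\|\nabla_H f_\ve\|_p^p \;\leq\; \liminf_{t\to 0^+} t^{-p/2}\!\int_\bG P_t(|f_\ve-f_\ve(g)|^p)(g)\,dg \;\leq\; \liminf_{t\to 0^+} t^{-p/2}\!\int_\bG P_t(|f-f(g)|^p)(g)\,dg,
\]
which both proves $\nabla_H f_\ve\in L^p(\bG,\R^m)$ and provides a bound on $\|\nabla_H f_\ve\|_p$ that is uniform in $\ve$.

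Finally, since $p>1$, the space $L^p(\bG,\R^m)$ is reflexive, so along some sequence $\ve_k\to 0^+$ one has $\nabla_H f_{\ve_k}\rightharpoonup v$ weakly in $L^p$. For any $\varphi\in C_0^\infty(\bG,\R^m)$, integration by parts combined with the $L^p$ convergence $f_{\ve_k}\to f$ gives
\[
\int_\bG v\cdot\varphi\,dg = \lim_{k}\int_\bG \nabla_H f_{\ve_k}\cdot\varphi\,dg = -\lim_k \int_\bG f_{\ve_k}\sum_{i=1}^m X_i\varphi_i\,dg = -\int_\bG f\sum_{i=1}^m X_i\varphi_i\,dg,
\]
which identifies $v$ with $\nabla_H f$ in the distributional sense. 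Thus $f\in W^{1,p}(\bG)$, and weak lower semicontinuity of the $L^p$ norm gives $\|\nabla_H f\|_p\leq \liminf_k \|\nabla_H f_{\ve_k}\|_p$; raising to the $p$-th power then yields \eqref{charliminf}. The most delicate ingredient is the monotonicity step: it forces one to convolve $f$ \emph{from the left}, precisely because this is the convention compatible with the left-invariance of the sub-Riemannian heat kernel.
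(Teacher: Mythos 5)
Your proposal is correct and follows essentially the same route as the paper: group mollification $f_\ve=\rho_\ve\star f$, the Jensen/left-invariance monotonicity (the paper's \eqref{unifeps}), a lower bound of the form \eqref{evvivaevviva} for the smooth mollifications via Proposition \ref{P:taylor} and \eqref{Inucostante}, and finally reflexivity of $L^p$ to extract a weak limit identified with $\nabla_H f$. The only (harmless) variation is in the middle step, where you obtain the full pointwise limit $t^{-p/2}P_t(|f_\ve-f_\ve(g)|^p)(g)\to \frac{2\G(p)}{\G(p/2)}|\nabla_H f_\ve(g)|^p$ and invoke Fatou along a minimizing sequence of $t$'s, whereas the paper avoids the pointwise claim by working on compact sets with the truncated identity \eqref{inpartgauss}; both are legitimate since $f_\ve$ is bounded with bounded, continuous horizontal gradient.
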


\begin{proof}
Fix a nonnegative function $\rho\in C_0^\infty(\bG)$ such that $\int_{\bG}\rho(g) dg =1$, and denote by
$$
\rho_\ve (g)=\ve^{-Q} \rho(\delta_{\ve^{-1}}(g)),
$$
the approximate identity associated with $\rho$. Let $1\le p<\infty$ (the reader should note that we are not excluding the case $p=1$ at this moment). For any $f\in L^p(\bG)$ we also let
$$
f_\ve (g)= \rho_\ve \star f(g) = \int_{\bG} \rho_\ve(g') f( (g')^{-1}\circ g) dg' = \int_{\bG} \rho_\ve(g\circ (g')^{-1}) f(g') dg'.
$$
It is well-known, see e.g. \cite[Prop. 1.20]{FS} and \cite[Theor. 2.3]{Ri}, that $f_\ve \in L^p(\bG)\cap C^{\infty}(\bG)$ and $f_\ve \to f$ in $L^p(\bG)$ as $\ve\to 0^+$. The following simple, yet critical, fact holds for every $\ve >0$ and $t>0$
\begin{equation}\label{unifeps}
{t}^{-\frac{p}{2}}\ \int_{\bG} P_{t}\left(|f_\ve - f_\ve(g)|^p\right)(g) dg \leq {t}^{-\frac{p}{2}}\ \int_{\bG} P_{t}\left(|f - f(g)|^p\right)(g) dg.
\end{equation}
To see \eqref{unifeps} note that H\"older inequality and (i) of Proposition \ref{P:prop} imply for any $t>0$
\begin{align*}
&\int_{\bG} P_t\left(|f_\ve - f_\ve(g)|^p\right)(g) dg= \int_{\bG}\int_{\bG} p(g,g',t) |f_\ve(g') - f_\ve(g)|^p dg'dg\\
&\leq \int_{\bG}\int_{\bG} p(g,g',t)\int_{\bG} \rho_\ve(g'') |f( (g'')^{-1}\circ g') - f( (g'')^{-1}\circ g)|^p dg'' dg'dg\\
&= \int_{\bG} \rho_\ve(g'')\left(\int_{\bG}\int_{\bG} p((g'')^{-1}\circ g,(g'')^{-1}\circ g',t)  |f( (g'')^{-1}\circ g') - f( (g'')^{-1}\circ g)|^p dg'dg\right)dg''\\
&= \int_{\bG} \rho_\ve(g'') \left(\int_{\bG} P_t\left(|f - f(g)|^p\right)(g) dg\right) dg'' = \int_{\bG} P_t\left(|f - f(g)|^p\right)(g) dg.
\end{align*}
We now claim that for every $1\le p < \infty$ and $\ve>0$ we have
\begin{equation}\label{evvivaevviva}
\frac{2 \G(p)}{\G(p/2)} \int_\bG |\nabla_{H} f_\ve(g)|^p dg \leq \underset{t \to 0^+}{\liminf}\ t^{-\frac{p}{2}}\ \int_{\bG} P_t\left(|f - f(g)|^p\right)(g) dg<\infty.
\end{equation}
The advantage now is that we can use smoothness and therefore argue similarly to the proof of Lemma \ref{L:sup}. We fix $\ve>0$ and let $K\subset \bG$ be a given compact set. From the continuity of $\nh f_\ve$ and Proposition \ref{P:taylor} we know that that for every $g\in K$
\[
\langle\nh f_\ve(g),z'-z\rangle = f_\ve(g') - f_\ve(g)  + \omega_\ve(g,g'),
\]
where $|\omega_\ve(g,g')| = o(d(g,g'))$ as $g'\to g$ uniformly for $g\in K$. As in Lemma \ref{L:sup}, for any $\theta>0$ there exists $c_\theta>0$ such that
\begin{equation}\label{unactheta}
|\langle \nabla_{H} f_\ve(g),z'-z\rangle|^p\leq (1+\theta)|f_\ve(g') - f_\ve(g)|^p + c_\theta |\omega_\ve(g,g')|^p.
\end{equation}
From \eqref{Inucostante} in Lemma \ref{L:id} we see that for any $0<\rho<\frac{2 \G(p)}{\G(p/2)}$ there exists $M=M(\rho)>0$ such that for any $\nu\in \mathbb S^{m-1}$ one has
\begin{equation}\label{inpartgauss}
\frac{2 \G(p)}{\G(p/2)}-\rho \leq \int_{B(e,M)} p(g'',e,1) |\langle \nu,z''\rangle|^p dg'',
\end{equation}
where the uniformity of the constant $M$ with respect to $\nu$ is ensured by \eqref{gauss0}. Assuming that $g\in K$ is a point at which $|\nabla_{H} f_\ve(g)|\not= 0$, applying \eqref{inpartgauss} with $\nu= \frac{\nabla_{H} f_\ve(g)}{|\nabla_{H} f_\ve(g)|}$, we find
\begin{align*}
& \left(\frac{2 \G(p)}{\G(p/2)}-\rho\right) |\nabla_{H} f_\ve(g)|^p \le \int_{B(e,M)} p(g'',e,1) |\langle \nabla_{H} f_\ve(g),z''\rangle|^p dg''.
\end{align*}
Since such inequality continues to be trivially true at points $g\in K$ where $|\nabla_{H} f_\ve(g)|= 0$, we can assume that it holds for every $g\in K$. Integration in $g\in K$, together with the change of variable $\zeta\mapsto g'=g\circ \delta_{\sqrt{t}}g''$ and the estimate \eqref{unactheta}, give
\begin{align*}
& \left(\frac{2 \G(p)}{\G(p/2)}-\rho\right) \int_K |\nabla_{H} f_\ve(g)|^p dg \le \int_K \int_{B(e,M)} p(g'',e,1) |\langle \nabla_{H} f_\ve(g),z''\rangle|^p dg'' dg.
\\
&=t^{-\frac{p}{2}}\ \int_K\int_{B(g,\sqrt{t}M)} p(g,g',t) |\langle \nabla_{H} f_\ve(g),z'-z\rangle|^p dg'dg\\
&\leq (1+\theta)t^{-\frac{p}{2}}\ \int_K\int_{B(g,\sqrt{t}M)} p(g,g',t)|f_\ve(g') - f_\ve(g)|^p dg'dg 
\\
& + c_\theta t^{-\frac{p}{2}}\ \int_K\int_{B(g,\sqrt{t}M)} p(g,g',t)|\omega_\ve(g,g')|^p dg'dg
 \\
&\leq (1+\theta) t^{-\frac{p}{2}} \int_{\bG} P_t\left(|f_\ve - f_\ve(g)|^p\right)(g) dg + c_\theta t^{-\frac{p}{2}}\ \int_K\int_{B(g,\sqrt{t}M)} p(g,g',t)|\omega_\ve(g,g')|^p dg'dg
\\
&\leq (1+\theta) t^{-\frac{p}{2}} \int_{\bG} P_t\left(|f - f(g)|^p\right)(g) dg + +c_\theta t^{-\frac{p}{2}}\ \int_K\int_{B(g,\sqrt{t}M)} p(g,g',t)|\omega_\ve(g,g'))|^p dg'dg,
\end{align*}
where in the last inequality we used the key property \eqref{unifeps}. Taking the $\underset{t \to 0^+}{\liminf}$ of both sides of the latter inequality, we find
\begin{align*}
& \left(\frac{2 \G(p)}{\G(p/2)}-\rho\right) \int_K |\nabla_{H} f_\ve(g)|^p dg\ \le\ (1+\theta)\ \underset{t \to 0^+}{\liminf}\ t^{-\frac{p}{2}} \int_{\bG} P_t\left(|f - f(g)|^p\right)(g) dg
\\
&   + c_\theta\ \underset{t \to 0^+}{\limsup}\ t^{-\frac{p}{2}}\ \int_K\int_{B(g,\sqrt{t}M)} p(g,g',t)|\omega_\ve(g,g'))|^p dg'dg.
\end{align*}
By \eqref{gauss0} in Proposition \ref{P:gaussian} and the property of $\omega_\ve$, it is now easy to infer that, for any compact $K$ and any fixed $M>0$, we have
$$
\underset{t \to 0^+}{\lim}\ t^{-\frac{p}{2}}\ \int_K\int_{B(g,\sqrt{t}M)} p(g,g',t)|\omega_\ve(g,g'))|^p dg'dg = 0.
$$
We have thus proved that
$$
\left(\frac{2 \G(p)}{\G(p/2)}-\rho\right) \int_K |\nabla_{H} f_\ve(g)|^p dg\ \le\ (1+\theta)\ \underset{t \to 0^+}{\liminf}\ t^{-\frac{p}{2}} \int_{\bG} P_t\left(|f - f(g)|^p\right)(g) dg.
$$
By the arbitrariness of $K, \theta, \rho$, we conclude that \eqref{evvivaevviva} does hold. For later use in the proof of Theorem \ref{T:p1} below, we reiterate at this point that  \eqref{evvivaevviva} is valid for any $1\le p<\infty$. 

We can now complete the proof of the lemma. By the theorem of Banach-Alaoglu (this is the only place where we are using $p>1$!) we know that  (up to subsequence) for every $i=1,...,m$, $X_i f_\ve$ is weakly-convergent as $\ve\to 0^+$ to a function $g_i\in L^p(\bG)$. On the other hand, since $f_\ve\to f$ in $L^p(\bG)$, for every $i=1,...,m$, we have that $X_i f_\ve$ converges to $X_i f$ in $\mathscr D'(\bG)$. This shows $X_i f=g_i$ as $L^p$-functions, thus proving that $f\in W^{1,p}(\bG)$. Once we know this crucial information, we use the fact that, from the definition of $f_\ve$, we have $X_i(f_\ve) = \rho_\ve\star(X_i f)= (X_i f)_\ve$ for $i=1,...,m$, and therefore $\nabla_{H}(f_\ve)=\left(\nabla_{H} f\right)_\ve\to \nabla_{H} f$ in $L^p(\bG)^m$ as $\ve\to 0^+$. This allows to infer from \eqref{evvivaevviva} that
$$
\frac{2 \G(p)}{\G(p/2)}\int_\bG |\nabla_{H} f(g)|^p dg \le \underset{t \to 0^+}{\liminf}\  t^{-\frac{p}{2}}\ \int_{\bG} P_t\left(|f - f(g)|^p\right)(g) dg,
$$
which finally proves \eqref{charliminf}.

\end{proof}

We are then in a position to provide the 
\begin{proof}[Proof of Theorem \ref{T:mainp}] 
It follows immediately by combining Lemmas \ref{L:sup} and \ref{L:inf}. 

\end{proof}

We next turn the attention to the 

\begin{proof}[Proof of Theorem \ref{T:p1}] 
We begin with proving that for $f\in L^1(\bG)$, we have
\begin{equation}\label{infuno}
\underset{t \to 0^+}{\liminf}\ \frac{1}{\sqrt t}\ \int_{\bG} P_t\left(|f - f(g)|\right)(g) dg<\infty\ \Longrightarrow\ f\in BV(\bG).
\end{equation}
Keeping in mind that \eqref{evvivaevviva} in the proof of Lemma \ref{L:inf} does hold also when $p=1$, we obtain from it for all $\ve>0$
$$
\frac{2}{\sqrt{\pi}} \int_\bG |\nabla_{H} f_\ve(g)| dg \leq \underset{t \to 0^+}{\liminf}\ \frac{1}{\sqrt t}\ \int_{\bG} P_t\left(|f - f(g)|\right)(g) dg<\infty,
$$
where as before $f_\ve=\rho_\ve\star f \in C^\infty(\bG)\cap L^1(\bG)$ is the family of group mollifiers. This shows in particular that $f_\ve \in W^{1,1}(\bG)$. Recalling definition \eqref{var}, if we now fix $\zeta = (\zeta_1,...,\zeta_m)\in \mathscr F$, we clearly have for all $\ve >0$
\begin{align}\label{evvivaevviva1}
&\int_{\bG} f_\ve (g) \sum_{i=1}^m X_i \zeta_i (g)  dg = - \int_{\bG} \left\langle \nh f_\ve (g), \zeta(g) \right\rangle  dg \\
&\leq \int_\bG |\nabla_{H} f_\ve(g)| dg \leq \frac{\sqrt{\pi}}{2}\underset{t \to 0^+}{\liminf}\ \frac{1}{\sqrt t}\ \int_{\bG} P_t\left(|f - f(g)|\right)(g) dg.\notag
\end{align}
Using the fact that $f_\ve \to f$ in $L^1(\bG)$ as $\ve \to 0^+$, together with \eqref{evvivaevviva1}, we obtain
\begin{align*}
& \int_{\bG} f(g) \sum_{i=1}^m X_i \zeta_i (g)  dg = \underset{\ve \to 0^+}{\lim} \int_{\bG} f_\ve (g) \sum_{i=1}^m X_i \zeta_i (g) dg
\\
&  \leq \frac{\sqrt{\pi}}{2}\underset{t \to 0^+}{\liminf}\ \frac{1}{\sqrt t}\ \int_{\bG} P_t\left(|f - f(g)|\right)(g) dg.
\end{align*}
The arbitrariness of $\zeta \in \mathscr F$ and \eqref{var} yield the validity of \eqref{infuno}, 
and we conclude that
\begin{equation}\label{infunosharp}
\frac{2}{\sqrt{\pi}} \operatorname{Var}_\bG(f) \leq \underset{t \to 0^+}{\liminf}\ \frac{1}{\sqrt t}\ \int_{\bG} P_t\left(|f - f(g)|\right)(g) dg\ .
\end{equation}
The opposite implication, 
\begin{equation}\label{supuno}
f\in BV(\bG)\ \Longrightarrow\ \underset{t \to 0^+}{\limsup}\ \frac{1}{\sqrt t}\ \int_{\bG} P_t\left(|f - f(g)|\right)(g) dg<\infty,
\end{equation}
is a trivial consequence of Proposition \ref{P:minchietto}. The two implications \eqref{infuno} and \eqref{supuno} prove \eqref{1uno}. If, in addition, $f\in W^{1,1}(\bG)$, then keeping in mind that $\operatorname{Var}_\bG(f) = \int_{\bG} |\nh f(g)| dg$, and by combining \eqref{infunosharp} with the case $p=1$ of Lemma \ref{L:sup}, we obtain
$$
\exists\underset{t \to 0^+}{\lim}\ \frac{1}{\sqrt t}\ \int_{\bG} P_t\left(|f - f(g)|\right)(g) dg = \frac{2}{\sqrt{\pi}} \int_\bG |\nabla_{H} f(g)| dg.
$$
This proves \eqref{2unouno}. 
We are left with the proof of \eqref{2uno}. We first apply Theorem \ref{T:bmpB}, that gives for  $f\in BV(\bG)$
\begin{align}\label{olliop}
&\underset{t \to 0^+}{\lim}\ \frac{1}{\sqrt t}\ \int_{\bG} P_t\left(|f - f(g)|\right)(g) dg =
4 \int_{\bG} \int_{T_\bG(\sigma_f(g))} p(g',e,1) dg' d \operatorname{Var}_\bG(f)(g)
\\
&= \int_{\bG} \int_{T_{\sigma_f(g)}\times \R^{N-m}} p(g',e,1) dg' d \operatorname{Var}_\bG(f)(g),
\notag
\end{align}
where in the last equality we have used \eqref{Tnu} with $\nu = \sigma_f(g)$. It is at this point that the crucial identity \eqref{punoint} in Lemma \ref{L:id} enters the stage. Using it we find that for $\operatorname{Var}_\bG(f)$-a.e. $g\in \bG$ one has
\begin{equation}\label{wow}
4 \int_{T_{\sigma_f(g)}\times \R^{N-m}} p(g',e,1) dg' = \frac{2}{\sqrt \pi}.
\end{equation}
Inserting \eqref{wow} in \eqref{olliop} we obtain \eqref{2uno}, thus completing the proof of the theorem.

\end{proof}

%%%%%%%%%%%%%%%%%%%%%%%%%%%%%%%%%%%%%%%%%%%%%%%%%%%%%%%%%

\section{Limiting behaviour of Besov seminorms}\label{S:seminorms}

In this section we prove Theorems \ref{T:bbmG} and \ref{T:MS}. As we have implicitly mentioned in the introduction, the gist of the proof of Theorem \ref{T:bbmG} is to connect the limit as $t\searrow 0^+$ of the heat semigroup in Theorem \ref{T:mainp} with that as $s\nearrow 1^-$ of the desingularised Besov seminorms $(1-s) \mathscr N_{s,p}(f)^p$. This connection is expressed by the following proposition (see also \cite[Section 3]{GTbbmd} for the case $p=1$).

\begin{proposition}\label{P:stars}
Let $1\leq p<\infty$ and $f\in L^p(\bG)$. One has
\begin{align}\label{chaininfsup}
&\frac{2}{p}\ \underset{t \to 0^+}{\liminf}\ t^{-\frac{p}{2}}\ \int_{\bG} P_t\left(|f - f(g)|^p\right)(g) dg \leq \underset{s\to 1^-}{\liminf}\ (1 - s)\ \mathscr N_{s,p}(f)^p  \leq \\
&\leq \underset{s\to 1^-}{\limsup}\ (1 - s)\ \mathscr N_{s,p}(f)^p   \leq \frac{2}{p}\ \underset{t \to 0^+}{\limsup}\ t^{-\frac{p}{2}}\ \int_{\bG} P_t\left(|f - f(g)|^p\right)(g) dg.\notag
\end{align}
\end{proposition}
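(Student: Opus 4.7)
The plan is to recognise the quantity $(1-s)\mathscr N_{s,p}(f)^p$ as a rescaled Mellin-type integral of the function
\[
\psi(t) \overset{def}{=} t^{-\frac{p}{2}} \int_{\bG} P_t\left(|f - f(g)|^p\right)(g) dg,\qquad t>0,
\]
and then to run a Tauberian-flavoured argument that converts the behaviour of $\psi(t)$ as $t\to 0^+$ into the behaviour of $(1-s)\mathscr N_{s,p}(f)^p$ as $s\to 1^-$. Concretely, setting $a=\frac{p(1-s)}{2}$ (so that $a\to 0^+$ as $s\to 1^-$), by the very definition of $\mathscr N_{s,p}(f)^p$ one finds
\[
(1-s)\,\mathscr N_{s,p}(f)^p \;=\; \frac{2a}{p}\int_0^\infty t^{a-1}\,\psi(t)\,dt,
\]
so the whole proposition reduces to the assertion
\[
\liminf_{t\to 0^+}\psi(t)\;\le\;\liminf_{a\to 0^+}\,a\int_0^\infty t^{a-1}\psi(t)\,dt\;\le\;\limsup_{a\to 0^+}\,a\int_0^\infty t^{a-1}\psi(t)\,dt\;\le\;\limsup_{t\to 0^+}\psi(t).
\]

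First I would split the Mellin integral at an arbitrary fixed threshold $T>0$ and treat the tail. Using the elementary bound $\phi(t):=\int_\bG P_t(|f-f(g)|^p)(g)dg \le 2^p \|f\|_p^p$ (which follows from $|a-b|^p\leq 2^{p-1}(a^p+b^p)$ and the stochastic completeness $P_t 1=P_t^\star 1=1$, exactly as in \eqref{Ndopo1}), one gets $\psi(t)\le 2^p\|f\|_p^p\,t^{-p/2}$; hence for $0<a<p/2$,
\[
a\int_T^\infty t^{a-1}\psi(t)\,dt\;\le\;2^p\|f\|_p^p\,\frac{a\,T^{a-p/2}}{p/2 - a}\;\longrightarrow\;0\qquad\text{as }a\to 0^+.
\]
So the tail plays no role in either $\liminf$ or $\limsup$ as $s\to 1^-$.

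The heart of the argument is the head $a\int_0^T t^{a-1}\psi(t)\,dt$, where the key identity is $a\int_0^T t^{a-1}\,dt = T^a\to 1$ as $a\to 0^+$. Given any $\varepsilon>0$, pick $\delta\in(0,T)$ so small that $\psi(t)\le\Big(\limsup_{\tau\to 0^+}\psi(\tau)\Big)+\varepsilon$ on $(0,\delta)$. On $[\delta,T]$ the function $\psi$ is bounded by $2^p\|f\|_p^p\,\delta^{-p/2}$, and since $a(T^a-\delta^a)\to 0$ the contribution of $[\delta,T]$ vanishes in the limit. From $\psi\ge 0$ and
\[
a\int_0^\delta t^{a-1}\psi(t)\,dt\;\le\;\Big(\limsup_{\tau\to 0^+}\psi(\tau)+\varepsilon\Big)\,\delta^a\;\longrightarrow\;\limsup_{\tau\to 0^+}\psi(\tau)+\varepsilon
\]
and the arbitrariness of $\varepsilon$, one obtains the upper estimate. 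The lower estimate is symmetric: for any $\varepsilon>0$ select $\delta>0$ with $\psi(t)\ge \liminf_{\tau\to 0^+}\psi(\tau)-\varepsilon$ on $(0,\delta)$, and again use $a\int_0^\delta t^{a-1}\,dt=\delta^a\to 1$ and non-negativity of $\psi$ outside $(0,\delta)$.

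Combining these three steps and restoring the factor $\frac{2}{p}$ from $1-s = \frac{2a}{p}$ gives the full chain of inequalities in \eqref{chaininfsup}. I do not anticipate any serious obstacle: the argument is a soft real-variable one, and the only genuine ingredient beyond calculus is the uniform control $\phi(t)\le 2^p\|f\|_p^p$. The most delicate point to state cleanly is the choice of the threshold $\delta$ for the head and the verification that the bound on $[\delta,T]$ becomes negligible, which has to be done uniformly in $a$ small enough.
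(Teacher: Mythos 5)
Your proposal is correct and follows essentially the same route as the paper: both split the $t$-integral at a small threshold, kill the tail with the uniform bound $\int_{\bG} P_t(|f-f(g)|^p)(g)\,dg \le 2^p\|f\|_p^p$ coming from $P_t1=P_t^\star 1=1$, and control the head by the sup/inf of $t^{-p/2}\int_{\bG}P_t(|f-f(g)|^p)(g)\,dg$ near $t=0$ together with $a\int_0^\delta t^{a-1}dt=\delta^a\to 1$. Your Mellin-average reformulation and the three-piece split are only cosmetic variants of the paper's two-piece argument (just note explicitly that the upper bound is trivial when the $\limsup$ is $+\infty$, so that the choice of $\delta$ is legitimate).
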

\begin{proof}
We start by proving
\begin{equation}\label{ve2}
\underset{s\to 1^-}{\limsup}\ (1 - s)\ \mathscr N_{s,p}(f)^p   \leq \frac{2}{p}\ \underset{t \to 0^+}{\limsup}\ t^{-\frac{p}{2}}\ \int_{\bG} P_t\left(|f - f(g)|^p\right)(g) dg,
\end{equation}  
and we denote 
$$L \overset{def}{=}  \underset{t \to 0^+}{\limsup}\ t^{-\frac{p}{2}}\  \int_{\bG} P_t\left(|f - f(g)|^p\right)(g) dg.$$  
If $L=+\infty$, then \eqref{ve2} is trivially valid. Therefore, we might as well assume that $L<\infty$. By \eqref{inbesov} we already know that $f$ belongs to the fractional Sobolev space $\Bps$ for every $0<s<1$. Fix $\ve_0>0$ such that 
\[
\underset{\tau\in (0,\ve_0)}{\sup}\ \tau^{-\frac{p}{2}}\ \int_{\bG} P_\tau\left(|f - f(g)|^p\right)(g) dg <\infty
\]
and consider any $\ve\in(0,\ve_0)$. Hence we can write
\begin{align*}
\mathscr N_{s,p}(f)^p & = \int_0^\ve \frac{1}{t^{\frac{s p}2 +1}} \int_{\bG} P_t\left(|f - f(g)|^p\right)(g) dg dt +  \int_\ve^\infty \frac{1}{t^{\frac{s p}2 +1}} \int_{\bG} P_t\left(|f - f(g)|^p\right)(g) dg dt.
\end{align*}
On one hand, by \eqref{Ndopo1} we know
\[
 \int_\ve^\infty \frac{1}{t^{\frac{s p}2 +1}} \int_{\bG} P_t\left(|f - f(g)|^p\right)(g) dg dt \leq \frac{2^{p+1}}{sp} \|f\|^p_p \ve^{-\frac{sp}{2}}.
\] 
On the other hand, one has
\[
\int_0^\ve \frac{1}{t^{\frac{s p}2 +1}} \int_{\bG} P_t\left(|f - f(g)|^p\right)(g) dg dt \leq \left(\underset{\tau\in (0,\ve)}{\sup}\ \tau^{-\frac{p}{2}}\ \int_{\bG} P_\tau\left(|f - f(g)|^p\right)(g) dg \right) \frac{2\ve^{\frac{p}{2}(1-s)}}{p(1 - s)}.
\]
We thus infer that for every $\ve\in (0,\ve_0)$ we have
\begin{equation}\label{ve}
\mathscr N_{s,p}(f)^p \leq \frac{2\ve^{\frac{p}{2}(1-s)}}{p(1 - s)} \left(\underset{\tau\in (0,\ve)}{\sup}\ \tau^{-\frac{p}{2}}\ \int_{\bG} P_\tau\left(|f - f(g)|^p\right)(g) dg \right) + \frac{2^{p+1}}{sp} \|f\|^p_p \ve^{-\frac{sp}{2}}.
\end{equation}
Multiplying by $(1 - s)$ in \eqref{ve} and taking the $\underset{s \to 1^-}{\limsup}$, we find for any $\ve\in (0,\ve_0)$,
\begin{equation}\label{lim12}
\underset{s\to 1^-}{\limsup}\, (1 - s) \mathscr N_{s,p}(f)^p  \le \frac{2}{p} \underset{\tau\in (0,\ve)}{\sup}\ \tau^{-\frac{p}{2}}\ \int_{\bG} P_\tau\left(|f - f(g)|^p\right)(g) dg.
\end{equation}
Passing to the limit as $\ve\to 0^+$ in \eqref{lim12}, we reach the desired conclusion \eqref{ve2}.\\
We are left with the proof of
\begin{equation}\label{ve3}
\underset{s\to 1^-}{\liminf}\ (1 - s)\ \mathscr N_{s,p}(f)^p   \geq  \frac{2}{p}\ \underset{t \to 0^+}{\liminf}\ t^{-\frac{p}{2}}\ \int_{\bG} P_t\left(|f - f(g)|^p\right)(g) dg.
\end{equation}
To do this, for every $0<s<1$ and any $\ve>0$, we write
\begin{align*}
& (1 - s)\ \mathscr N_{s,p}(f)^p \geq (1-s) \int_0^\ve \frac{1}{t^{\frac{s p}2 +1}} \int_{\bG} P_t\left(|f - f(g)|^p\right)(g) dg dt
\\
& \ge (1 -s) \left(\underset{0<\tau<\ve}{\inf}\  \tau^{-\frac{p}{2}} \int_{\bG} P_\tau\left(|f - f(g)|^p\right)(g) dg \right) \int_0^\ve t^{\frac{p}{2}(1-s)-1} dt
\\
& =  \frac{2}{p}\left(\underset{0<\tau<\ve}{\inf}\  \tau^{-\frac{p}{2}} \int_{\bG} P_\tau\left(|f - f(g)|^p\right)(g) dg \right)\ \ve^{\frac{p}{2}(1-s)}. 
\end{align*}
Taking the $\underset{s \to 1^-}{\liminf}$ in the latter inequality, yields
\[
\underset{s\nearrow 1}{\liminf}\, (1-s) \mathscr N_{s,p}(f)^p \geq \underset{0<\tau<\ve}{\inf}\  \frac{2}{p} \tau^{-\frac{p}{2}} \int_{\bG} P_\tau\left(|f - f(g)|^p\right)(g) dg.
\]
If we now take the limit as $\ve\to 0^+$, we reach the desired inequality \eqref{ve3}.
\end{proof}

We are then ready to provide the 

\begin{proof}[Proof of Theorem \ref{T:bbmG}]
The characterisations \eqref{1sp} and, respectively, \eqref{1suno} follow easily from \eqref{charlimsup}, \eqref{charliminf}, and \eqref{chaininfsup} in case $p>1$ and, respectively, from \eqref{supuno}, \eqref{infuno}, and \eqref{chaininfsup} if $p=1$. Moreover, for $f\in W^{1,p}(\bG)$, the limiting behaviour \eqref{2sp} is a trivial consequence of \eqref{2p}, \eqref{2unouno}, and \eqref{chaininfsup}.\\
Finally, if $\bG$ has the property (B) and $f\in BV(\bG)$, the limiting behaviour \eqref{2suno} is a consequence of \eqref{2uno} and \eqref{chaininfsup}.
\end{proof}

We finally close the paper with the

\begin{proof}[Proof of Theorem \ref{T:MS}]
We observe that the heat semigroup $P_t$ satisfies the following three properties:
\begin{itemize}
\item[(a)] $P_t 1=P^*_t1=1$ for all $t>0$ (which is a consequence of (iii) in Proposition \ref{P:prop} and of the symmetry of the heat kernel), and thus in particular
$$
\|P_t f\|_q\leq \|f\|_q \quad \forall\, f\in L^q,\, t>0,\mbox{ and }1\leq q\leq \infty; 
$$
\item[(b)] (ultracontractivity) for every $1<q\leq \infty$ there exists a constant $C_q$ such that
$$
\|P_t f\|_q\leq \frac{C_q}{t^{\frac{Q}{2} \left(1-\frac{1}{q}\right)}} \|f\|_1 \quad \forall\,f\in C_0^\infty\mbox{ and } t>0,
$$
(this is a consequence of Minkowski's integral inequality and the upper Gaussian estimate in \eqref{gauss0});
\item[(c)] the density property in Lemma \ref{L:dens} and the estimate \eqref{besovinbesov} of the embedding $\Bps \subset\mathfrak B_{\sigma,p}(\bG)$.
\end{itemize}
We emphasise that property (a) implies for the spaces $\Bps$ the same asymptotic behaviour as $s\to 0^+$ of the case ${\rm tr} B=0$ of the H\"ormander semigroup treated in \cite[Theorem 1.1]{BGT}.
With properties (a)-(c) we can now follow verbatim the semigroup approach in \cite{BGT} to reach the desired conclusion.

\end{proof}

%%%%%%%%%%%%%%%%%%%%%%%%%%%%%%%%%%%%%%%%%%%%%%%%%%%%%%%%%%%%%%%%%%%%%%

\bibliographystyle{amsplain}

\end{document}